\newtheorem{theorem}{Theorem}[section]
\newtheorem{lemma}[theorem]{Lemma}
\newtheorem{proposition}[theorem]{Proposition}
\newtheorem{assumption}[theorem]{Assumption}
\newtheorem{remark}[theorem]{Remark}
\numberwithin{equation}{section}
\newcommand{\be}{\begin{equation}}
\newcommand{\ee}{\end{equation}}
\newcommand{\bes}{\begin{equation*}}
\newcommand{\ees}{\end{equation*}}
\def\E{\bE}
\def\cG{\mathcal{G}}
\newcommand{\sG}{\mathcal{G}}
\def\bE{\mathbb{E}}
\newcommand{\R}{\mathbb{R}}
\newcommand{\1}{\boldsymbol{1}}
\renewcommand{\d}{{\rm d}}
\renewcommand{\geq}{\geqslant}
\renewcommand{\leq}{\leqslant}
\renewcommand{\ge}{\geqslant}
\renewcommand{\le}{\leqslant}
\newcommand{\rd}{{\mathbb R^d}}
\def\m1{\mathbf{1}}
\title{Blow-up  results for space-time fractional stochastic partial differential equations}
\author{Sunday A. Asogwa\\ Auburn University
\and Jebessa B. Mijena\\Georgia College \& State University
\and Erkan Nane\\ Auburn University
 }
 \date{}
\begin{document}
\maketitle

\begin{abstract}
Consider non-linear time-fractional  stochastic reaction-diffusion  equations of the following type,
$$\partial^\beta_tu_t(x)=-\nu(-\Delta)^{\alpha/2} u_t(x)+I^{1-\beta}[b(u)+ \sigma(u)\stackrel{\cdot}{F}(t,x)]$$ in $(d+1)$ dimensions, where $\nu>0, \beta\in (0,1)$, $\alpha\in (0,2]$. The operator  $\partial^\beta_t$ is the Caputo fractional derivative while $-(-\Delta)^{\alpha/2} $ is the generator of an isotropic $\alpha$-stable L\'evy process and $I^{1-\beta}$ is the Riesz fractional integral operator. The forcing noise denoted by $\stackrel{\cdot}{F}(t,x)$ is a  Gaussian noise. These equations might be used as a model for materials with random thermal memory. We derive  non-existence (blow-up) of global random field solutions  under some additional conditions, most notably on  $b$, $\sigma$ and the initial condition. Our results complement those of P. Chow in \cite{chow2}, \cite{chow1}, and Foondun et al. in  \cite{Foondun-liu-nane}, \cite{foondun-parshad}  among others.
\end{abstract}
 {\bf Keywords:} space-time fractional stochastic partial differential equations, space-time white noise, space colored noise, finite time blow-up.

\section{Introduction and main results}
In the last two decades, many researchers have developed  interest in the study of  time fractional diffusion equations due to their immense applications in many applied and theoretical fields of science and engineering. A typical form of the time fractional diffusion equation is $\partial^\beta_tu=\nu \Delta u$  with $\beta\in (0,1)$ where $\Delta u$ denotes the Laplacian of $u$ and   $\partial^\beta_tu$ denotes the Caputo fractional derivative of $u$ defined by the expression
\begin{equation}\label{CaputoDef}
\partial^\beta_t u_t(x)=\frac{1}{\Gamma(1-\beta)}\int_0^t \frac{\partial
u_r(x)}{\partial r}\frac{dr}{(t-r)^\beta}.
\end{equation}

These time fractional equations are related with anomalous diffusions or diffusions in non-homogeneous media, with random fractal structures; see, for instance, \cite{mnx2013}. 

{
We can study the natural extension of the time-fractional diffusion equation
 \begin{equation}\label{tfspde-0}
 \partial^\beta_tu_t(x)=\Delta u_t(x)+\stackrel{\cdot}{W}(t,x);\ \  u_t(x)|_{t=0}=u_0(x),
 \end{equation}
 where $\stackrel{\cdot}{W}(t,x)$ is a   space-time white noise with $x\in \rd$.

The physically "correct" form of \eqref{tfspde-0} can be obtained using
{\bf time fractional Duhamel's principle} \cite{umarov-12}  as follows. Consider the time-fractional PDE with force term $f(t,x)$
 \begin{equation}\label{tfpde}
 \partial^\beta_tu_t(x)=\Delta  u_t(x)+f(t,x);\ \  u_t(x)|_{t=0}=u_0(x),
 \end{equation}
 whose solution is given by Duhamel's principle. The role of the external force $f(t,x)$ to the output can be seen as
  \begin{equation}
 \partial^\beta_tV(r, t,x)=\Delta V(r, t,x);\ \  V(r,r,x)= \partial^{1-\beta}_t f (t, x)|_{t=r},
 \end{equation}
  with solution
 $$
 V(t,r, x)=\int_{\rd}G_{t-r}(x-y) \partial^{1-\beta}_r f (r, y)dy,
 $$
where  $G_t(x)$ is the fundamental solution of $\partial_t^\beta u=\Delta  u$.
Thus \eqref{tfpde} has solution

$$u(t,x)=\int_\rd G_{t}(x-y)u_0(y)dy+\int_0^t\int_\rd G_{t-r}(x-y)\partial^{1-\beta}_r f(r,y)dydr.$$

 Now we will write the mild (integral) solution of \eqref{tfspde-0} using time fractional Duhamel's principle as the form (informally):
 \begin{equation}\label{mild-sol-der-noise}\begin{split}
u(t,x)&=\int_\rd G_t(x-y)u_0(y)dy\\
\ \ \ \ &+\int_0^t\int_\rd G_{t-r}(x-y) \partial^{1-\beta}_r[ \stackrel{\cdot}{W}(r,y)]dydr.
\end{split}\end{equation}

For $\gamma>0$, we define the Riesz fractional integral by
$$I^{\gamma} f(t):=\frac{1}{\Gamma(\gamma)} \int _0^t(t-\tau)^{\gamma-1}f(\tau)d\tau.$$
The Caputo fractional derivative  $\partial^\beta_t$ is the left inverse of  Riesz fractional integral $I^{\beta}$. That means, for every  $\beta\in (0,1)$, and   $h\in L^\infty(\R_+)$ or $h\in C(\R_+)$
$$ \partial _t^\beta I^\beta  h(t)=h(t).$$
The fractional derivative of the noise term  in \eqref{mild-sol-der-noise}  can now be removed as follows. Consider the time fractional PDE with a force given by $f(t,x)=I^{1-\beta}h(t,x)$, then by the time fractional  Duhamel's principle the  mild solution to
\eqref{tfpde} will be given by

\begin{equation*}
\begin{split}
u_t(x)&=\int_{\rd} G_t(x-y)u_0(y)dy+\int_0^t\int_{\rd} G_{t-r}(x-y)  \partial^{1-\beta}_r  I^{1-\beta}h(r,y)dydr.\\
&=\int_{\rd} G_t(x-y)u_0(y)dy+\int_0^t\int_{\rd} G_{t-r}(x-y) h(r,y)dydr.
\end{split}
\end{equation*}

Thus, the above discussion suggest  that the ``correct'' time fractional stochastic PDE is the
 following model problem:
 \begin{equation}\label{tfspde-1}
 \partial^\beta_tu_t(x)=\Delta u_t(x)+I^{1-\beta}[\stackrel{\cdot}{W}(t,x)];\ \  u_t(x)|_{t=0}=u_0(x).
 \end{equation}
{Using the Walsh  isometry the above fractional integral equation \eqref{tfspde-1}  is defined as
 $$\int_{\R^d} \phi(x)I^{1-\beta}[\stackrel{\cdot}{W}(t,x)]dx= \frac{1}{\Gamma(1-\beta)} \int_{\R^d}\int _0^t(t-\tau)^{-\beta}\phi(x) W(d\tau, dx),$$ for $\phi \in L^2(\R^d)$ . }

 By the Duhamel's principle, mentioned above,    mild (integral) solution of  \eqref{tfspde-1} will be  (informally):
\begin{equation}\label{mild-sol-tfspde}\begin{split}
u_t(x)&=\int_\rd G_t(x-y)u_0(y)dy+\int_0^t\int_{\R^d} G_{t-r}(x-y)W(dydr).
\end{split}
\end{equation}

{ Now we would like to give a {\bf Physical motivation} to study such time fractional stochastic PDEs which is adapted from \cite{chen-kim-kim-2014}.
The  type of fractional stochastic PDEs studied in the current paper can be used to model heat equation in a material with thermal memory.
If we let $u_t(x), k(t,x)$ and $\stackrel{\to}{H}(t,x)$ represent the body temperature, internal energy and flux density, respectively. Then using the following relations
\begin{equation}\begin{split}
\partial_tk(t,x)&=-div \stackrel{\to}{H}(t,x),\\
k(t,x)=\beta u_t(x), \ \ \stackrel{\to}{H}(t,x)&=-\lambda\nabla u_t(x),
\end{split}\end{equation}
we get the classical heat equation $\beta\partial_tu=\lambda\Delta u$.

According to the law of classical heat equation the speed of heat flow is infinite,
but since the heat flow can be disrupted by the response of the material the propagation speed can be finite.
In a material with thermal memory we might have
$$k(t,x)=\bar{\beta}u_t(x)+\int_0^tn(t-s)u_s(x)ds,$$
for some appropriate constant $\bar{\beta}$ and kernel $n$.  In most cases we would have $n(t)=\Gamma(1-\beta_1)^{-1}t^{-\beta_1}$ for $\beta_1\in (0,1)$.
The convolution gives the fact  that the nearer past affects the present more.
If in addition the internal energy also depends on past random effects, then
\begin{equation}\label{phys-1}
\begin{split}
k(t,x)&=\bar{\beta}u_t(x)+\int_0^tn(t-s)u_s(x)ds\\
\ \ \ &+\int_0^t l(t-s)h(s, u_s(x)) {W}(ds, x),
\end{split}\end{equation}
where $W$ is  ``white noise'' modeling the random effects.
Take $l(t)=\Gamma(2-\beta_2)^{-1}t^{1-\beta_2}$ for $\beta_2\in (0,1)$,
then after differentiation \eqref{phys-1} gives
\begin{equation}\label{tf-eqn}
\partial_t^{\beta_1}u=div\stackrel{\to}{F}+ \frac{1}{\Gamma(2-\beta_2)} \frac{\partial}{\partial t}\int _0^t(t-s)^{1-\beta_2}h(s,u_s(x)) W(ds,x).
\end{equation}
}
A  version of equation \eqref{tf-eqn} was studied recently by L. Chen and his co-authors: see, for example, \cite{le-chen-2019}

}
Mijena and Nane \cite{nane-mijena-2014} proposed to study a  class of space-time fractional stochastic heat type  equation as a physical model for heat in a material with random thermal memory.  In the current paper we consider the following related  space-time fractional stochastic reaction-diffusion type equations in $(d+1)$ dimension
\begin{equation}\label{intro:eq}
\partial^\beta_tu_t(x)=-\nu(-\Delta)^{\alpha/2} u_t(x)+I^{1-\beta}[b(u)+ \sigma(u)\stackrel{\cdot}{F}(t,x)]\ \ \
 t>0 \quad\text{and}\quad x\in \R^d,
\end{equation}
where $\nu>0, \beta\in (0,1)$, $\alpha\in (0,2]$. The operator  $\partial^\beta_t$ is the Caputo fractional derivative while $-(-\Delta)^{\alpha/2} $ denotes the fractional Laplacian, the generator of a $\alpha$-stable L\'evy process and $I^{1-\beta}$ is the Riesz fractional integral operator. The forcing noise denoted by $\stackrel{\cdot}{F}(t,x)$ is a  Gaussian noise and will be taken to be white in time and possibly colored in space. The initial condition will always be assumed to be a non-negative bounded deterministic function.  The functions  $\sigma$  and $b$ are  locally Lipschitz functions.

{
Taking the fractional integral of the noise term  in equation \eqref{intro:eq} is not  merely to get a simple integral solution. It is due the following important physical reason:
Taking the fractional derivative of order $1-\beta$ of both sides of the equation \eqref{intro:eq} gives the forcing function, in the traditional units $x/t$: see, for example, Meerschaert et al  \cite{Meerschaert-2016}. In this paper the authors work on a deterministic time fractional equation with an external force, but the same physical principle should apply for the stochastic equations too.

Recently a numerical approximation of solutions to space-time fractional stochastic equations was established in \cite{Julia-et-al-2017}. Versions of equation \eqref{intro:eq} with or without the fractional integral of the noise term was the subject of some papers recently: see, for example, \cite{allouba-spde1, allouba-spde2, chen-2016,chen-kim-kim-2014, chen-2016, le-chen-2019,cui-yan-11}.
}

{
Using the time fractional Duhamel's principle, as mentioned above, a mild solution to \eqref{intro:eq}  in the sense of Walsh \cite{walsh} is any $u$ which is adapted to the filtration generated by the Gaussian noise and satisfies the following evolution equation}
\begin{equation}\label{mild}
u_t(x)=
(\cG u)_t(x)+ \int_{\R^d}\int_0^t G_{t-s}(x-y)b(u_s(y))\d s\,\d y+\int_{\R^d}\int_0^t G_{t-s}(x-y)\sigma(u_s(y))F(\d s\,\d y),
\end{equation}
where
\begin{equation}\label{deter}
(\cG u)_t(x):=\int_{\R^d} G_t(x-y)u_0(y)\,\d y,
\end{equation}
and $G_t(\cdot)$ denotes the density of the time changed process $X_{E_t}$. More explanation about this  process is given in Section 2.

 The existence and uniqueness of the solution to \eqref{intro:eq} with the space-time white noise when  $d<(2\wedge \beta^{-1})\alpha$ has been studied by Mijena and Nane \cite{nane-mijena-2014} under global Lipschitz condition on $\sigma$, using the white noise approach of Walsh \cite{walsh}.  Foondun and Nane \cite{Foondun-nane} and  Foondun et al. \cite{Foondun-mijena-nane} established existence of solutions of space-time fractional equations with space colored noise. Asogwa and Nane \cite{Asogwa-Nane2016}, Mijena and Nane \cite{mijena-nane-2}, show that if $\sigma$ is globally Lipschitz, then for every non-negative measurable bounded initial function with non-empty compact support, solution to \eqref{intro:eq} is defined for all time and the distances to the origin of the farthest high peaks of absolute moments of solutions grow exactly linearly with time. See \cite{Asogwa-Nane2016,mijena-nane-2} for more details.  In this paper, we will be concerned with the moments of the solution.

If we further have
\begin{equation}\label{moments}
\sup_{x\in \R^d}\sup_{t\in[0,\,T]} \E|u_t(x)|^k<\infty \quad\text{for all}\quad L>0 \quad\text{and}\quad k\in[2,\,\infty],
\end{equation}
then we say that $u$ is a {\it random field} solution on $[0,T]$. Usually, existence is proved under the assumption that $\sigma$ is globally Lipschitz. But this can be proved under the local Lipschitz condition as well. We can see this by defining
\begin{equation*}
\tau_N:= \inf \{t > 0, \sup_{x\in \R^d}|u_t(x)| > N\},
\end{equation*}
then  we have $|\sigma(u_s(x))-\sigma(u_s(y))|\leq K_N |u_s(x)-u_s(y)|$ for any $s \leq \min(T ,\tau_N)$, where $K_N$ is a constant dependent on $N$. 
Following the techniques in \cite{Davar}, \cite{nane-mijena-2014} and \cite{walsh}, we can prove existence and uniqueness of a local solution in $(0, \min(T,\tau_N))$ provided that $0<\alpha<2$ and $d<(2\wedge \beta^{-1})\alpha$; two conditions which will be in force whenever we are dealing with the above equation.

When \eqref{intro:eq} has a solution $u_t(x)$ which is defined on $\mathbb{R}^{d}\times(0 , T)$ for {every} $T>0$, we say that the solution is global. The main aim of this paper is to show that under some additional conditions on the initial condition and the functions $\sigma$  and $b$, \eqref{intro:eq} cannot have global random field solutions. The failure of global solutions usually manifests itself via the `blow up' of certain quantities involving the solution.

The study of blow-up or non-existence of solutions has attracted a number of researches, because they are very useful to applied researchers. In this regard, Mueller and Sowers in \cite{Mueller2000,MuellerSowers1993} prove that the space-time white noise driven stochastic heat equation with  Dirichlet boundary condition will blow up in finite time with positive probability, if $\sigma(u) = u^{\gamma}$ with $\gamma >3/2$.
Bonder and Groisman in \cite{FernandezGroisman2009} also prove  the finite time blow-up for almost every initial data when nonnegative convex drift function satisfying $\int^{\infty} 1/f < \infty$ is taken into consideration. We refer the reader to  \cite{BaoYuan2016, chow2, chow1, Foondun-Parshad-15, Fujita, Foondun-liu-nane, LiPengJia2016, LvDuan2015} for more information on the blow-up phenomenon in the deterministic setting.

In this paper, we will work with white and  space colored noise driven equations. 
 First, we will look at the following equation driven by the space-time white noise
\begin{equation}\label{eq:white}
\partial^\beta_tu_t(x)=-\nu(-\Delta)^{\alpha/2} u_t(x)+I^{1-\beta}[ \sigma(u)\stackrel{\cdot}{W}(t,x)]\quad{t>0}\quad\text{and}\quad x\in \R^d,
\end{equation}
where $\stackrel{\cdot}{W}(t,x)$ is a space-time white noise.  We will also look at equations driven by  noise colored  in space of  the following type
\begin{equation}\label{eq:colored}
\partial^\beta_tu_t(x)=-\nu(-\Delta)^{\alpha/2} u_t(x)+I^{1-\beta}[ \sigma(u)\stackrel{\cdot}{F}(t,x)]\quad{t>0}\quad\text{and}\quad x\in \R^d,
\end{equation}
{where $\stackrel{\cdot}{F}(t,x)$  is a space colored noise.}
The corresponding mild solution in the sense of Walsh \cite{walsh} is given by
\begin{equation}\label{mild}
u_t(x)=
(\cG u)_t(x)+ \int_{\R^d}\int_0^t G_{t-s}(x-y)\sigma(u_s(y))F(\d s\,\d y).
\end{equation}
We will again be interested in the random field solution. But for this equation, we will need to impose some conditions on the noise term. We have
\begin{align*}
\E[\dot F(s,x)\dot F(t,y)]=\delta_0(t-s)f(x,\,y),
\end{align*}
where $f(x,\,y)\leq g(x-y)$ and $g$ is a locally integrable function on $\R^d$ with a possible singularity at $0$ satisfying
\begin{equation*}
\int_{\R^d}\frac{\hat{g}(\xi)}{1+|\xi|^\alpha}\d \xi<\infty,
\end{equation*}
where $\hat{g}$ denotes the Fourier transform of $g$.

It is worth mentioning that not a lot of work has been done in this type of problems for space-time fractional  stochastic partial differential equations.

 \begin{assumption}
The function $\sigma$ is a locally Lipschitz function satisfying the following growth condition. There exist a $\gamma>0$ such that
\begin{equation}\label{growth}
\sigma(x)\geq |x|^{1+\gamma}\quad\text{for all}\quad x\in \R^d.
\end{equation}
\end{assumption}

Now we are  ready to state our findings in detail. For the first couple  of our results, we will assume that the initial condition is bounded below by a positive constant given below
\begin{equation}\label{minumum-values-u0}
\inf_{x\in\R^d}u_0(x):=\kappa.
\end{equation}

\begin{theorem}\label{theo1}
Let $d<(2\wedge \beta^{-1})\alpha$. Suppose that $\kappa>0$ and $u_t$ be the solution to \eqref{eq:white}. Then there exists a $t_0>0$ such that for all $x\in \R^d$,
\begin{equation*}
\E|u_t(x)|^2=\infty\quad\text{whenever}\quad t\geq t_0.
\end{equation*}
\end{theorem}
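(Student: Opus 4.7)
The plan is to reduce the theorem to a nonlinear Volterra inequality for
\[
\phi(t)\;:=\;\inf_{x\in\R^d}\E|u_t(x)|^2
\]
and iterate it until $\phi$ is forced to be infinite for all sufficiently large $t$. Applying Walsh's isometry to the mild formulation of \eqref{eq:white} gives
\[
\E|u_t(x)|^2=\bigl|(\cG u)_t(x)\bigr|^2+\int_0^t\!\!\int_{\R^d}G_{t-s}(x-y)^2\,\E\!\bigl[\sigma(u_s(y))^2\bigr]\,dy\,ds.
\]
Because $u_0\ge\kappa$ and $G_t$ is a probability density (it is the law of the time-changed stable process $X_{E_t}$), the deterministic part obeys $(\cG u)_t(x)\ge\kappa$. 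The growth condition $\sigma(y)^2\ge|y|^{2+2\gamma}$ together with Jensen's inequality for the convex map $z\mapsto z^{1+\gamma}$ yields $\E[\sigma(u_s(y))^2]\ge\phi(s)^{1+\gamma}$.

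Next, from the self-similarity $G_t(x)=t^{-\beta d/\alpha}G_1(xt^{-\beta/\alpha})$ a change of variables produces $\int_{\R^d}G_{t-s}(y)^2\,dy=c_0(t-s)^{-\beta d/\alpha}$ with $c_0=\norm{G_1}_2^2<\infty$, the finiteness being exactly where the standing hypothesis $d<(2\wedge\beta^{-1})\alpha$ is used. Combining these bounds and taking the infimum over $x$ gives the key Volterra-type inequality
\[
\phi(t)\ge\kappa^2+c_0\int_0^t(t-s)^{-\beta d/\alpha}\phi(s)^{1+\gamma}\,ds.
\]

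Set $\theta:=1-\beta d/\alpha>0$ and iterate this inequality with a power ansatz $\phi(s)\ge A_n s^{\theta_n}$. Starting from $\phi(s)\ge\kappa^2$, one application produces $A_1=c_0\kappa^{2(1+\gamma)}/\theta$ and $\theta_1=\theta$. Inductively the Beta integral yields
\[
\theta_{n+1}=\theta+(1+\gamma)\theta_n,\qquad A_{n+1}=c_0\,A_n^{1+\gamma}\,B\bigl(\theta,(1+\gamma)\theta_n+1\bigr),
\]
so $\theta_n=\theta[(1+\gamma)^n-1]/\gamma$ grows like $(1+\gamma)^n$. Using $B(\theta,z)\sim\Gamma(\theta)z^{-\theta}$ as $z\to\infty$ and solving the resulting linear recursion for $\log A_n$, a short computation with geometric sums gives
\[
\log\!\bigl(A_n t^{\theta_n}\bigr)=(1+\gamma)^n\bigl[C_\kappa+(\theta/\gamma)\log t\bigr]+O(n),
\]
for a constant $C_\kappa$ depending only on $\kappa,\gamma,\theta,c_0$. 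Therefore, for every $t\ge t_0:=\exp(-C_\kappa\gamma/\theta)$ the lower bound $A_n t^{\theta_n}$ diverges as $n\to\infty$, which forces $\phi(t)=\infty$. Since $\E|u_t(x)|^2\ge\phi(t)$ for every $x$, the theorem follows.

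The main obstacle is the last step: one must confirm that the Beta-function decay $B(\theta,(1+\gamma)\theta_n+1)\sim\Gamma(\theta)[(1+\gamma)\theta_n]^{-\theta}$ in $A_{n+1}$ is only polynomial in the $(1+\gamma)^n$-scale of $\theta_n$, so that the exponential growth coming from the $(1+\gamma)$-power applied to $A_n$ dominates it and a genuinely divergent lower bound really does emerge for all $t\ge t_0$. This is standard but delicate bookkeeping with geometric series in which the precise dependence on $\beta,\alpha,d$ matters.
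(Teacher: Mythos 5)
Your proposal is correct, and its first half is exactly the paper's argument: the same Walsh-isometry decomposition, the bound $I_1\geq\kappa^2$ from $u_0\geq\kappa$ and $\int_{\R^d}G_t(x-y)\,\d y=1$, the growth condition plus Jensen giving $\E[\sigma(u_s(y))^2]\geq\phi(s)^{1+\gamma}$, and the $L^2$ kernel identity $\int_{\R^d}G_t^2(x)\,\d x=C^\ast t^{-\beta d/\alpha}$ (the paper invokes Lemma \ref{Lem:Green1}; your self-similarity derivation is an equivalent route), all culminating in the identical renewal inequality $\phi(t)\geq\kappa^2+c_0\int_0^t\phi(s)^{1+\gamma}(t-s)^{-\beta d/\alpha}\,\d s$. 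Where you genuinely diverge is in extracting blow-up from that inequality: the paper substitutes $q(t)=t^{d\beta/\alpha}h(t)$ and compares with a separable ODE whose explicit solution blows up in finite time (Proposition \ref{rem-volterra}, extended to all $\gamma>0$ via the exponent-reduction trick of Remark \ref{rem:gamma-greater-than1}); note the proof in the text cites Proposition \ref{volterra}, which needs a \emph{large} constant $C$, so for arbitrary $\kappa>0$ the operative statement in the paper is really Proposition \ref{rem-volterra} together with Remark \ref{rem:gamma-greater-than1}. Your Fujita-style bootstrap with the ansatz $\phi(s)\geq A_ns^{\theta_n}$ replaces this ODE comparison, and the bookkeeping you flag as the main obstacle does close: the recursions $\theta_{n+1}=\theta+(1+\gamma)\theta_n$ and $A_{n+1}=c_0A_n^{1+\gamma}B(\theta,(1+\gamma)\theta_n+1)$ are right, $\log B(\theta,(1+\gamma)\theta_n+1)=-\theta n\log(1+\gamma)+O(1)$ is only linear in $n$, so the linear recursion $\log A_{n+1}=(1+\gamma)\log A_n+O(n)$ gives $\log A_n=C'_\kappa(1+\gamma)^n+O(n)$ and hence your claimed $\log\bigl(A_nt^{\theta_n}\bigr)=(1+\gamma)^n\bigl[C_\kappa+(\theta/\gamma)\log t\bigr]+O(n)$, forcing $\phi(t)=\infty$ for all $t>t_0$ (take $t_0$ marginally above $\exp(-C_\kappa\gamma/\theta)$ so the exponential term beats the $O(n)$ correction at the endpoint). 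Comparing the two: the paper's route is shorter and yields an explicit blow-up time, but needs the substitution plus the case split of Remark \ref{rem:gamma-greater-than1} when $(1+\gamma)d\beta/\alpha\geq1$; your iteration is uniform in all $\gamma>0$ with no case distinction, avoids the citation subtlety entirely, and makes the dependence of $t_0$ on $\kappa$ transparent ($t_0\to\infty$ as $\kappa\downarrow0$), at the cost of the Beta-function asymptotics. One small imprecision: the hypothesis $d<(2\wedge\beta^{-1})\alpha$ splits into $d<2\alpha$, which gives finiteness of $c_0=\norm{G_1}_2^2$, and $d<\alpha/\beta$, which gives $\theta=1-\beta d/\alpha>0$ (integrability of the kernel singularity in time); you attribute both to the finiteness of $c_0$, although you do correctly use $\theta>0$ where it actually matters.
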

This theorem  states that provided that the initial function is bounded below, the second moment will eventually be infinite for white noise driven equations.

{
\begin{remark}\label{remark-general-gamma}We can also get a blow up for the following equation  that was considered by Chen et al \cite{le-chen-2019} for any $\gamma>0$  and $d< 2\alpha+\frac\alpha\beta \min (2\gamma-1, 0)$.
\begin{equation}\label{eq:white-diff-frac-int}
\partial^\beta_tu_t(x)=-\nu(-\Delta)^{\alpha/2} u_t(x)+I^{\gamma}[ \sigma(u)\stackrel{\cdot}{W}(t,x)]\quad{t>0}\quad\text{and}\quad x\in \R^d,
\end{equation}
In this case the corresponding mild solution in the sense of Walsh \cite{walsh} is given by
\begin{equation}\label{mild}
u_t(x)=
(\cG u)_t(x)+ \int_{\R^d}\int_0^t H_{t-s}(x-y)\sigma(u_s(y))W(\d s\,\d y),
\end{equation}
where $H(t,x)$ is given by the time  fractional derivative of $G(t,x)$.  Note that when $\gamma=1-\beta$ this condition becomes the same condition in Theorem \ref{theo1}, $d<(2\wedge \beta^{-1})\alpha$. We can get finite time blow up as in the proof of Theorem \ref{theo1}. Some details of the proof of results in this remark are presented in section \ref{sect-3}.

\end{remark} }

 We have a slightly more complicated picture for equations with colored noise. We will assume the following non-degeneracy condition on the spatial correlation of the noise.
\begin{assumption}\label{color}
For fixed $R>0$,  there exists a positive number $K_{f}$ such that
\begin{equation*}
\inf_{x,\,y\in B(0,\,R)}f(x,\,y)\geq K_{f}.
\end{equation*}
\end{assumption}

Since we mostly set $R=1$ when using this condition, the dependence of $K_f$ on $R$ is not necessarily specified.
 The above   assumption is also very mild. There are a lot of examples including the Riesz Kernel, exponential kernel, Ornstein-Uhlenbeck-type kernels, Poisson kernels and Cauchy kernels; see, for example, Example 1.4 in \cite{Foondun-liu-nane} for more details.

\begin{theorem}\label{theo2}
Let $u_t$ be the solution to \eqref{eq:colored} and suppose that Assumption \ref{color} holds. Fix $t_0>0$, then there exists a positive number $\kappa_0$ such that for all $\kappa\geq \kappa_0$, and  $ x \in \R^d$ we have
\begin{equation*}
\E|u_t(x)|^2=\infty, \quad \text{whenever}\quad t\geq t_0.
\end{equation*}
\end{theorem}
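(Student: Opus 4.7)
The plan is to square the mild solution \eqref{mild}, invoke the Walsh isometry for colored noise to produce a Volterra-type lower bound on $\E|u_t(x)|^2$, and then run an iterative bootstrap that drives this bound to infinity once $\kappa$ is sufficiently large. The first step computes
\begin{equation*}
\E|u_t(x)|^2 = [(\cG u)_t(x)]^2 + \int_0^t\!\int_{\R^d}\!\int_{\R^d} G_{t-s}(x-y)\,G_{t-s}(x-z)\,f(y,z)\,\E[\sigma(u_s(y))\sigma(u_s(z))]\,\d y\,\d z\,\d s.
\end{equation*}
Because $G_t$ is the transition density of the subordinated process $X_{E_t}$ (hence a probability density) and $u_0\geq\kappa$, one has $(\cG u)_t(x)\geq\kappa$, and the same argument gives $\E u_s(y)=(\cG u)_s(y)\geq\kappa$ for every $s,y$.

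Next I localize to a ball $B\subset\R^d$ on which Assumption~\ref{color} supplies $f(y,z)\geq K_f$, and exploit the Fubini--factorization identity
\begin{equation*}
\int_B\!\int_B G_{t-s}(x-y)\,G_{t-s}(x-z)\,\sigma(u_s(y))\,\sigma(u_s(z))\,\d y\,\d z=\Bigl(\int_B G_{t-s}(x-y)\,\sigma(u_s(y))\,\d y\Bigr)^{\!2}
\end{equation*}
together with Jensen's inequality $\E[Y^2]\geq(\E Y)^2$. Combined with $\sigma(u)\geq u^{1+\gamma}$ and the elementary bound $\E[\sigma(u_s(y))]\geq(\E u_s(y))^{1+\gamma}\geq\kappa^{1+\gamma}$, this produces the first quantitative estimate
\begin{equation*}
\E|u_t(x)|^2\geq\kappa^2+K_f\,\kappa^{2+2\gamma}\int_0^t\!\Bigl(\int_B G_{t-s}(x-y)\,\d y\Bigr)^{\!2}\,\d s.
\end{equation*}

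Since the right-hand side is finite for finite $t$, a bootstrap is needed to obtain $+\infty$. The idea is to feed the improved pointwise lower bound on $\E|u_s(y)|^2$ back into the isometry. Using the factorized form together with $\sigma(u_s(y))\sigma(u_s(z))\geq(u_s(y)u_s(z))^{1+\gamma}$ and Jensen's inequality on $y\mapsto y^{1+\gamma}$ converts a second-moment estimate into control on $\E[\sigma(u_s(y))\sigma(u_s(z))]\geq(\E[u_s(y)u_s(z)])^{1+\gamma}$, which after bounding the off-diagonal moment from below produces a nonlinear Volterra integral inequality of the schematic form $F(t)\geq\kappa^2+c\int_0^t \mathcal{J}(t-s)\,F(s)^{1+\gamma}\,\d s$ for $F(t):=\inf_{x\in K}\E|u_t(x)|^2$ on a compact set $K$. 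A Fujita-type analysis then shows the solution of this inequality diverges at a time $t^*(\kappa)$ which tends to $0$ as $\kappa\to\infty$; choosing $\kappa_0$ so that $t^*(\kappa_0)<t_0$ completes the proof.

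The main obstacle is managing the propagation of second-moment information through the nonlinearity $\sigma$ in the bootstrap. For $\gamma\geq 1$, Lyapunov's inequality $\E[u_s^{1+\gamma}]\geq(\E u_s^2)^{(1+\gamma)/2}$ closes the iteration cleanly. For $0<\gamma<1$ the monotonicity runs the wrong way, forcing us to keep the off-diagonal second moment $\E[u_s(y)u_s(z)]$ and exploit the positive correlations inherited from the multiplicative Gaussian driver, or to first prove pathwise non-negativity of the mild solution and then use the $L^{1+\gamma}$-comparison more carefully. Tracking the constants $K_f$, $R$, and the heat-kernel mass $\int_B G_{t-s}(x-y)\,\d y$ in explicit dependence on $\kappa$ is what ultimately pins down the critical threshold $\kappa_0=\kappa_0(t_0)$.
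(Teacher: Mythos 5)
Your skeleton (Walsh isometry, the growth bound $\sigma(x)\geq|x|^{1+\gamma}$, Jensen, a nonlinear Volterra inequality whose blow-up time shrinks as $\kappa\to\infty$, then choosing $\kappa_0$ so the blow-up occurs before $t_0$) matches the paper's strategy, and your first quantitative estimate is correct. But the bootstrap as written does not close, and the place where it fails is exactly the clause ``after bounding the off-diagonal moment from below.'' For colored noise the isometry produces the genuinely two-point quantity $\E[\sigma(u_s(y))\sigma(u_s(z))]$ at distinct points $y\neq z$, which after your Jensen step leaves you with $(\E|u_s(y)u_s(z)|)^{1+\gamma}$; yet your renewal inequality is stated for the diagonal quantity $F(t):=\inf_{x\in K}\E|u_t(x)|^2$. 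There is no route from $F$ to the off-diagonal moment: Cauchy--Schwarz bounds $\E|u_s(y)u_s(z)|$ from \emph{above} by diagonal moments, not below, and factorizing $\E\bigl[|u_s(y)|^{1+\gamma}|u_s(z)|^{1+\gamma}\bigr]\geq \E|u_s(y)|^{1+\gamma}\,\E|u_s(z)|^{1+\gamma}$ requires positive association of the solution field, which you invoke but neither prove nor cite, and which is not established for these space-time fractional equations. Your Lyapunov fix for $\gamma\geq1$ has the same defect: it controls one-point moments only, while the colored-noise integrand is irreducibly two-point, so the $\gamma<1$ versus $\gamma\geq1$ dichotomy you worry about is a symptom of bootstrapping the wrong quantity rather than the real obstruction.

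The paper's resolution (Proposition \ref{prop-colored}) is to abandon the diagonal moment and iterate directly on the two-point function $H(s):=\inf_{x,y\in B(0,\,1)}\E|u_s(x)u_s(y)|$, which is self-reproducing under the isometry: since $\E[\sigma(u_s(z))\sigma(u_s(w))]\geq \E\bigl[|u_s(z)u_s(w)|^{1+\gamma}\bigr]\geq \bigl(\E|u_s(z)u_s(w)|\bigr)^{1+\gamma}$ by convexity of $r\mapsto r^{1+\gamma}$ on $[0,\infty)$ (valid for \emph{every} $\gamma>0$, with no case split), and since Proposition \ref{prop-kernel}, via the heat-kernel lower bound \eqref{heat} on balls of radius $(t-s)^{\beta/\alpha}$, gives
\begin{equation*}
\int_{B(0,\,1)\times B(0,\,1)}G_{t-s}(x-z)G_{t-s}(y-w)f(z,\,w)\,\d z\,\d w\geq c_1K_f
\quad\text{for}\quad t\leq\Bigl(\tfrac12\Bigr)^{\alpha},
\end{equation*}
one obtains $H(t)\geq c^2\kappa^2+c_1K_f\int_0^t H(s)^{1+\gamma}\,\d s$, and Proposition \ref{volterra} forces $H$ to blow up before any prescribed $t_0$ once $\kappa$ is large. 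The step you also gloss over---upgrading blow-up of $H$ on a compact set to $\E|u_t(x)|^2=\infty$ for \emph{all} $x\in\R^d$ and $t\geq t_0$---is then handled by restarting in the mild formulation: the integral over $s\in(t_0,t)$ of the strictly positive kernel factor against $(\E|u_s(y)u_s(w)|)^{1+\gamma}=\infty$ is infinite for every $x$. To rescue your diagonal-moment route you would first have to prove an FKG-type positivity property for the solution; as written, the central bootstrap step is a genuine gap.
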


To establish non-existence of the second moment, in contrast to  Theorem \ref{theo1},  we require that the initial condition is large enough. This is  a result of the spatially correlated nature of the noise, which induces some extra dissipation effect.
 In fact, even in the case of the corresponding linear equation($\sigma(u)\propto u$), it is known that for some correlation functions, their moments might not grow exponentially fast. See for instance \cite{ChenKim} and \cite{HuaLeNualart}. However, if we consider  the case when the correlation function is given by the Riesz Kernel, we have the following stronger result concerning the solution to \eqref{eq:colored}.

\begin{theorem}\label{Theo3}
Suppose that the correlation function $f$ is given by
\begin{align*}
f(x,\,y)=\frac{1}{|x-y|^\omega}\quad\text{with}\quad \omega<d\wedge(\alpha\beta^{-1}).
\end{align*}
Then for $\kappa>0$, there exists a positive number $\tilde{t}$ such that for all $t\geq \tilde{t}$ and $x\in \R^d$,
\begin{equation*}
\E|u_t(x)|^2=\infty.
\end{equation*}
\end{theorem}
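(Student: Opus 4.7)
The plan is to convert the Walsh-isometry formula for the second moment into a nonlinear Volterra-type integral inequality for $F(t):=\inf_{x\in\R^d}\E|u_t(x)|^2$, and then apply an Osgood-type comparison to force finite-time blow-up. By spatial translation-invariance of the noise and the kernel, I may assume without loss of generality $u_0\equiv\kappa$, so that the law of $u_t(\cdot)$ is stationary.

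First, I would apply Walsh's isometry to the mild formulation \eqref{mild}, yielding
\begin{equation*}
\E|u_t(x)|^2 = \bigl((\cG u)_t(x)\bigr)^2 + \int_0^t\!\!ds\int\!\!\int G_{t-s}(x-y)G_{t-s}(x-z)|y-z|^{-\omega}\E[\sigma(u_s(y))\sigma(u_s(z))]\,dy\,dz,
\end{equation*}
with $(\cG u)_t(x)\geq\kappa$ since $G_t$ is a probability density. Using Plancherel together with $\hat G_t(\xi)=E_\beta(-\nu t^\beta|\xi|^\alpha)$ and $\widehat{|\cdot|^{-\omega}}\propto|\xi|^{\omega-d}$, the inner deterministic integral equals $c_\omega\int E_\beta(-\nu(t-s)^\beta|\xi|^\alpha)^2|\xi|^{\omega-d}d\xi$. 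The change of variable $\xi\mapsto(t-s)^{-\beta/\alpha}\eta$ gives the scaling $c_1(t-s)^{-\beta\omega/\alpha}$, with the hypothesis $\omega<d\wedge(\alpha/\beta)$ ensuring that $c_1$ is finite and that the outer time integral converges near $s=t$.

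Second, I would insert $F(s)^{1+\gamma}$ as a lower-bound factor in the expectation. Non-negativity of $u_t$ (following from $u_0\geq 0$, $\sigma(0)=0$, and a Picard/comparison argument) together with the growth assumption $\sigma(x)\geq x^{1+\gamma}$ gives $\sigma(u_s(y))\sigma(u_s(z))\geq u_s(y)^{1+\gamma}u_s(z)^{1+\gamma}$; Jensen's inequality on the convex map $w\mapsto w^{1+\gamma}$ then yields $\E[(u_s(y)u_s(z))^{1+\gamma}]\geq(\E[u_s(y)u_s(z)])^{1+\gamma}$. Moreover, the same isometry identity applied to the two-point function gives $\E[u_s(y)u_s(z)]-\kappa^2\geq 0$, and a spatial modulus-of-continuity bound for $u_s$ at the natural scale $(t-s)^{\beta/\alpha}$---of the form $\E|u_s(y)-u_s(z)|^2\lesssim \epsilon^2 F(s)$ for $|y-z|\lesssim(t-s)^{\beta/\alpha}$---upgrades this via Cauchy-Schwarz to $\E[u_s(y)u_s(z)]\geq c_2 F(s)$ on a diagonal neighborhood of that diameter. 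Combining all of the above yields
\begin{equation*}
F(t)\geq\kappa^2 + c\int_0^t(t-s)^{-\beta\omega/\alpha}F(s)^{1+\gamma}\,ds.
\end{equation*}

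Finally, because $\gamma>0$, this is an Osgood-type integral inequality: a standard iteration (or comparison with the scalar Volterra equation $\phi'=\phi^{1+\gamma}$) shows that $F$ cannot remain finite beyond some $\tilde t<\infty$ depending only on $c,\gamma,\kappa,\beta\omega/\alpha$, giving $\E|u_t(x)|^2=\infty$ for $t\geq\tilde t$. I expect the main obstacle to lie in the covariance lower bound $\E[u_s(y)u_s(z)]\geq c_2 F(s)$ near the diagonal: this is precisely where the Riesz structure is essential, since the Riesz scaling $(t-s)^{-\beta\omega/\alpha}$ matches the diameter on which $u_s$ remains, in mean square, nearly constant, allowing the $F(s)^{1+\gamma}$ factor to survive the spatial integration. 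The weaker Assumption~\ref{color} of Theorem~\ref{theo2} lacks this matched scaling, which is precisely why that result needed $\kappa$ taken large.
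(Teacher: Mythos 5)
There is a genuine gap, and it sits exactly where you suspected: the near-diagonal covariance bound $\E[u_s(y)u_s(z)]\geq c_2\,F(s)$ for $|y-z|\lesssim (t-s)^{\beta/\alpha}$, which you propose to extract from a modulus-of-continuity estimate $\E|u_s(y)-u_s(z)|^2\lesssim \epsilon^2 F(s)$. No such estimate is available, and it cannot be proved in this setting: all spatial-regularity bounds for these equations are obtained under a \emph{globally} Lipschitz $\sigma$ and control increments in terms of $\sup_{x}\sup_{s\leq T}\E|u_s(x)|^2$ --- precisely the quantity the theorem asserts is infinite --- never in terms of the infimum $F(s)$; with $\sigma$ superlinear ($\sigma(x)\geq|x|^{1+\gamma}$) the solution is only local and there is no mechanism making increments small \emph{relative to} $F(s)$. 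Worse, the scale at which you need near-constancy is $(t-s)^{\beta/\alpha}$, which for $s$ bounded away from $t$ is of macroscopic order $t^{\beta/\alpha}$; the claim would have to hold at arbitrarily large spatial scales as $t\to\infty$, which is hopeless. (Two smaller unsupported steps: ``WLOG $u_0\equiv\kappa$'' presumes a comparison principle not established for time-fractional SPDEs --- though you only need $(\cG u)_t(x)\geq\kappa$, which holds directly since $G_t$ is a probability density --- and nonnegativity of $u$ via ``Picard/comparison'' is likewise unproven; the paper sidesteps both by working with $\E|u_s(z)u_s(w)|$ and using $\sigma(x)\geq|x|^{1+\gamma}$.)

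The paper's proof avoids this reduction entirely, and it is instructive to see how. It never passes to the diagonal quantity $F$: it closes the renewal inequality on the two-point function itself, $Q(s):=\inf_{x,y\in B(0,1)}\E|u_{T+s}(x)u_{T+s}(y)|$, for which the nonlinear term reproduces exactly $Q(s)^{1+\gamma}$ after the kernel lower bound of Proposition \ref{prop-kernel}, so no continuity estimate is needed. The Riesz scaling enters in a different place than you put it: using the real-space bounds \eqref{heat} (not Plancherel), the propositions of Section \ref{sect-4} (in particular Proposition \ref{correlation-lower-bound}) show that the two-point function grows polynomially, $\E|u_t(x)u_t(y)|\gtrsim t^{(\alpha-\omega\beta)/\alpha}$ on the \emph{expanding} balls $B(0,t^{\beta/\alpha})$, and after one iteration $\gtrsim t^{2(\alpha-\omega\beta)/\alpha}$. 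The proof then restarts the solution at a large time $T$, obtaining
\begin{equation*}
Q(t)\geq \kappa^2+c\,T^{2(\alpha-\omega\beta)/\alpha}+c_1\int_0^t Q(s)^{1+\gamma}\,\d s ,
\end{equation*}
whose effectively large constant term forces blow-up within the fixed short window on which the kernel bounds are valid, no matter how small $\kappa>0$ is. This restart mechanism is precisely what lets Theorem \ref{Theo3} dispense with the largeness of $\kappa$ required in Theorem \ref{theo2}; your sketch has no substitute for it, since even granting the covariance bound, your insertion of $\bigl(\inf\E[u_s(y)u_s(z)]\bigr)^{1+\gamma}$ under the full-space integral requires a uniform lower bound on the two-point function over \emph{all} separations carrying kernel mass, which the near-diagonal restriction cannot deliver for $s$ far from $t$. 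Your final Osgood step (via the paper's Proposition \ref{rem-volterra-general}, using $\beta\omega/\alpha<1$) would be fine, but the inequality feeding it is not established.
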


{
\begin{remark}
We can also get a blow up for the following equation  that was considered by Chen et al \cite{le-chen-2019} for any $\gamma>0$ (where we also need to add the condition from Chen et al \cite{le-chen-2019}about d and other parameters!)
\begin{equation}\label{eq:white-diff-frac-int}
\partial^\beta_tu_t(x)=-\nu(-\Delta)^{\alpha/2} u_t(x)+I^{\gamma}[ \sigma(u)\stackrel{\cdot}{F}(t,x)]\quad{t>0}\quad\text{and}\quad x\in \R^d,
\end{equation}
In this case the corresponding mild solution in the sense of Walsh \cite{walsh} is given by
\begin{equation}\label{mild}
u_t(x)=
(\cG u)_t(x)+ \int_{\R^d}\int_0^t H_{t-s}(x-y)\sigma(u_s(y))F(\d s\,\d y).
\end{equation}
Where $H_t(x)$ is given by the time  fractional derivative of $G_t(x)$. Using Plancharel theorem and equation (4.8)  in \cite{le-chen-2019} we can get finite time blow up as in the proof of Theorem \ref{Theo3}.
We get  the following lower bound by using Plancharel theorem and equation (4.8)  in \cite{le-chen-2019}
\begin{equation*}
\int_{\R^d\times \R^d}H_{t-s}(x_1-y_1)H_{t-s}(x_2-y_2)f(y_1,\,y_2)\,\d y_1\d y_2\geq c(t-s)^{2\beta+2\gamma-2-\beta w/\alpha}.
\end{equation*}
Hence the nonlinear renewal inequality in this case is
$$
H(t)\geq C+C \int_0^t H(s)^{1+\eta} (t-s)^{2\beta+2\gamma-2-\beta w/\alpha},
$$
where $H(s)=\inf_{x\in \R^d, y\in \R^d} \E|u_s(x)u_s(y)|$. Similarly, we can get a blow-up result following the proof of  Remark \ref{remark-general-gamma}.

\end{remark} }

It is also important to mention that all the results established so far in this work are obtained under the assumption that the initial function is bounded below away from zero. In fact, as we shall see from the next result, this condition can be weakened.

\begin{assumption}\label{initial}
Suppose that initial condition is non-negative and satisfies the following,
\begin{equation*}
\int_{B(0,\,1)}u_0(x)\,\d x:=K_{u_0}>0.
\end{equation*}
\end{assumption}
We have taken $B(0,\,1)$ as a matter of convenience.

\begin{theorem}\label{energy-white}
Let $d<(2\wedge \beta^{-1})\alpha$, and $u_t$ be the solution to \eqref{eq:white}. Then, under Assumption \ref{initial}, there exists a $t_0\geq 0$ such that for all $t\geq t_0$ and $x\in \R^d$,
\begin{equation*}
\E|u_t(x)|^2=\infty\quad\text{whenever}\quad K_{u_0}\geq K,
\end{equation*}
where $K$ is some positive constant.
\end{theorem}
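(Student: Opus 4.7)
The plan is to reduce Theorem \ref{energy-white} to the setting of Theorem \ref{theo1} by using the stochastic integral term to convert the integrated initial mass $K_{u_0}$ on $B(0,1)$ into a uniform pointwise lower bound at some intermediate time $t_1$. Starting from the mild formulation \eqref{mild}, an application of Walsh's isometry, Jensen's inequality, and the growth condition $\sigma(x)^2\geq|x|^{2+2\gamma}$ yields the engine inequality
\begin{equation*}
\E|u_t(x)|^2 \;\geq\; (\cG u)_t(x)^2 \;+\; c_0 \int_0^t \int_{\R^d} G_{t-s}(x-y)^2 \bigl(\E|u_s(y)|^2\bigr)^{1+\gamma}\,dy\,ds.
\end{equation*}

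Next, I would extract a seed from the initial condition: since $u_0\geq 0$ and $\int_{B(0,1)} u_0(y)\,dy = K_{u_0}$,
\begin{equation*}
(\cG u)_t(x) \;\geq\; K_{u_0}\inf_{z\in B(0,1)} G_t(x-z),
\end{equation*}
and heat-kernel bounds for the density of the time-changed process $X_{E_t}$ imply that this seed is strictly positive everywhere and uniformly bounded below by $c_1 K_{u_0} t^{-\beta d/\alpha}$ on balls of radius of order $t^{\beta/\alpha}$. Substituting the seed back into the engine inequality and invoking off-diagonal heat-kernel estimates for the resulting space-time integral would give, at a suitably chosen time $t_1$ and on a suitably large ball,
\begin{equation*}
\E|u_{t_1}(x)|^2 \;\geq\; c_2 \, K_{u_0}^{2+2\gamma} \quad\text{for all } x \in B(0,R_1).
\end{equation*}

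For $K_{u_0}\geq K$ sufficiently large, the quantity $c_2^{1/2} K_{u_0}^{1+\gamma}$ exceeds the threshold that triggers the renewal-type blow-up underlying Theorem \ref{theo1}. Applying that argument to the shifted solution $\tilde u_s(x):=u_{s+t_1}(x)$, whose initial datum now enjoys the uniform lower bound just obtained, would then produce a finite $t_0$ beyond which $\E|u_t(x)|^2=\infty$ for every $x\in\R^d$, as required.

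The main obstacle is that Theorem \ref{theo1} is proved under a genuinely uniform-in-$x$ lower bound on $u_0$, whereas the bootstrap above only delivers such a bound on the ball $B(0,R_1)$, not on all of $\R^d$. The hard part is therefore to verify that the nonlinear renewal inequality driving the proof of Theorem \ref{theo1} continues to blow up when the spatial infimum is taken over a finite ball; this will require off-diagonal lower bounds of the form $\int_{B(0,R_1)} G_{t-s}(x-y)^2\,dy\gtrsim (t-s)^{-\beta d/\alpha}$ valid on the relevant space-time window and uniformly in $x\in B(0,R_1)$. An equivalent route, which may be technically cleaner, is to first establish blow-up at the single point $x=0$ directly from the seeded inequality and then propagate the blow-up to every $x\in\R^d$ by one further application of the engine inequality, exploiting the strict positivity of $G_{t-s}(x-y)^2$.
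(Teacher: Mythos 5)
Your overall strategy is essentially the paper's, and the obstacle you flag at the end is precisely the point the paper addresses: it never invokes Theorem \ref{theo1} over all of $\R^d$, but runs the renewal argument with the infimum localized to the unit ball. Concretely, Proposition \ref{determ} uses the subordination representation $G_t(x)=\int_0^\infty p(s,x)f_{E_t}(s)\,\d s$ to show $(\cG u)_{t+t_0}(x)\geq cK_{u_0}$ for all $x\in B(0,1)$ and all $t$ in a window $(0,t_0]$; Proposition \ref{prop5.1} then feeds this into the Walsh isometry, restricts the spatial integral to $B(0,1)$, and uses exactly the on-ball kernel mass bound you ask for, namely $\int_{B(0,1)}G^2_{t-S}(x-y)\,\d y\geq c(t-S)^{-\beta d/\alpha}$ for $x\in B(0,1)$ and $t<1$ (a consequence of \eqref{heat}), to obtain
\begin{equation*}
R(t)\geq cK_{u_0}^2+c_1\int_0^t\frac{R(S)^{1+\gamma}}{(t-S)^{d\beta/\alpha}}\,\d S,
\qquad R(S):=\inf_{x\in B(0,1)}\E|u_{S+t_0}(x)|^2,
\end{equation*}
which blows up for $K_{u_0}$ large by Proposition \ref{volterra}. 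Your ``equivalent route'' --- blow-up on a ball first, then one further pass through the engine inequality using strict positivity of $G^2_{t-s}(x-y)$ to propagate to every $x\in\R^d$ --- is verbatim the paper's proof of the theorem.

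One caution about your primary route: for $\beta<1$ the dynamics are not Markovian, so $\tilde u_s:=u_{s+t_1}$ is \emph{not} a solution of \eqref{eq:white} with initial datum $u_{t_1}$ (the kernel $G$ has no semigroup property), and Theorem \ref{theo1} cannot literally be applied to the shifted solution. The paper implements the restart inside the moment identity instead: write $\E|u_{t+t_0}(x)|^2$, discard the contribution of $[0,t_0]$ in the Duhamel integral, and substitute $S=s-t_0$; crucially, the constant term of the resulting renewal inequality is then supplied by the \emph{original} deterministic term $(\cG u)_{t+t_0}(x)$, bounded below by $cK_{u_0}$ on the whole time window by Proposition \ref{determ}, rather than by a pointwise bound on $\E|u_{t_1}(x)|^2$ of the kind you manufacture (which by itself would furnish the constant only at $S=0$, not uniformly over the window). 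With that substitution your argument matches the paper's; in particular your extra bootstrap producing $c_2K_{u_0}^{2+2\gamma}$ on $B(0,R_1)$ is unnecessary, since the deterministic term alone already yields the seed $cK_{u_0}^2$ on $B(0,1)$.
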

We have a similar result for the  equation driven by space colored noise.
\begin{theorem}\label{energy-colored}
Let $u_t$ be the solution to \eqref{eq:colored}. Then, under Assumptions \ref{color} and \ref{initial}, there exists a $t_0\geq 0$ such that for all $t\geq t_0$ and $x\in \R^d$,
\begin{equation*}
\E|u_t(x)|^2=\infty\quad\text{whenever}\quad K_{u_0}\geq K,
\end{equation*}
where $K$ is a positive constant.
\end{theorem}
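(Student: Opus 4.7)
The plan is to follow the strategy used for Theorem \ref{theo2}, with one extra preliminary step to convert the integrated lower bound on $u_0$ from Assumption \ref{initial} into a pointwise lower bound on the deterministic part $(\cG u)_t(x)$ over a compact space--time window. Specifically, I would exploit the positivity and continuity of the density $G_t(z)$ of the time-changed process $X_{E_t}$: for $x\in B(0,1)$ and $t$ in a fixed interval $[t_1,t_2]$ with $0<t_1<t_2$, the kernel $G_t(z)$ is bounded below by a constant $c_1=c_1(t_1,t_2)>0$ on $\{|z|\leq 2\}$, and therefore
$$(\cG u)_t(x)\ \geq\ \int_{B(0,1)} G_t(x-y)u_0(y)\,\d y\ \geq\ c_1 K_{u_0}, \qquad x\in B(0,1),\ t\in[t_1,t_2].$$
This localized bound plays the role of the uniform bound $(\cG u)_t(x)\geq\kappa$ used in Theorem \ref{theo2}.

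I would then apply Walsh's isometry to the mild solution \eqref{mild} to obtain
$$\E|u_t(x)|^2=[(\cG u)_t(x)]^2+\int_0^t\!\!\int_{\R^{2d}} G_{t-s}(x-y_1)G_{t-s}(x-y_2)f(y_1,y_2)\E[\sigma(u_s(y_1))\sigma(u_s(y_2))]\,\d y_1\,\d y_2\,\d s.$$
Using the growth condition $\sigma(z)\geq|z|^{1+\gamma}$ together with non-negativity of $u$, Assumption \ref{color} with $R=1$ to bound $f\geq K_f$ on $B(0,1)^2$, and Jensen's inequality $\E[(u_s(y_1)u_s(y_2))^{1+\gamma}]\geq(\E[u_s(y_1)u_s(y_2)])^{1+\gamma}$, this should produce a Volterra-type inequality for
$$H(t):=\inf_{x_1,x_2\in B(0,1)}\E[u_t(x_1)u_t(x_2)],$$
of the form
$$H(t)\ \geq\ (c_1K_{u_0})^2\,\mathbf{1}_{[t_1,t_2]}(t)\ +\ c_2\int_0^t H(s)^{1+\gamma}\psi(t-s)\,\d s,$$
where $\psi$ is the positive, locally integrable kernel obtained from $\inf_{x\in B(0,1)}\bigl(\int_{B(0,1)} G_{t-s}(x-y)\,\d y\bigr)^{2}$.

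Blow-up is then forced by choosing $K_{u_0}$ large enough. Since $H(s)\geq(c_1K_{u_0})^2$ on $[t_1,t_2]$, for $t>t_2$ the inequality gives
$$H(t)\ \geq\ c_2(c_1K_{u_0})^{2(1+\gamma)}\int_{t_1}^{t_2}\psi(t-s)\,\d s,$$
which can be made to exceed any prescribed threshold by taking $K_{u_0}$ large. Iterating the Volterra inequality then produces blow-up in finite time via the same super-linear Volterra argument used for Theorems \ref{theo2} and \ref{Theo3}. Setting $x_1=x_2=x$ in the definition of $H$ yields $\E|u_t(x)|^2=\infty$ for $x\in B(0,1)$; plugging this back into the Walsh-isometry identity (and using $\E u_s^{2(1+\gamma)}\geq(\E u_s^2)^{1+\gamma}$) propagates the blow-up to every $x\in\R^d$ for all $t\geq t_0$.

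The main obstacle will be showing that the Volterra inequality with this particular kernel $\psi$ actually blows up when driven by the \emph{finite-duration} seed $(c_1K_{u_0})^2\mathbf{1}_{[t_1,t_2]}(t)$ rather than the constant forcing used in Theorem \ref{theo2}; the quantitative control on $\psi$ and on its convolution iterates is essentially the same, but one must verify that a short-time seed is enough to trigger blow-up once $K_{u_0}$ is sufficiently large. A secondary technical point is justifying non-negativity of the solution $u$, which is implicit in the step $\E[\sigma(u_s(y_1))\sigma(u_s(y_2))]\geq\E[u_s(y_1)^{1+\gamma}u_s(y_2)^{1+\gamma}]$.
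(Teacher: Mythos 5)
Your strategy is essentially the paper's: it too reduces the problem to the two-point quantity $\inf_{x,y\in B(0,1)}\E|u_t(x)u_t(y)|$, uses Assumption \ref{color} (through Proposition \ref{prop-kernel}) to extract a superlinear renewal inequality, and propagates blow-up from the ball to every $x\in\R^d$ exactly as in your final step. The one genuine difference is how the seed from Assumption \ref{initial} enters, and it is precisely where your acknowledged gap sits. You keep absolute time, so your renewal inequality carries only the finite-duration seed $(c_1K_{u_0})^2\mathbf{1}_{[t_1,t_2]}(t)$, and you must then argue (your ``main obstacle'') that a short-lived seed triggers blow-up; your proposed fix is an iteration after the window, which is workable but is left unverified. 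The paper sidesteps this entirely by restarting the solution: Proposition \ref{determ} shows $(\cG u)_{t+t_0}(x)\geq cK_{u_0}$ for \emph{all} $t\in(0,t_0]$ (proved via the subordination formula $G_t(\cdot)=\int_0^\infty p(s,\cdot)f_{E_t}(s)\,\d s$, though your direct lower bound on $G_t$ over a compact space--time window via \eqref{heat} is a legitimate and arguably simpler substitute), and then Proposition \ref{prop5.2} discards the stochastic-integral contribution on $[0,t_0]$ and substitutes $S=s-t_0$, so that the shifted function $G(t):=\inf_{x,y\in B(0,1)}\E|u_{t+t_0}(x)u_{t+t_0}(y)|$ satisfies $G(t)\geq cK_{u_0}^2+c_1K_f\int_0^t G(s)^{1+\gamma}\,\d s$ with a \emph{constant} seed, to which Proposition \ref{volterra} applies verbatim: taking $K_{u_0}$ large forces the ODE comparison to blow up inside the window of validity. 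The cleanest closure of your gap is not iteration after $t_2$ but running the comparison \emph{inside} your seed window: for $t\in[t_1,t_2]$ with $t_2-t_1\leq(1/2)^\alpha$ your inequality already gives $H(t)\geq(c_1K_{u_0})^2+c_2\psi_0\int_{t_1}^t H(s)^{1+\gamma}\,\d s$ with $\psi_0>0$ from Proposition \ref{prop-kernel}, whose blow-up time after $t_1$ is of order $(\gamma c_2\psi_0(c_1K_{u_0})^{2\gamma})^{-1}$ and is pushed below $t_2-t_1$ by enlarging $K_{u_0}$ --- which, after reparametrization, is exactly the paper's argument. Finally, your secondary worry about non-negativity of $u$ is moot: the standing growth assumption on $\sigma$ requires $\sigma(z)\geq|z|^{1+\gamma}\geq 0$ for all real $z$, so $\E[\sigma(u_s(y_1))\sigma(u_s(y_2))]\geq\E[|u_s(y_1)u_s(y_2)|^{1+\gamma}]\geq(\E|u_s(y_1)u_s(y_2)|)^{1+\gamma}$ by Jensen with absolute values throughout, which is how the paper (working with $\E|u_t(x)u_t(y)|$ rather than $\E[u_t(x)u_t(y)]$) avoids the issue.
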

It should be noted that the constant $K$ appearing in the above two results need not be the same.
The concept of our method involves obtaining non-linear renewal-type inequalities whose solutions blow up in finite time. We adapt the methods in \cite{Foondun-liu-nane} with crucial changes to suit to the space-time fractional equations. This method is soft and can be adapted to study a wider class of equations.
For the colored-noise case, a crucial quantity we study is   $\E|u_t(x)u_t(y)|$ instead of $\E|u_t(x)|^2$; and a good control of the deterministic term $(\mathcal{G}u)_t(x)$ is crucial in getting the non-existence of the solutions. Our methods depend on crucial  heat kernel estimates for short times and use the fact that we can restart the solution at a later time.
We will explain these methods in the  proof of our results.

Our next  theorem,  extends those of \cite{chow2} , \cite{chow1} and \cite{Foondun-liu-nane}. 
Fix $R>0$.  We will study the equations above  in the  ball $B(0,\,R)$  with Dirichlet boundary conditions. We will need  the following assumption.
\begin{assumption}\label{initial-dirichlet}
We assume that the initial condition $u_0$ is a non-negative function whose support, denoted by $S_{u_0}$ satisfies $B(0,\,R/2)\subset S_{u_0}$ such that $\inf_{x\in B(0,\,R/2)}u_0(x)>\tilde{\kappa}$ for some positive constant $\tilde{\kappa}$.
\end{assumption}

\begin{theorem}\label{Dirichlet}
Fix $R>0$ and consider
\begin{equation}\label{eq:dir}
 \partial^\beta_tu_t(x)=
 -\nu(-\Delta)^{\alpha/2} u_t(x)+I^{1-\beta}[ \sigma(u)\stackrel{\cdot}{F}(t,x)]\quad{t>0}\quad\text{and}\quad x\in B(0,\,R).
 \end{equation}
Here $-(-\Delta)^{\alpha/2}$ denotes the generator of a $\alpha$-stable L\'evy process killed upon exiting the ball $B(0,\,R)$. The noise $\dot F$, when not space-time white noise is taken to be spatially colored with correlation function satisfying all the conditions stated above. Fix $\epsilon>0$, then there exist $t_0>0$ and $K>0$, such that for $K_{u_0}>K$,
\begin{equation*}
\E|u_t(x)|^2=\infty\quad\text{for all}\quad t\geq t_0 \quad\text{and}\quad x\in B(0,\,R-\epsilon).
\end{equation*}
\end{theorem}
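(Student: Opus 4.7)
The plan is to adapt the nonlinear renewal machinery developed for Theorems \ref{energy-white} and \ref{energy-colored} to the killed setting on the ball $B(0,R)$. Write the mild solution in the form
\begin{equation*}
u_t(x)=(\cG^D u_0)_t(x)+\int_0^t\int_{B(0,R)}G^D_{t-s}(x,y)\sigma(u_s(y))F(\d s\,\d y),
\end{equation*}
where $G^D_t(x,y)$ denotes the transition density of the time-changed process $X^D_{E_t}$, with $X^D$ the isotropic $\alpha$-stable process killed upon exiting $B(0,R)$, and $E_t$ the inverse $\beta$-stable subordinator. Applying Walsh's isometry to $\E[u_t(x)u_t(y)]$ (and to $\E|u_t(x)|^2$ in the white-noise case), together with orthogonality of the deterministic and stochastic contributions, the growth condition $\sigma(z)\geq |z|^{1+\gamma}$, and Jensen's inequality for $z\mapsto z^{1+\gamma}$, I obtain a nonlinear Volterra-type inequality
\begin{equation*}
H(t)\geq H_0(t)+C\int_0^t\Phi(t-s)\,H(s)^{1+\gamma}\,\d s,
\end{equation*}
where $H(t):=\inf_{x,y\in B(0,R-\epsilon)}\E|u_t(x)u_t(y)|$, $H_0(t)$ is built from $(\cG^D u_0)_t$, and $\Phi(t-s)$ is the integral of $G^D_{t-s}(x,y_1)G^D_{t-s}(y,y_2)f(y_1,y_2)$ over $B(0,R)^2$.

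Next I would establish the two ingredients that feed this inequality. For the deterministic term, Assumption \ref{initial-dirichlet} and an interior lower bound for $G^D$ on $B(0,R/2)\times B(0,R-\epsilon)$ yield $(\cG^D u_0)_t(x)\geq c_\epsilon\tilde\kappa K_{u_0}$ uniformly in $t$ on some bounded interval $[t_1,t_2]$ and $x\in B(0,R-\epsilon)$. For the stochastic kernel, I would prove an interior lower bound of the form $G^D_t(x,y)\geq c\, G_t(x-y)$ valid for $x,y\in B(0,R-\epsilon/2)$ and $t$ sufficiently small; this follows by combining short-time lower bounds for the killed stable heat kernel (harmonic-correction argument giving that the killing contribution is subdominant away from the boundary) with subordination through $E_t$. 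Pairing this bound with Assumption \ref{color} on the correlation $f$ (respectively the Riesz/white-noise computation), one obtains $\Phi(t-s)\geq c(t-s)^{-\rho}$ for some $\rho<1$ and $t-s$ small, exactly as in the whole-space arguments leading to Theorems \ref{theo2} and \ref{Theo3}.

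With these ingredients, the blow-up follows by a restart-and-renewal argument: choose $t_1>0$ and restart the equation from time $t_1$, at which point $u_{t_1}(x)\geq(\cG^D u_0)_{t_1}(x)\geq c_\epsilon\tilde\kappa K_{u_0}$ on $B(0,R-\epsilon)$; by the Markov property the restarted solution satisfies a renewal inequality
\begin{equation*}
H(t)\geq A_0+c\int_{t_1}^t(t-s)^{-\rho}H(s)^{1+\gamma}\,\d s,\qquad A_0\asymp K_{u_0}^2,
\end{equation*}
which, because $\rho<1$ and $\gamma>0$, admits no finite solution beyond some $t_\ast(A_0)<\infty$; taking $K_{u_0}\geq K$ with $K$ large enough forces $t_\ast\leq t_0$, giving $\E|u_t(x)|^2=\infty$ for $t\geq t_0$ and $x\in B(0,R-\epsilon)$. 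The main obstacle is the interior lower bound on the subordinated killed heat kernel in the second step: the whole-space estimates used in the previous theorems do not transfer verbatim, and one must quantify both the boundary correction (harmonic piece) and the probability that the inverse subordinator $E_t$ is in the regime where the short-time kernel comparison is valid, so that the quantitative dependence of $K$ on $\epsilon$ (through the distance to $\partial B(0,R)$) can be tracked explicitly.
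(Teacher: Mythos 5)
Your architecture coincides with the paper's: lower-bound the two-point function $\E|u_t(x)u_t(y)|$ through the Walsh isometry for the killed kernel $G_B$, feed in the growth condition on $\sigma$ and Jensen's inequality, extract a nonlinear renewal inequality, and finish with one more pass through the mild formulation to get $\E|u_t(x)|^2=\infty$ for $t\geq t_0$ and $x\in B(0,R-\epsilon)$. Three differences are worth recording. First, the interior kernel estimate you single out as the main obstacle is exactly Proposition \ref{density-lower-bound-killed-fractional} (Proposition 2.1 in \cite{Foondun-mijena-nane}), namely $G_B(t,x,y)\geq Ct^{-\beta d/\alpha}$ for $x,y\in B(0,R-\epsilon)$, $|x-y|<t^{\beta/\alpha}$ and small $t$; the paper cites it rather than reproving it, so your harmonic-correction-plus-subordination sketch, while plausible, is redundant. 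Second, integrating that bound over the sets $\mathcal{A}_i=\{y_i:|x_i-y_i|\leq (t-s)^{\beta/\alpha}\}$, whose volume is of order $(t-s)^{\beta d/\alpha}$, against Assumption \ref{color} yields a \emph{constant} lower bound $cK_f$ for the double kernel-correlation integral (Proposition \ref{prop-kernel-killed}), not a singular kernel $(t-s)^{-\rho}$; the paper's renewal inequality is therefore $H(t)\geq c_1\kappa^2+c_2K_f\int_0^tH(s)^{1+\gamma}\,\d s$ for $t\leq (R/4)^{\alpha}$, solved by elementary ODE comparison, with blow-up time of order $\kappa^{-2\gamma}$, made smaller than any prescribed $t_0$ by taking the initial datum large (the largeness hypothesis enters through Assumption \ref{initial-dirichlet}, with $\tilde\kappa$ playing the role your $K_{u_0}>K$ plays). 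Third, the paper takes the infimum defining $H$ over $B(0,R/2)$, where Proposition \ref{determ-dirich} bounds the deterministic part below, not over $B(0,R-\epsilon)$; the larger ball enters only through the kernel estimate (with $\epsilon=R/4$) and through the final step, where positivity of $G_B(t-s,x,\cdot)$ for $x\in B(0,R-\epsilon)$ against the region where $H=\infty$ concludes. Your choice of infimum over $B(0,R-\epsilon)$ would force you to control $(\cG_Bu)_t$ near the boundary, which Assumption \ref{initial-dirichlet} does not give you.

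The one step that would fail as written is the restart ``by the Markov property.'' The solution of this time-fractional equation is not Markov in time --- the memory enters through the inverse subordinator $E_t$ and the fractional integral $I^{1-\beta}$ --- and the pathwise inequality $u_{t_1}(x)\geq (\cG_B u_0)_{t_1}(x)$ is false, since the stochastic convolution is signed; both claims must be replaced by statements at the level of second moments, where the cross terms vanish by the isometry. The paper avoids restarting altogether in this proof: Assumption \ref{initial-dirichlet} already yields $(\cG_Bu)_t(x)\geq c$ for small $t$ on $B(0,R/2)$ via Proposition \ref{determ-dirich}, so the renewal inequality holds from time zero. Where a time shift is used elsewhere (Propositions \ref{prop5.1} and \ref{prop5.2}), it is implemented by discarding the portion of the isometry integral over $[0,t_0]$ and substituting $S=s-t_0$, which uses only the convolution structure of the kernel and no Markov property. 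If you replace your restart by that change of variables, or simply drop it, your proposal becomes a correct proof along the paper's lines.
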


 Fujita in \cite{Fujita} showed that  the only global solution to \begin{equation*}
\partial_t u_t(x)=\Delta u_t(x)+u(x)^{1+\lambda}\quad\text{for}\quad t>0\quad x\in \R^d,
\end{equation*}
with initial condition $u_0$ and $\lambda>0$ is the trivial one for $\lambda <2/d.$ In the case $\lambda>2/d,$ the global solution exist for small enough $u_0$.  A good way to look at this result is that for large $\lambda$, the quantity $u^{1+\lambda}$ becomes much smaller when the initial condition is small and the dissipative effect of the Laplacian prevents the solution to grow too big for blow-up to happen. But, when $\lambda$ is close to zero, regardless of the size of the initial condition, the dissipative effect of the Laplacian cannot prevent blow up of the solution. For the reaction-diffusion type space-time fractional stochastic equations, we work with the first moment  $ \E(|u_t(x)|)$. There is still an interplay between the dissipative effect of the operator and the forcing term  and we are able to shed light only on part of the true picture.  We show that if the initial condition is large enough then there is no global solution. It might very well be just like for the deterministic case, if the non-linearity is high enough, then for small initial condition, there exist global solutions. See the survey papers  \cite{DL, Levine} for blow-up results for the deterministic equations.


Next we want to state our non-existence results for  reaction-diffusion type equations.

 \begin{assumption}\label{drift-polynomial}
The function $b$ is  locally Lipschitz satisfying the following growth condition. There exist a $\eta>0$ such that
\begin{equation}\label{growth}
b(x)\geq |x|^{1+\eta}\quad\text{for all}\quad x\in \R^d.
\end{equation}
\end{assumption}

\begin{theorem}\label{thm-additional-drift}
Suppose that $\sigma $ is globally Lipschitz and $b $ satisfies the conditions in Assumption \ref{drift-polynomial}.
Consider
\begin{equation}\label{eq:additional source}
 \partial^\beta_tu_t(x)=
 -\nu(-\Delta)^{\alpha/2} u_t(x)+I^{1-\beta}[ b(u_t(x))+\sigma(u)\stackrel{\cdot}{F}(t,x)]\quad{t>0}\quad\text{and}\quad x\in \mathbb{R}^d.
 \end{equation}
Here $-(-\Delta)^{\alpha/2}$ denotes the generator of  $\alpha$-stable L\'evy  process.  The noise $\dot F$, when not space-time white noise is taken to be spatially colored. 
{
 Then \eqref{eq:additional source} has no random field solution in the following  cases:

\noindent {\bf 1: } $\inf_{x\in \R^d}u_0(x)>\kappa>0$ and  $\eta>0$.


\noindent {\bf 2: } $||u_0||_{L^1(\R^d)}>0,$ and $\beta d\eta/\alpha<1$.


}
\end{theorem}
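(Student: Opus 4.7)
The plan is to take expectations in the mild formulation, exploit the mean–zero property of the Walsh stochastic integral, and reduce both cases to a nonlinear integral (renewal) inequality for a suitable functional of $\E u_t(x)$. Assume, toward a contradiction, that a random field solution $u$ exists globally. Since $\sigma$ is globally Lipschitz the stochastic integral is a centered martingale (on the time interval where the moment bound holds), and taking expectations in the analogue of \eqref{mild} for \eqref{eq:additional source} yields
\begin{equation*}
\E u_t(x) = (\mathcal{G}u)_t(x) + \int_0^t\int_{\R^d} G_{t-s}(x-y)\,\E b(u_s(y))\,\d y\,\d s.
\end{equation*}
A standard truncation/comparison argument (using $u_0\geq 0$, local Lipschitz continuity, and the fact that \eqref{growth} forces $b\geq 0$ on $[0,\infty)$) shows $u_t(x)\geq 0$ almost surely, so Jensen's inequality applied to the convex function $r\mapsto r^{1+\eta}$ gives $\E b(u_s(y))\geq (\E u_s(y))^{1+\eta}$.

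For Case 1, the hypothesis $\inf_{x\in\R^d} u_0(x)\geq \kappa>0$ combined with the fact that $G_t(\cdot)$ is a probability density yields $(\mathcal{G}u)_t(x)\geq \kappa$ for every $x$. Setting $v(t):=\inf_{x\in\R^d}\E u_t(x)$ and using $\int_{\R^d}G_{t-s}(x-y)\,\d y=1$, I obtain
\begin{equation*}
v(t)\;\geq\;\kappa+\int_0^t v(s)^{1+\eta}\,\d s,
\end{equation*}
and comparison with the ODE $V'=V^{1+\eta}$, $V(0)=\kappa$, forces $v(t)=\infty$ for all $t\geq t_0:=(\eta\kappa^\eta)^{-1}$, contradicting \eqref{moments}. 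This parallels the argument used in Theorem~\ref{theo1}, but with $b$ playing the role there played by $\sigma$.

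For Case 2 the free term no longer has a positive constant lower bound, so I would substitute the two-sided heat kernel estimate for the subordinated kernel $G_t$ from Section~2 of the paper: there are constants $c_1,c_2>0$ such that for $|x|\leq c_2\,t^{\beta/\alpha}$ and $t$ large enough,
\begin{equation*}
(\mathcal{G}u)_t(x)\;\geq\;c_1\,K_{u_0}\,t^{-\beta d/\alpha}.
\end{equation*}
Combined with the Jensen bound, iterating once into the integral equation produces a nonlinear renewal inequality of Fujita type: writing $F(t):=\int_{B(0,1)}\E u_t(x)\,\d x$ (or equivalently working pointwise on a ball growing like $t^{\beta/\alpha}$) and using the lower kernel estimate on $G_{t-s}$ one arrives at an inequality of the form
\begin{equation*}
F(t)\;\geq\;c\,t^{-\beta d/\alpha}+c\int_0^t (t-s)^{-\beta d/\alpha}\,F(s)^{1+\eta}\,\d s,
\end{equation*}
whose solutions cannot exist globally precisely when $\beta d\eta/\alpha<1$ — the fractional analogue of the classical Fujita exponent $\eta<2/d$. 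Alternatively, one can use a ``restart'' strategy: the kernel estimate and the feedback from the $b$–term are used to show that at some sufficiently large time $t_1$ the infimum $\inf_{x\in B(0,R)}\E u_{t_1}(x)$ exceeds any prescribed constant, and then Case~1 is applied to the equation restarted at $t_1$.

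The main obstacle is Case~2. The difficulty is to extract the correct Fujita‐type renewal inequality from the mild formulation while matching the space–time scaling of the subordinated kernel, since Jensen's inequality must be applied against a kernel that is itself a probability density only in space, not in $(s,y)$. The condition $\beta d\eta/\alpha<1$ arises exactly from balancing the decay $t^{-\beta d/\alpha}$ of $(\mathcal{G}u)_t$ against the superlinear feedback $F^{1+\eta}$; showing this balance rigorously — either by a self‐similar test function argument or via the restart reduction to Case~1 — is the technical heart of the proof. Once the renewal inequality is in place, blow‐up in finite time follows from standard nonlinear Volterra inequalities, again contradicting \eqref{moments}.
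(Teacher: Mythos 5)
Your Case 1 is correct and coincides with the paper's own argument: take expectations in the mild formulation (the stochastic integral is centered since $\sigma$ is globally Lipschitz and the moment bound holds by the contradiction hypothesis), use that $G_{t-s}(\cdot)$ is a spatial probability density together with Jensen's inequality, and compare $F(t)=\inf_{x\in\R^d}\E|u_t(x)|$ with the ODE, blowing up at $t_0=\kappa^{-\eta}/\eta$, contradicting the random field bound. One remark: your detour through a.s.\ nonnegativity of $u$ is both unjustified (a "standard truncation/comparison argument" for positivity is not available here and the paper never asserts $u\geq 0$) and unnecessary, since Assumption \ref{drift-polynomial} gives $b(x)\geq |x|^{1+\eta}$ for \emph{all} real $x$, so $\E b(u_s(y))\geq \E|u_s(y)|^{1+\eta}\geq (\E|u_s(y)|)^{1+\eta}$ with no sign information on $u$.

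Case 2 contains a genuine gap. The renewal inequality you propose, $F(t)\geq c\,t^{-\beta d/\alpha}+c\int_0^t(t-s)^{-\beta d/\alpha}F(s)^{1+\eta}\,\d s$, forces finite-time blow-up via the paper's Volterra machinery (Proposition \ref{rem-volterra} and the remark following it) only under $(1+\eta)\beta d/\alpha<1$, which is strictly stronger than the stated hypothesis $\eta\beta d/\alpha<1$: for instance $\beta=1/2$, $\alpha=2$, $d=1$, $\eta=7/2$ satisfies the theorem's Case 2 but falls outside your inequality's range, and the bootstrapping device of Remark \ref{rem:gamma-greater-than1} is unavailable because your free term decays, so $F$ has no uniform positive lower bound. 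Your claim that solutions of this inequality "cannot exist globally precisely when $\beta d\eta/\alpha<1$" is exactly the Fujita-type assertion to be proved, not a standard Volterra fact. The fallback restart strategy is circular for the same reason: Case 1 requires the mean bounded below by a constant uniformly over $\R^d$ (on a fixed ball the kernel mass $\int_{B}G_{t-s}(x-y)\,\d y$ decays like $(t-s)^{-\beta d/\alpha}$ for large $t-s$), and showing that $\|u_0\|_{L^1(\R^d)}>0$ plus the drift feedback pushes $\inf_{x\in B}\E u_{t_1}(x)$ above an arbitrary constant throughout the full subcritical range is precisely the content of the Fujita theorem. The paper closes Case 2 by a different move that your sketch never supplies: after taking expectations it observes that $V(t,x)=\E|u_t(x)|$ is a supersolution of the deterministic equation $\partial_t^\beta V=-\nu(-\Delta)^{\alpha/2}V+I^{1-\beta}[V^{1+\eta}]$ (reversing the fractional Duhamel principle) and invokes the deterministic Fujita-type result, Theorem \ref{thm-additional-drift-pde} from \cite{asogwa-foondun-jebessa-nane-2017}, which covers the entire range $0<\eta\leq \alpha/(\beta d)$ whenever $V_0\not\equiv 0$; the resulting blow-up of $\E|u_t(x)|$ contradicts the random field bound. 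Since you explicitly defer this step as "the technical heart," Case 2 stands unproven in your proposal.
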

When $\beta=1$, a version of this theorem  with $\alpha=2$ was considered by  Chow  \cite{chow1} and a version with $\alpha\in (1,2)$ was considered by Foondun and Parshad \cite{foondun-parshad}.

The mild solution of equation \eqref{eq:additional source} is given in the sense of Walsh \cite{walsh} as follows:

\begin{equation}\label{mild-additional-drift}
u_t(x)=
(\cG u)_t(x)+ \int_{\R^d}\int_0^t b(u_s(y))G_{t-s}(x-y)\d s\,\d y+\int_{\R^d}\int_0^t G_{t-s}(x-y)\sigma(u_s(y))F(\d s\,\d y).
\end{equation}

{
\begin{remark}We can also get a blow up for the following equation  that was considered by Chen et al \cite{le-chen-2019} for any $\gamma>0$  and $d< 2\alpha+\frac\alpha\beta \min (2\gamma-1, 0)$.
\begin{equation}\label{eq:white-diff-frac-int}
\partial^\beta_tu_t(x)=-\nu(-\Delta)^{\alpha/2} u_t(x)+I^{\gamma}[ b(u_t(x)) +\sigma(u)\stackrel{\cdot}{F}(t,x)]\quad{t>0}\quad\text{and}\quad x\in \R^d.
\end{equation}
In this case the corresponding mild solution in the sense of Walsh \cite{walsh} is given by
\begin{equation}\label{mild}
u_t(x)=
(\cG u)_t(x)+ \int_{\R^d}\int_0^t b(u_s(y))H_{t-s}(x-y)\d s\,\d y+\int_{\R^d}\int_0^t H_{t-s}(x-y)\sigma(u_s(y))F(\d s\,\d y),
\end{equation}
where $H_t(x)$ is given by the time  fractional derivative of $G_t(x)$. Using Equation (4.7)  in \cite{le-chen-2019} for $\xi=0$ we get $\int_{\R^d} H_{t}(x)dx=t^{\beta+\gamma-1}$. We can show finite time blow up as in the proof of Theorem \ref{thm-additional-drift}.

Here the nonlinear renewal inequality becomes
$$
F(s)\geq C+C_1\int_0^tF(s)^{1+\eta}(t-s)^{\beta+\gamma-1}ds
$$
where  $F(s)=\inf_{x\in \R^d}\E |u_s(x)|^2$. A similar argument as in the proof of  Remark \ref{remark-general-gamma} can be used to get a blow-up result.

\end{remark} }



\begin{theorem}\label{thm-Dirichlet-additional-drift}
Suppose that $\sigma $ is globally Lipschitz and $b $ satisfies the conditions in Assumption \ref{drift-polynomial}.
Fix $R>0$ and consider
\begin{equation}\label{eq:dir-additional-source}
 \partial^\beta_tu_t(x)=
 -\nu(-\Delta)^{\alpha/2} u_t(x)+I^{1-\beta}[ b(u_t(x))+\sigma(u)\stackrel{\cdot}{F}(t,x)]\quad{t>0}\quad\text{and}\quad x\in B(0,\,R).
 \end{equation}
Here $-(-\Delta)^{\alpha/2}$ denotes the generator of  $\alpha$-stable  L\'evy process killed upon exiting the ball $B(0,\,R)$. The noise $\dot F$, when not space-time white noise is taken to be spatially colored. 
Let $\phi_1$ be the first eigenfunction of the fractional Laplacian with Dirichlet exterior boundary condition  in the ball $B=B(0,R)$.
{
Then \eqref{eq:dir-additional-source} has no random field solution in the following cases:



\noindent {\bf 1: } $\int_{B}u_0(x)\phi_1(x):=K_{u_0, \phi_1}>0,$ and $\beta (1+\eta)\leq 1$

\noindent {\bf 2: } $\beta (1+\eta)>1$ and for large $K_{u_0, \phi_1}>0$.

 }

\end{theorem}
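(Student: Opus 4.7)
The approach is to project the mild formulation \eqref{mild-additional-drift} (restricted to the ball) onto the first Dirichlet eigenfunction. Let $(\mu_1,\phi_1)$ be the first eigenpair of $-(-\Delta)^{\alpha/2}$ killed on exit from $B=B(0,R)$, with $\phi_1>0$ on $B$ and $\mu_1>0$, and set
$$F(t):=\int_B \E\bigl[u_t(x)\bigr]\phi_1(x)\,dx.$$
Taking expectations in the mild formulation (the Walsh stochastic integral vanishes after a standard localization, using Assumption \ref{drift-polynomial} and the Lipschitz hypothesis on $\sigma$), and using the identity
$$\int_B G_t(x,y)\phi_1(x)\,dx = E_\beta(-\nu\mu_1 t^\beta)\,\phi_1(y),$$
valid because $E_\beta(-\nu\mu_1 t^\beta)\phi_1$ solves the homogeneous Caputo/Dirichlet equation with data $\phi_1$, gives
\begin{equation*}
F(t) = E_\beta(-\nu\mu_1 t^\beta)\,K_{u_0,\phi_1} + \int_0^t E_\beta(-\nu\mu_1(t-s)^\beta)\int_B \E[b(u_s(y))]\,\phi_1(y)\,dy\,ds.
\end{equation*}

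I would then combine the pointwise lower bound $b(u)\geq |u|^{1+\eta}$ (together with nonnegativity of $u_s$, by a comparison argument in the spirit of \cite{Foondun-liu-nane}) with Jensen's inequality applied to the probability measure $\phi_1(y)/\|\phi_1\|_{L^1(B)}\,dy$ to obtain $\int_B \E[b(u_s(y))]\phi_1(y)\,dy\geq C\,F(s)^{1+\eta}$, yielding the nonlinear fractional renewal inequality
\begin{equation*}
F(t)\geq E_\beta(-\nu\mu_1 t^\beta)\,K_{u_0,\phi_1} + C\int_0^t E_\beta(-\nu\mu_1(t-s)^\beta)\,F(s)^{1+\eta}\,ds,
\end{equation*}
equivalently the fractional differential inequality $\partial_t^\beta F \geq -\nu\mu_1 F + C F^{1+\eta}$ with $F(0)=K_{u_0,\phi_1}$.

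For Case 2 ($\beta(1+\eta)>1$) the linear dissipation dominates unless $F$ exceeds the threshold $F_c:=(\nu\mu_1/C)^{1/\eta}$, so requiring $K\geq F_c$ forces $\partial_t^\beta F(0)>0$; monotonicity propagates and a Volterra blow-up lemma adapted to the Mittag-Leffler kernel (in the spirit of \cite{Foondun-liu-nane}) produces a finite blow-up time. For Case 1 ($\beta(1+\eta)\leq 1$) one instead exploits the slow, polynomial decay $E_\beta(-\nu\mu_1 r^\beta)\sim Cr^{-\beta}$ to start the iteration from $F(t)\geq c\,K_{u_0,\phi_1}\,t^{-\beta}$; substituting repeatedly back into the renewal inequality and using the Beta integral $\int_0^t (t-s)^{-\beta}s^a\,ds\simeq t^{1-\beta+a}$ produces a chain of polynomial lower bounds $F(t)\geq c_k t^{a_k}$ with the recursion $a_{k+1}=1-\beta+(1+\eta)a_k$, whose fixed point $a_*=(\beta-1)/\eta$ satisfies $-\beta>a_*$ precisely when $\beta(1+\eta)<1$. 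Hence $a_k\to+\infty$, so at some finite time $t_1$ one has $F(t_1)>F_c$, reducing Case 1 to Case 2 by restarting the equation at $t_1$ and forcing blow-up regardless of the size of the positive constant $K_{u_0,\phi_1}$.

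The main obstacle is the iteration in Case 1, in particular the boundary value $\beta(1+\eta)=1$, where the recursion is marginal and a logarithmic correction or a direct Volterra comparison is required; the novelty relative to the classical Kaplan eigenfunction argument on bounded domains (which always needs large data) is precisely the exploitation of the polynomial Mittag-Leffler decay, and this must be quantified carefully when propagating constants $c_k$ through the successive restarts. Justifying the exchange of expectation and integration against $\phi_1$ for the Walsh integral in the Dirichlet setting is a secondary technicality handled by standard localization.
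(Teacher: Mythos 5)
Your skeleton coincides with the paper's: assume a random field solution (so $Q(t)=\int_B\E[|u_t(x)|]\phi_1(x)\,\d x$ stays bounded, by $\E|u_t(x)|\leq(\E|u_t(x)|^2)^{1/2}$), take expectations in the mild formulation so the Walsh integral drops out, project onto $\phi_1$ via the identity $\int_B G_B(t,x,y)\phi_1(x)\,\d x=E_\beta(-\mu_1 t^\beta)\phi_1(y)$ coming from the eigenfunction expansion of $G_B$, apply Jensen against the probability measure $\phi_1(y)\,\d y/\|\phi_1\|_{L^1(B)}$, and arrive at the same nonlinear renewal inequality; note that Assumption \ref{drift-polynomial} gives $b(x)\geq|x|^{1+\eta}$ for \emph{all} $x$, so your auxiliary nonnegativity/comparison step for $u_s$ is unnecessary. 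Two points, however, do not survive scrutiny. First, your claimed equivalence with the fractional ODE $\partial_t^\beta F\geq-\nu\mu_1F+CF^{1+\eta}$ is inaccurate: in \eqref{eq:dir-additional-source} the drift sits under $I^{1-\beta}$, which is precisely why the Duhamel kernel acting on the nonlinearity is $E_\beta(-\mu_1(t-s)^\beta)$ rather than $(t-s)^{\beta-1}E_{\beta,\beta}(-\mu_1(t-s)^\beta)$; the correct differential form is $\partial_t^\beta F\geq-\nu\mu_1F+I^{1-\beta}[CF^{1+\eta}]$, so your Case 2 threshold $(\nu\mu_1/C)^{1/\eta}$ must be re-derived on the renewal inequality itself. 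Second --- the genuine gap --- your Case 1 iteration $a_{k+1}=1-\beta+(1+\eta)a_k$ stalls exactly at the boundary $\beta(1+\eta)=1$, where the starting exponent $a_0=-\beta$ \emph{is} the fixed point $a_*=(\beta-1)/\eta$; since the theorem's Case 1 includes equality, your argument as written does not cover it, and "a logarithmic correction or a direct Volterra comparison is required" is an acknowledgement, not a proof. You also still owe the propagation of the constants $c_k$ through the restarts, which you flag but do not carry out.

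The paper sidesteps both the iteration and the critical case with one observation: since $E_\beta$ is decreasing, $E_\beta(-\mu_1(t-s)^\beta)\geq E_\beta(-\mu_1t^\beta)\geq ct^{-\beta}$ for $t\geq 1$ by the two-sided bound \eqref{eq:ML-bounds}, so the kernel pulls out of the convolution entirely; substituting $P(t)=t^\beta Q(t)$ then yields $P(t)\geq C_3+C_4\int_1^tP(s)^{1+\eta}s^{-\beta(1+\eta)}\,\d s$, and comparison with the separable ODE $P'/P^{1+\eta}=C_4 s^{-\beta(1+\eta)}$ gives finite-time blowup for \emph{any} $C_3>0$ precisely when $\int_1^\infty s^{-\beta(1+\eta)}\,\d s=\infty$, i.e. $\beta(1+\eta)\leq1$ (equality handled for free via the logarithm), and for large $C_3$ when $\beta(1+\eta)>1$ --- recovering both cases of the theorem in a few lines and contradicting the boundedness of $Q$. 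I recommend replacing your polynomial-escape iteration with this substitution: your scheme does work for $\beta(1+\eta)<1$ after the constant bookkeeping, but it buys nothing here and fails at equality.
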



The mild solution of equation \eqref{eq:dir-additional-source} is given in the sense of Walsh \cite{walsh} as follows:

\begin{equation}\label{eq:dir-mild-additional-drift}
\begin{split}
u_t(x)&=
(\cG_B u)_t(x)+ \int_{B(0,R)}\int_0^t b(u_s(y))G_B({t-s}, x,y)\d s\,\d y\\
&+\int_{B(0,R)}\int_0^t G_B({t-s},x,y)\sigma(u_s(y))F(\d s\,\d y),
\end{split}
\end{equation}
where $G_B({t}, x,y)$ is the density of $X_{E_t}$ killed on the exterior of $B$.

In this paper we will denote the ball of radius $R$ by $B=B(0,\,R)$. For $x\in \R^d$, $|x|$ will be the magnitude of $x$. The letter $c$ and $c^\ast$ with or without subscripts will denote a constant whose value is not relevant.

The outline of the article is the following. Preliminary notions  and results needed for the proofs of the main results are presented in Section \ref{sect-prelim}. Section \ref{sect-3} contains the proofs of Theorem \ref{theo1} and \ref{theo2}. Theorem \ref{Theo3} is proved in Section \ref{sect-4}.  In Section \ref{sect-5} we give the proofs of Theorems \ref{energy-white}, \ref{energy-colored}, and \ref{Dirichlet}. Finally, in section \ref{sect-drift} we present the proof of the remaining results. We list a couple of results we need  in the appendix, Section \ref{appendix}.



\section{Preliminaries}\label{sect-prelim}

Now we are ready to give results that are used in the proof of our  main results. Let $\alpha\in (0,2)$. Let $X_t$ denote a symmetric $\alpha$-stable L\'evy  process with density function denoted by $p(t,\,x)$. This is characterized through the Fourier transform which is given by

\begin{equation}\label{Eq:F_pX}
\widehat{p(t,\,\xi)}=e^{-t\nu|\xi|^\alpha}.
\end{equation}

Let $D=\{D_r,\,r\ge0\}$ denote a $\beta$-stable subordinator with $\beta\in (0,1)$ and $E_t$ be its first passage time. It is known that the density of the time changed process $X_{E_t}$ is given by  $G_t(x)$. By conditioning, we have

\begin{equation}\label{Eq:Green1}
G_t(x)=G(t,x)=\int_{0}^\infty p(s,\,x) f_{E_t}(s)\d s,
\end{equation}
where
\begin{equation}\label{Etdens0}
f_{E_t}(x)=t\beta^{-1}x^{-1-1/\beta}g_\beta(tx^{-1/\beta}),
\end{equation}
where $g_\beta(\cdot)$ is the density function of $D_1$ and is infinitely differentiable on the entire real line, with $g_\beta(u)=0$ for $u\le 0$. Moreover,
\begin{equation}\label{Eq:gbeta0}
g_\beta(u)\sim K(\beta/u)^{(1-\beta/2)/(1-\beta)}\exp\{-|1-\beta|(u/\beta)^{\beta/(\beta-1)}\}\quad\mbox{as}\,\, u\to0+,
\end{equation}
and
\begin{equation}\label{Eq:gbetainf}
g_\beta(u)\sim\frac{\beta}{\Gamma(1-\beta)}u^{-\beta-1} \quad\mbox{as}\,\, u\to\infty.
\end{equation}
We will also need the following estimates given in Lemma 2.1 in \cite{Foondun-nane},
 \begin{equation}\label{heat}
 c_1\bigg(t^{-\beta d/\alpha}\wedge \frac{t^\beta}{|x|^{d+\alpha}}\bigg)\leq G_t(x),
 \end{equation}
 and
 \begin{equation}\label{heat2}
 G_t(x)\leq c_2\bigg(t^{-\beta d/\alpha}\wedge \frac{t^\beta}{|x|^{d+\alpha}}\bigg),
 \end{equation}
 when $\alpha > d.$

Let $D\subset \rd$ be a bounded domain.
Let  $p_D(t,x,y)$ denote the heat kernel of the equation \eqref{eq:white} when $\beta=1$ and $\sigma=0$. This is the space fractional diffusion equation with Dirichlet exterior boundary conditions. A well known fact is that
\begin{equation}\label{upper-bound-killed-stable}
p_D(t,x,y)\leq p(s,x,y)\ \ \mathrm{for \ all}\  x, y\ \in D, t>0.
\end{equation}

Let $G_{D}(t,x,y)$ denote the heat kernel of the equation \eqref{eq:white} when $\sigma=0$. This is the space-time fractional diffusion equation with Dirichlet exterior boundary conditions.
 Using the representation from Meerschaert et al.  \cite{cmn-12} and \cite{mnv-09}
$$
G_D(t,x,y)=\int_0^\infty p_D(s,x,y)f_{E_t}(s)ds.
$$
and using equation \eqref{upper-bound-killed-stable}
we get
\begin{equation}\label{bounded-free-upper-bound}
G_D(t,x,y)\leq G(t,x,y) = G_t(x-y)\ \ \mathrm{for \ all}\  x, y\ \in D, t>0.
\end{equation}


We need the $L^2$-norm of the heat kernel given by the next Lemma.
\begin{lemma} [ Lemma 1 in \cite{nane-mijena-2014}]\label{Lem:Green1} Suppose that $d < 2\alpha$, then
\begin{equation}\label{Eq:Greenint}
\int_{{\R^d}}G^2_t(x)\d x  =C^\ast t^{-\beta d/\alpha},
\end{equation}
where the constant $C^{\ast}$ is given by
\begin{equation*}
C^\ast = \frac{(\nu )^{-d/\alpha}2\pi^{d/2}}{\alpha\Gamma(\frac d2)}\frac{1}{(2\pi)^d}\int_0^\infty z^{d/\alpha-1} (E_\beta(-z))^2 d z.
\end{equation*}
Here $E_\beta(x)$ is the Mittag-Leffler function defined by
\begin{equation}\label{ML-function}
E_\beta(x) = \sum_{k=0}^\infty\frac{x^k}{\Gamma(1+\beta k)}.
\end{equation}
\end{lemma}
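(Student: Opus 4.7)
The plan is to compute $\int_{\R^d} G_t^2(x)\,\d x$ via Plancherel's theorem, so the first step is to identify the Fourier transform of $G_t$. Starting from the subordination formula \eqref{Eq:Green1}, $G_t(x)=\int_0^\infty p(s,x)f_{E_t}(s)\,\d s$, Fubini and \eqref{Eq:F_pX} give
\begin{equation*}
\widehat{G_t}(\xi)=\int_0^\infty e^{-\nu s|\xi|^\alpha}f_{E_t}(s)\,\d s=\E\bigl[e^{-\nu|\xi|^\alpha E_t}\bigr].
\end{equation*}
The Laplace transform of the inverse $\beta$-stable subordinator is a standard object equal to the Mittag-Leffler function, so $\widehat{G_t}(\xi)=E_\beta(-\nu|\xi|^\alpha t^\beta)$. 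Since $G_t\in L^1\cap L^2$ (the latter is what we are verifying, but the decay bounds \eqref{heat2} for $d<2\alpha$ suffice a priori), Plancherel yields
\begin{equation*}
\int_{\R^d}G_t^2(x)\,\d x=\frac{1}{(2\pi)^d}\int_{\R^d}E_\beta(-\nu|\xi|^\alpha t^\beta)^2\,\d\xi.
\end{equation*}

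Next I would pass to spherical coordinates, using the surface area $\frac{2\pi^{d/2}}{\Gamma(d/2)}$ of the unit sphere, and then perform the substitution $z=\nu t^\beta \rho^\alpha$, i.e.\ $\rho=(z/(\nu t^\beta))^{1/\alpha}$, to strip out the $t$-dependence:
\begin{equation*}
\int_{\R^d}E_\beta(-\nu|\xi|^\alpha t^\beta)^2\,\d\xi=\frac{2\pi^{d/2}}{\Gamma(d/2)}\cdot\frac{(\nu t^\beta)^{-d/\alpha}}{\alpha}\int_0^\infty z^{d/\alpha-1}E_\beta(-z)^2\,\d z.
\end{equation*}
Assembling the constants recovers exactly the displayed expression for $C^\ast$ together with the asserted power $t^{-\beta d/\alpha}$.

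The one analytic point that needs care—and is the only real obstacle—is verifying that the dimensionless integral $\int_0^\infty z^{d/\alpha-1}E_\beta(-z)^2\,\d z$ is finite, because this is exactly where the hypothesis $d<2\alpha$ enters. Near $z=0$ we have $E_\beta(-z)\to 1$ and the factor $z^{d/\alpha-1}$ is integrable since $d/\alpha>0$. Near $z=\infty$ we use the classical asymptotic $E_\beta(-z)\sim 1/(\Gamma(1-\beta)z)$, so the integrand behaves like a constant multiple of $z^{d/\alpha-3}$, which is integrable at infinity iff $d/\alpha-3<-1$, i.e.\ iff $d<2\alpha$. With both endpoints controlled the computation is complete, and the exact constant $C^\ast$ matches the one in the statement.
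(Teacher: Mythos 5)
Your proof is correct and is essentially the argument behind the cited Lemma~1 of Mijena--Nane \cite{nane-mijena-2014}: one shows $\widehat{G_t}(\xi)=\E\bigl[e^{-\nu|\xi|^\alpha E_t}\bigr]=E_\beta(-\nu|\xi|^\alpha t^\beta)$, applies Plancherel with the paper's unnormalized Fourier convention (hence the $(2\pi)^{-d}$ in $C^\ast$), passes to polar coordinates, substitutes $z=\nu t^\beta\rho^\alpha$, and uses the tail asymptotic $E_\beta(-z)\sim 1/(\Gamma(1-\beta)z)$, which is precisely where $d<2\alpha$ enters. One small repair: the bound \eqref{heat2} is stated in the paper only for $\alpha>d$, so it does not give $G_t\in L^2$ a priori on the full range $\alpha\le d<2\alpha$; instead note that $G_t\in L^1$ (it is a probability density) and $\widehat{G_t}\in L^2$ by your explicit computation, and the standard fact that $f\in L^1$ with $\hat f\in L^2$ implies $f\in L^2$ together with the Plancherel identity closes this point without any a priori kernel estimate.
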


The next proposition  is crucial in proving the lower bounds in Theorem \ref{Dirichlet}.

\begin{proposition}[Proposition 2.1 in \cite{Foondun-mijena-nane}]\label{density-lower-bound-killed-fractional}
Fix $\epsilon>0$, then there exists $t_0>0$ such that for all $x,y\in B(0, R-\epsilon)$ and for all $t<t_0$ and  $|x-y|<t^{\beta/\alpha}$ we have
$$
G_B(t,x,y)\geq C t^{-\beta d/\alpha},
$$
for some constant $C>0$.

\end{proposition}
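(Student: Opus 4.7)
The plan is to exploit the subordination representation
\[
G_B(t,x,y) = \int_0^\infty p_B(s,x,y)\, f_{E_t}(s)\, ds,
\]
where $p_B$ is the Dirichlet heat kernel of the isotropic $\alpha$-stable process killed on exiting $B = B(0,R)$, and $f_{E_t}$ is given by \eqref{Etdens0}. I would localise this integral to the time window $s \in [t^\beta/2,\, t^\beta]$, on which both factors admit good lower bounds, since this is the natural scale on which the inverse-stable density concentrates.

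For the density $f_{E_t}$, the change of variables $u = t s^{-1/\beta}$ maps the window bijectively onto the compact set $u \in [1,\, 2^{1/\beta}]$, on which $g_\beta$ is continuous and strictly positive, hence bounded below by some $c_g > 0$. Combined with the explicit form \eqref{Etdens0}, this yields $f_{E_t}(s) \geq c\, t^{-\beta}$ uniformly on the window, and hence $\int_{t^\beta/2}^{t^\beta} f_{E_t}(s)\, ds \geq c_1$ independently of $t$.

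For the killed stable kernel, I would start from the sharp two-sided bound $p(s,z) \asymp s^{-d/\alpha} \wedge s\,|z|^{-d-\alpha}$, which gives $p(s,x-y) \geq c\, s^{-d/\alpha}$ whenever $|x-y| \leq s^{1/\alpha}$; the latter condition holds throughout the window because of the hypothesis $|x-y| < t^{\beta/\alpha}$. To transfer this to $p_B$, I would use the strong Markov identity
\[
p_B(s,x,y) = p(s, x-y) - \E_x\bigl[p(s - \tau_B,\, X_{\tau_B} - y)\, \mathbf{1}_{\{\tau_B < s\}}\bigr].
\]
Since $x,y \in B(0,R-\epsilon)$ and the stable process can only exit $B$ by a jump, $|X_{\tau_B} - y| \geq \epsilon$, so the off-diagonal upper bound yields $p(s - \tau_B, X_{\tau_B} - y) \leq c s / \epsilon^{d+\alpha}$. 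Combined with the small-time exit estimate $\P_x(\tau_B < s) \leq c s / \epsilon^\alpha$ (which follows from the L\'evy measure tail), the subtracted term is at most $C s^2 / \epsilon^{d+2\alpha}$, which is negligible compared to $s^{-d/\alpha}$ once $s$ is small enough depending on $\epsilon$. Choosing $t_0$ so that $t_0^\beta$ lies below this threshold then gives $p_B(s,x,y) \geq \tfrac{1}{2} c\, s^{-d/\alpha} \geq c'\, t^{-\beta d/\alpha}$ throughout the window, and multiplying the two lower bounds and integrating delivers the claim.

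The main obstacle is the passage from the free kernel $p$ to the Dirichlet kernel $p_B$ for interior points: the trivial relation $p_B \leq p$ goes the wrong way, so one must quantitatively rule out the contribution of trajectories that have already exited $B$ in time $s$. The key geometric input is that any such trajectory must have made a jump of magnitude at least $\epsilon$, which feeds strong off-diagonal control into the subtracted expectation. Once that interior estimate is secured, the analysis of $f_{E_t}$ on the correct scale $s \asymp t^\beta$ is routine.
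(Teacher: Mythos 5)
Your skeleton matches the standard argument, but note first that this paper does not prove the proposition at all: it is imported verbatim, with citation, from Proposition 2.1 of \cite{Foondun-mijena-nane}. There, as in the surrounding text here (cf.\ the representation $G_B(t,x,y)=\int_0^\infty p_B(s,x,y)f_{E_t}(s)\,\d s$ used after \eqref{upper-bound-killed-stable}), the proof subordinates the killed stable kernel against $f_{E_t}$, localises to a window $s\asymp t^\beta$, and bounds the $f_{E_t}$-mass below by a fixed integral of $g_\beta$ over a compact subinterval of $(0,\infty)$ --- exactly your treatment of the time density, which is correct as written. Where you genuinely diverge is the interior lower bound on $p_B$: the cited proof imports it from known sharp two-sided Dirichlet heat kernel estimates for the killed stable process (Chen--Kim--Song type bounds, in which the boundary factors $\bigl(1\wedge \delta(x)^{\alpha/2}/\sqrt{s}\bigr)$ are bounded below for $x,y\in B(0,R-\epsilon)$ and $s$ small), whereas you derive it from scratch via the Hunt formula, using that $X_{\tau_B}\notin B$ and $|y|\le R-\epsilon$ force $|X_{\tau_B}-y|\ge\epsilon$, so the subtracted term is $O\bigl(s\,\epsilon^{-d-\alpha}\,\P_x(\tau_B<s)\bigr)=O(s^2\epsilon^{-d-2\alpha})$, negligible against $s^{-d/\alpha}$ once $s\le s_0(\epsilon)$. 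Your route is more elementary and self-contained; incidentally you do not even need the exit-by-a-jump property you invoke, since $X_{\tau_B}\notin B$ alone gives the $\epsilon$-separation. What the citation buys instead is uniformity in general $C^{1,1}$ domains and sharp boundary decay, neither of which is needed here because the proposition allows $t_0$ to depend on $\epsilon$.

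One small repair: with your window $s\in[t^\beta/2,\,t^\beta]$ the condition $|x-y|\le s^{1/\alpha}$ can fail at the lower end, since it requires $s\ge |x-y|^\alpha$ while the hypothesis only gives $|x-y|^\alpha< t^\beta$, and $t^\beta/2<t^\beta$. Either shift the window to $[t^\beta,\,2t^\beta]$ (then $u=ts^{-1/\beta}$ ranges over the compact set $[2^{-1/\beta},1]$ and the rest of your argument goes through verbatim, with $s^{-d/\alpha}\ge 2^{-d/\alpha}t^{-\beta d/\alpha}$), or keep your window and note that $|x-y|\le (2s)^{1/\alpha}$ still yields $p(s,x-y)\ge c\bigl(s^{-d/\alpha}\wedge s|x-y|^{-d-\alpha}\bigr)\ge c\,2^{-(d+\alpha)/\alpha}s^{-d/\alpha}$. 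With that adjustment, choosing $t_0=t_0(\epsilon)$ so that $2t_0^\beta$ lies below the threshold where $Cs^{2}\epsilon^{-d-2\alpha}\le \tfrac12 c\,s^{-d/\alpha}$, your proof is complete and correct.
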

\noindent For notational convenience, we set
\begin{equation*}
(\sG u)_t(x):=\int_{\R^d}G_t(x-y)u_0(y)\,\d y.
\end{equation*}
and
\begin{equation*}
(\tilde{\sG} u)_t(x):=\int_{\R^d}p(t,\,x-y)u_0(y)\,\d y.
\end{equation*}

\begin{proposition}\label{determ}
Let $x\in B(0,\,1)$ and Assumption \ref{initial} holds. Then there exists a positive number $t_0$ such that for $t\in (0,\,t_0]$, we have
\begin{equation*}
(\cG u)_{t+t_0}(x)\geq cK_{u_0},
\end{equation*}
where
\begin{equation}
K_{u_0}:=\int_{B(0,\,1)}u_0(x)\,\d x >0.
\end{equation}
\end{proposition}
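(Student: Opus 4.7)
The plan is to combine the Gaussian-type lower bound \eqref{heat} for the density $G_t$ with Assumption \ref{initial}. Since $(\cG u)_t(x) = \int_{\R^d} G_t(x-y)\,u_0(y)\,\d y$ is an integral of non-negative terms, restricting the domain of integration to $B(0,1)$ can only decrease the quantity. After that restriction, the problem reduces to bounding $G_{t+t_0}(x-y)$ from below by a strictly positive constant that is uniform for $x, y \in B(0,1)$ (where $|x-y| \leq 2$) and for $t \in (0, t_0]$.

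First I would fix any convenient $t_0 > 0$ (e.g.\ $t_0 = 1$). For $t \in (0, t_0]$ the argument $s := t + t_0$ lies in the compact interval $[t_0, 2t_0]$. Applying \eqref{heat} yields
\[
G_s(x-y) \;\geq\; c_1 \min\bigl\{\, s^{-\beta d/\alpha},\ s^\beta / |x-y|^{d+\alpha}\,\bigr\}.
\]
Using $s \leq 2t_0$ in the first branch, and $s \geq t_0$ together with $|x-y| \leq 2$ in the second branch, the right-hand side is bounded below by
\[
c \;:=\; c_1 \min\bigl\{\,(2t_0)^{-\beta d/\alpha},\ t_0^\beta / 2^{d+\alpha}\,\bigr\} \;>\; 0,
\]
a constant depending only on $t_0$, $d$, $\alpha$, $\beta$. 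Multiplying by $u_0(y)$, integrating over $B(0,1)$, and invoking Assumption \ref{initial} then gives
\[
(\cG u)_{t+t_0}(x) \;\geq\; \int_{B(0,1)} G_{t+t_0}(x-y)\, u_0(y)\,\d y \;\geq\; c\, K_{u_0},
\]
which is the claimed inequality.

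There is no real obstacle: the entire argument is a direct application of the known heat-kernel lower bound on a bounded space-time region. The only care needed is that \emph{both} branches of the minimum in \eqref{heat} must be controlled by a positive constant — the $s^{-\beta d/\alpha}$ branch from the upper bound $s \leq 2t_0$, and the $s^\beta/|x-y|^{d+\alpha}$ branch from the upper bound $|x-y| \leq 2$ — and these are immediate once we restrict to $x, y \in B(0,1)$ and $s \in [t_0, 2t_0]$.
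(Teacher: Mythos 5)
Your proof is correct, but it takes a genuinely different route from the paper's. The paper never touches the two-sided estimate for $G_t$ at this point: it unpacks $G_{t+t_0}$ through the subordination representation \eqref{Eq:Green1}, writes $(\cG u)_{t+t_0}(x)=\int_0^\infty(\tilde{\sG}u)_s(x)f_{E_{t+t_0}}(s)\,\d s$, invokes the analogous lower bound $(\tilde{\sG}u)_s(x)\geq c_1K_{u_0}$ for the \emph{stable} semigroup (Proposition 2.1 of \cite{Foondun-liu-nane}) on the window $s\in[t_0,2t_0]$, and then lower-bounds $\int_{t_0}^{2t_0}f_{E_{t+t_0}}(s)\,\d s$ by an explicit substitution using the density formula \eqref{Etdens0} and the strict positivity of $g_\beta$ on compact subsets of $(0,\infty)$. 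You instead apply the subordinated-kernel lower bound \eqref{heat} directly, observing that for $s=t+t_0\in[t_0,2t_0]$ and $|x-y|\leq 2$ both branches of the minimum are bounded below by a constant depending only on $t_0,d,\alpha,\beta$; that is a legitimate shortcut, and your bounding of the minimum branch-by-branch is valid since $\min\{a,b\}\geq\min\{a_0,b_0\}$ when $a\geq a_0$ and $b\geq b_0$. What each approach buys: yours is shorter, gives an explicit constant, and in fact works for \emph{every} choice of $t_0>0$ (slightly stronger than the "there exists $t_0$" formulation), but it leans on \eqref{heat}, which the paper quotes from Lemma 2.1 of \cite{Foondun-nane} with the qualifier "when $\alpha>d$" — that restriction is genuinely needed only for the upper bound \eqref{heat2} (the kernel $G_t$ is singular at the origin when $d\geq\alpha$, but the lower bound survives in all dimensions, and the paper itself uses \eqref{heat} for general $d$ in Proposition \ref{prop-kernel} and in the proof of Proposition \ref{prop5.1}), so this is a point worth flagging rather than a gap. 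The paper's subordination route avoids even the appearance of this issue, since the stable-kernel estimate behind Proposition 2.1 of \cite{Foondun-liu-nane} holds in every dimension, and it is the pattern reused in Proposition \ref{determ-dirich}, where no analogue of \eqref{heat} is available for the killed kernel.
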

\begin{proof}
By definition and Proposition 2.1 in \cite{Foondun-liu-nane}, we have
\begin{equation*}
\begin{aligned}
(\sG u)_{t+t_0}(x)&=\int_{\R^d}G_{t+t_0}(x-y)u_0(y)\,\d  y\\
&=\int_{\R^d}\int_0^\infty p(s,\,x-y)f_{E_{t+t_0}}(s)\,\d s\,u_0(y)\d  y\\
&=\int_0^\infty (\tilde{\sG} u)_s(x)f_{E_{t+t_0}}(s)\,\d  s\\
&\geq c_1K_{u_0}\int_{t_0}^{2t_0} f_{E_{t+t_0}}(s)\,\d  s\\
& = c_1K_{u_0}\int_{t_0}^{2t_0}(t+t_0)\beta^{-1}s^{-1-1/\beta}g_{\beta}((t+t_0)s^{-1/\beta})\,\d s\\
&= c_1K_{u_0}\displaystyle\int_{\frac{t+t_0}{(2t_0)^{1/\beta}}}^{\frac{t+t_0}{t_0^{1/\beta}}}g_{\beta}(u)\,\d u\\
&\geq c_1K_{u_0}\displaystyle\int_{\frac{2t_0}{(2t_0)^{1/\beta}}}^{\frac{t_0}{t_0^{1/\beta}}}g_{\beta}(u)\,\d u\\
&= c_2K_{u_0},
\end{aligned}
\end{equation*}
where $c_2$ depends on $t_0$. The last equality before the last inequality follows by substitution.
\end{proof}
\noindent Define
\begin{equation*}
(\cG_Bu)_t(x):=\int_{B(0,\,R)}G_{B}(t, x,y)u_0(y)\,\d y.
\end{equation*}
The following proposition will be used in the proof of Theorem \ref{Dirichlet}.
\begin{proposition}\label{determ-dirich}
Let $t\leq \left(\frac{R}{2}\right)^\alpha$ and $R>0$. Under Assumption \ref{initial-dirichlet}, we have
\begin{align*}
(\cG_Bu)_t(x)\geq c,\quad\text{for all}\quad x\in B(0,\,R/2),
\end{align*}
where $c$ is some positive constant.
\end{proposition}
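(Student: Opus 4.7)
The plan is to adapt the proof of Proposition~\ref{determ} to the Dirichlet (killed) setting. By the subordination formula,
\begin{equation*}
(\cG_B u)_t(x) = \int_0^\infty \int_{B(0,R)} p_B(s, x, y)\, u_0(y)\,\d y\, f_{E_t}(s)\,\d s,
\end{equation*}
where $p_B(s,\cdot,\cdot)$ is the transition density of the $\alpha$-stable L\'evy process killed on exiting $B$. Applying Assumption~\ref{initial-dirichlet} to replace $u_0$ by $\tilde\kappa\,\mathbf{1}_{B(0,R/2)}$, it suffices to bound from below the quantity
\begin{equation*}
I(t,x) := \int_0^\infty \int_{B(0,R/2)} p_B(s, x, y)\,\d y\, f_{E_t}(s)\,\d s
\end{equation*}
uniformly in $t\in(0,(R/2)^\alpha]$ and $x\in B(0,R/2)$ by a positive constant.

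The first step is to show that for $x,y\in B(0,R/2)$ and $s\le (R/2)^\alpha$, the killed kernel obeys $p_B(s,x,y)\gtrsim p(s,x,y)$. This is an interior estimate: both $x$ and $y$ lie at distance at least $R/2\ge s^{1/\alpha}$ from $\partial B$, so in this regime the killed process essentially behaves like the free one. Combined with the standard lower bound on $p(s,\cdot,\cdot)$ and the scaling identity $X_s\stackrel{d}{=}x+s^{1/\alpha}X_1$ together with strict positivity of the stable density on compact sets, integration in $y$ gives
\begin{equation*}
\int_{B(0,R/2)} p_B(s,x,y)\,\d y \;\ge\; c_1 \;>\; 0
\qquad\text{for all } x\in B(0,R/2),\ 0<s\le (R/2)^\alpha,
\end{equation*}
with $c_1=c_1(R)$ independent of $x$ and $s$.

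Next, following the substitution $u=t s^{-1/\beta}$ used in Proposition~\ref{determ}, I would pick $[s_1,s_2]=[c\,t^\beta,\,2c\,t^\beta]$ with $c=c(R)>0$ chosen so that $2c\,t^\beta\le(R/2)^\alpha$ for every $t\in(0,(R/2)^\alpha]$. Under this substitution, the integration limits in the $u$-variable become the $t$-independent pair $(2c)^{-1/\beta}$ and $c^{-1/\beta}$, so
\begin{equation*}
\int_{s_1}^{s_2} f_{E_t}(s)\,\d s \;=\; \int_{(2c)^{-1/\beta}}^{c^{-1/\beta}} g_\beta(u)\,\d u,
\end{equation*}
which is a strictly positive constant depending only on $R$. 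Combining the two previous displays yields $I(t,x)\ge c_1\cdot\int_{s_1}^{s_2} f_{E_t}(s)\,\d s \ge c_2(R)>0$, and multiplying by $\tilde\kappa$ proves the claim.

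The main obstacle is the uniform interior lower bound $p_B(s,x,y)\asymp p(s,x,y)$, which must hold also when $x$ is close to $\partial B(0,R/2)$; this is supplied by the usual quantitative killed heat-kernel estimates for $\alpha$-stable processes on smooth domains, whose sole input is that both endpoints lie at distance at least $s^{1/\alpha}$ from $\partial B$, a condition automatically enforced by the choice $s\le (R/2)^\alpha$. Once this is in hand, the remainder is a routine subordination-and-substitution argument modeled on Proposition~\ref{determ}.
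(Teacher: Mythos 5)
Your proof is correct and follows essentially the same route as the paper's: both condition on the subordinator via $G_B(t,x,y)=\int_0^\infty p_B(s,x,y)f_{E_t}(s)\,\d s$, bound the killed stable semigroup below uniformly for $s\le (R/2)^\alpha$ and $x\in B(0,\,R/2)$ (the paper cites Proposition 2.2 of \cite{Foondun-liu-nane} for this, where you sketch its proof via interior heat-kernel comparability), and then reduce to a $t$-independent positive $g_\beta$-integral via the substitution $u=ts^{-1/\beta}$. Your restriction to the window $[c\,t^\beta,\,2c\,t^\beta]$ instead of the paper's full interval $(0,\,(R/2)^\alpha]$ is an inessential bookkeeping difference.
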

\begin{proof}
By a simple conditioning and Proposition 2.2 in \cite{Foondun-liu-nane}, we have
\begin{align*}
(\cG_Bu)_t(x)&=\int_{B(0,\,R)}G_{B}(t,x,y)u_0(y)\,\d y\\
&=\int_0^\infty (\tilde{\mathcal G}_B u)_{s}(x)f_{E_t}(s)\,\d s\\
&\geq \int_0^{(R/2)^\alpha} (\tilde{\mathcal G}_B u)_{s}(x)f_{E_t}(s)\,\d s\\
&\geq c_1\int_0^{(R/2)^\alpha}f_{E_t}(s)\,\d s\\
&=c_1\int_0^{(R/2)^\alpha}t\beta^{-1}s^{-1-1/\beta}g_{\beta}(ts^{-1/\beta})\,\d s\\
&= c_1\int^{\infty}_{t/(R/2)^{\alpha/\beta}}g_{\beta}(u)\,\d u\\
&\geq c_1\int^{\infty}_{(R/2)/(R/2)^{\alpha/\beta}}g_{\beta}(u)\,\d u\\
&= c.
\end{align*}
\end{proof}

\begin{proposition}\label{prop-kernel}
Suppose that $t\leq \left(\frac{R}{2}\right)^\alpha$
 and $R>0$. Then for all $x_1,\,x_2 \in B(0,\,R)$, we have
\begin{equation*}
\int_{B(0,\,R)\times B(0,\,R)}G_{t-s}(x_1-y_1)G_{t-s}(x_2-y_2)f(y_1,\,y_2)\,\d y_1\d y_2\geq cK_{f},
\end{equation*}
where $s\leq t$ and $c$ is some positive constant.
\end{proposition}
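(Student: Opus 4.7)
The plan is to closely mirror the proof of Proposition \ref{determ-dirich}. First, by Assumption \ref{color}, $f(y_1,y_2)\geq K_f$ throughout $B(0,R)\times B(0,R)$, so the double integral factorizes:
\begin{equation*}
\int_{B(0,R)\times B(0,R)} G_{t-s}(x_1-y_1)\,G_{t-s}(x_2-y_2)\, f(y_1,y_2)\, \d y_1\d y_2 \;\geq\; K_f\, I(x_1)\, I(x_2),
\end{equation*}
where $I(x):=\int_{B(0,R)} G_{r}(x-y)\,\d y$ and $r:=t-s\leq (R/2)^\alpha$. It therefore suffices to show $I(x)\geq c^\ast>0$ uniformly for $x\in B(0,R)$ and $r\in (0,(R/2)^\alpha]$.

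Next, by the subordination identity \eqref{Eq:Green1},
\begin{equation*}
I(x) \;=\; \int_0^\infty P_x\bigl(X_\tau\in B(0,R)\bigr)\, f_{E_r}(\tau)\,\d\tau \;\geq\; c_1 \int_0^{R^\alpha} f_{E_r}(\tau)\,\d\tau,
\end{equation*}
where $c_1:=\inf\{P_x(X_\tau\in B(0,R)):x\in B(0,R),\,\tau\leq R^\alpha\}$. I would handle the temporal integral exactly as in the proof of Proposition \ref{determ-dirich}: the substitution $u=r\tau^{-1/\beta}$ converts it into $\int_{rR^{-\alpha/\beta}}^{\infty}g_\beta(u)\,\d u$, whose lower limit is bounded above by $(R/2)^\alpha R^{-\alpha/\beta}$ for $r\leq (R/2)^\alpha$, so the strict positivity of $g_\beta$ on $(0,\infty)$ yields a uniform lower bound $c_2>0$.

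The main obstacle is showing $c_1>0$, because for $x$ near $\partial B(0,R)$ the stable process has an appreciable probability of exiting the ball. To handle it I would use the $\alpha$-stable scaling $P_x(X_\tau\in B(0,R)) = P_{\tilde x}(X_1\in B(0,\tilde R))$ with $\tilde x:=x\tau^{-1/\alpha}$, $\tilde R:=R\tau^{-1/\alpha}\geq 1$ and $|\tilde x|\leq \tilde R$. Writing this probability as $\int_{B(-\tilde x,\tilde R)}p(1,z)\,\d z$ and noting that $0\in \overline{B(-\tilde x,\tilde R)}$, the radial symmetry of $p(1,\cdot)$ ensures that the ball captures at least a fixed positive fraction of its total mass. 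Under the reparametrisation $(\tilde x,\tilde R)\mapsto(w,u):=(\tilde x/\tilde R,\,1/\tilde R)\in \overline{B(0,1)}\times (0,1]$, the integral is jointly continuous and stays bounded below as $u\to 0^+$ (with limiting value $1/2$ when $|w|=1$ by the half-space symmetry), so its infimum on this parameter region is strictly positive. Combining the two bounds gives $I(x)\geq c_1 c_2$, and therefore the desired estimate with $c=(c_1 c_2)^2$.
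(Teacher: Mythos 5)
Your proof is correct, but it takes a genuinely different route from the paper. The paper never factorizes the double integral: after invoking Assumption \ref{color} it localizes each spatial integral to the near-diagonal set $\mathcal{A}_i=\{y_i\in B(0,R):|x_i-y_i|\leq (t-s)^{\beta/\alpha}\}$, applies the on-diagonal lower bound $G_{t-s}(x_i-y_i)\geq C(t-s)^{-\beta d/\alpha}$ from \eqref{heat}, and lets the volumes $|\mathcal{A}_i|=c(t-s)^{\beta d/\alpha}$ cancel the kernel size exactly — a three-line computation once \eqref{heat} is available. You instead bound the full mass $I(x)=\int_{B(0,R)}G_r(x-y)\,\d y$ from below uniformly, via subordination \eqref{Eq:Green1}, the change of variables $u=r\tau^{-1/\beta}$ (which is precisely how Proposition \ref{determ-dirich} handles the time integral), and stable scaling for the spatial part. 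What your route buys: it avoids \eqref{heat} entirely, which the paper quotes from \cite{Foondun-nane} under the restriction $\alpha>d$, so your argument needs only subordination plus positivity and scaling of $p(1,\cdot)$ and is in that sense more robust; it also establishes the stronger uniform statement $\inf_{x\in B(0,R),\,r\leq (R/2)^\alpha}I(x)>0$. What the paper's route buys: brevity, and — importantly — portability to the killed kernel, where the paper reruns the identical localization using Proposition \ref{density-lower-bound-killed-fractional} to prove Proposition \ref{prop-kernel-killed}; your mass-based argument would not transfer there, since the killed semigroup loses mass near $\partial B(0,R)$. One step of yours deserves tightening: the claim that the probability $\int_{B(-\tilde x,\tilde R)}p(1,z)\,\d z$ is "jointly continuous and stays bounded below as $u\to 0^+$" on the compactified parameter region is left informal; it can be short-circuited by noting that whenever $|\tilde x|\leq \tilde R$ and $\tilde R\geq 1$, the ball $B(-\tilde x,\tilde R)$ contains the fixed-size ball $B(q,1/2)$ with $q=-\tfrac{\tilde x}{2|\tilde x|}$ (check the two cases $|\tilde x|\geq 1/2$ and $|\tilde x|<1/2$), so $c_1\geq \inf_{|q|=1/2}\int_{B(q,1/2)}p(1,z)\,\d z>0$ by strict positivity and continuity of $p(1,\cdot)$, with no compactness argument needed.
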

\begin{proof}
By assumption \ref{color}, we observe
\begin{align*}
\int_{B(0,\,R)\times B(0,\,R)}&G_{t-s}(x_1-y_1)G_{t-s}(x_2-y_2)f(y_1,\,y_2)\,\d y_1\d y_2\\
&\geq K_f\int_{B(0,\,R)\times B(0,\,R)}G_{t-s}(x_1-y_1)G_{t-s}(x_2-y_2)\,\d y_1\d y_2.
\end{align*}
Let
\begin{equation*}
\mathcal{A}_i:=\{y_i\in B(0,\,R); |x_i-y_i|\leq (t-s)^{\beta/\alpha} \},\quad \text{for}\quad i = 1,2.
\end{equation*}
Since $t\leq \left(\frac{R}{2}\right)^\alpha$ we observe that $|\mathcal{A}_i|=c|t-s|^{\beta d/\alpha}$ for some constant $c$. Using the estimates given by \eqref{heat} for $G_t(x)$, we have
\begin{align*}
K_f\int_{B(0,\,R)\times B(0,\,R)}&G_{t-s}(x_1-y_1)G_{t-s}(x_2-y_2)\,\d y_1\d y_2\\
&\geq K_f\int_{\mathcal{A}_1\times \mathcal{A}_2}G_{t-s}(x_1-y_1)G_{t-s}(x_2-y_2)\,\d y_1\d y_2\\
&\geq K_f\int_{\mathcal{A}_1\times \mathcal{A}_2}C(t-s)^{-2\beta d/\alpha}\,\d y_1\d y_2\\
&= c_2K_f.
\end{align*}
This completes the proof.
\end{proof}
 We need to  introduce a version of the above result for the killed space-time fractional kernel using Proposition \ref{density-lower-bound-killed-fractional}.
\begin{proposition}\label{prop-kernel-killed}
Let $R>0$ and fix $\epsilon>0$. Then for all $x_1,\,x_2 \in B(0,\,R-\epsilon)$ and $t\leq \left(\frac{R}{2}\right)^\alpha$, we have
\begin{equation*}
\int_{B(0,\,R)\times B(0,\,R)}G_{B}( t-s, x_1-y_1)G_{B} (t-s, x_2-y_2)f(y_1,\,y_2)\,\d y_1\d y_2\geq cK_{f},
\end{equation*}
where $s\leq t$ and $c$ is some positive constant.
\end{proposition}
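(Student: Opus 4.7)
The plan is to follow the template of the proof of Proposition~\ref{prop-kernel} almost line by line, replacing the free kernel lower bound \eqref{heat} with the killed-kernel lower bound supplied by Proposition~\ref{density-lower-bound-killed-fractional}. First I would invoke Assumption~\ref{color} to bound $f(y_1,y_2)\geq K_f$ on $B(0,R)\times B(0,R)$ and pull this factor out, reducing the task to showing that
\begin{equation*}
\int_{B(0,R)\times B(0,R)} G_B(t-s,x_1,y_1)\,G_B(t-s,x_2,y_2)\,\d y_1\,\d y_2 \geq c.
\end{equation*}

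Next I would localize the inner integral to the sets
\begin{equation*}
\mathcal{A}_i := \{y_i\in B(0,R) : |x_i-y_i| \leq (t-s)^{\beta/\alpha}\},\quad i=1,2.
\end{equation*}
The key geometric observation is that for $x_i\in B(0,R-\epsilon)$ and $t\leq (R/2)^\alpha$ (so in particular $(t-s)^{\beta/\alpha}$ is smaller than $\epsilon$, after possibly shrinking $t$), each $\mathcal{A}_i$ is a genuine ball of radius $(t-s)^{\beta/\alpha}$ sitting inside $B(0,R-\epsilon/2)$, hence $|\mathcal{A}_i| = c\,(t-s)^{\beta d/\alpha}$. Moreover, for $y_i\in\mathcal{A}_i$ both $x_i$ and $y_i$ lie in $B(0,R-\epsilon/2)$, so Proposition~\ref{density-lower-bound-killed-fractional} applies and gives $G_B(t-s,x_i,y_i)\geq C(t-s)^{-\beta d/\alpha}$ on $\mathcal{A}_i$, provided $t-s$ is below the threshold $t_0$ of that proposition. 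Multiplying the two pointwise bounds and integrating over $\mathcal{A}_1\times\mathcal{A}_2$ produces
\begin{equation*}
\int_{\mathcal{A}_1\times\mathcal{A}_2} C^2(t-s)^{-2\beta d/\alpha}\,\d y_1\,\d y_2 = c^\ast,
\end{equation*}
the two factors of $(t-s)^{\beta d/\alpha}$ from the volumes of $\mathcal{A}_i$ cancelling the two factors of $(t-s)^{-\beta d/\alpha}$ from the kernel lower bounds. Reattaching $K_f$ gives the claim with constant $c^\ast K_f$.

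The only real obstacle is a bookkeeping one: Proposition~\ref{density-lower-bound-killed-fractional} is stated for $t<t_0$ (the threshold from that proposition), so I must ensure the hypothesis $t\leq (R/2)^\alpha$ in the present statement is compatible, and if necessary replace $(R/2)^\alpha$ by $\min\{t_0,(R/2)^\alpha\}$ or simply absorb this into the choice of constants. Since $s\leq t$, we automatically have $t-s\leq t$, so the same threshold works uniformly in $s$. Beyond this small adjustment, the argument is a direct imitation of Proposition~\ref{prop-kernel}, so no further difficulty is anticipated.
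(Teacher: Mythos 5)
Your proof is correct and follows essentially the same route as the paper's: extract $K_f$ via Assumption \ref{color}, localize to the sets $\mathcal{A}_1\times\mathcal{A}_2$, and apply the killed-kernel lower bound of Proposition \ref{density-lower-bound-killed-fractional} so that the volume factors $(t-s)^{\beta d/\alpha}$ cancel the kernel factors $(t-s)^{-\beta d/\alpha}$ (the paper simply defines $\mathcal{A}_i$ inside $B(0,R-\epsilon)$ from the start, a cosmetic difference from your $B(0,R-\epsilon/2)$ device). The bookkeeping point you flag --- that the hypothesis $t\leq (R/2)^\alpha$ is never reconciled with the threshold $t_0$ of Proposition \ref{density-lower-bound-killed-fractional} --- is in fact silently glossed over in the paper's own proof, so your explicit remedy of shrinking the time horizon only makes the argument more careful.
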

\begin{proof}
Assumption \ref{color} gives
\begin{align*}
\int_{B(0,\,R)\times B(0,\,R)}&G_{B}(t-s, x_1-y_1)G_{B}( t-s, x_2-y_2)f(y_1,\,y_2)\,\d y_1\d y_2\\
&\geq K_f\int_{B(0,\,R-\epsilon)\times B(0,\,R-\epsilon)}G_{B}( t-s, x_1-y_1)G_{B}( t-s, x_2-y_2)\,\d y_1\d y_2.
\end{align*}
Since $t\leq \left(\frac{R}{2}\right)^\alpha$ if we set
\begin{equation*}
\mathcal{A}_i:=\{y_i\in B(0,\,R-\epsilon); |x_i-y_i|\leq (t-s)^{\beta/\alpha} \}\quad \text{for}\quad i = 1, 2,
\end{equation*}
then $|\mathcal{A}_i|=c_1|t-s|^{\beta d/\alpha}$ for some $c_1$. We therefore have using Proposition \ref{density-lower-bound-killed-fractional}
\begin{align*}
\int_{B(0,\,R-\epsilon)\times B(0,\,R-\epsilon)}&G_{B} ( t-s, x_1-y_1)G_{B}( t-s, x_2-y_2)\,\d y_1\d y_2\\
&\geq \int_{\mathcal{A}_1\times \mathcal{A}_2}G_{B}( t-s, x_1-y_1)G_{B}( t-s, x_2-y_2)\,\d y_1\d y_2\\
&\geq \int_{\mathcal{A}_1\times \mathcal{A}_2}C(t-s)^{-2\beta d/\alpha}\,\d y_1\d y_2\\
&= c_2,
\end{align*}
This proves the required inequality.
\end{proof}

\begin{remark}
Under the same assumption of Proposition \ref{prop-kernel-killed}, we clearly have
\begin{equation*}
\int_{B(0,\,R-\epsilon)\times B(0,\,R-\epsilon)}G_{B}( t-s, x_1-y_1)G_{B}( t-s,x_2-y_2)f(y_1,\,y_2)\,\d y_1\d y_2\geq cK_{f},
\end{equation*}
where $s\leq t$ and $c$ is some positive constant.

\end{remark}

In the next two propositions we will give the renewal inequalities needed to prove non-existence results.
\begin{proposition}\label{volterra}
Fix $T>0$ and suppose that $h$ is a non-negative function satisfying the following non-linear integral inequality,
\begin{align*}
h(t)\geq C+D\int_0^t\frac{h(s)^{1+\gamma}}{(t-s)^{d\beta/\alpha}}\,\d s,\quad \text{for}\quad 0<t\leq T,
\end{align*}
where $C$, $D$ and $\gamma$ are positive numbers. Then for any $t_0\in (0,\,T]$, there exists an $C_0$ such that for $C>C_0$
\begin{align*}
h(t)=\infty\quad\text{whenever}\quad{t\geq t_0}.
\end{align*}
\end{proposition}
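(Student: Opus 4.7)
My plan is to use a standard iterative bootstrapping argument, exploiting the super-linear growth $h^{1+\gamma}$ of the integrand. The idea is that plugging any pointwise lower bound $h(s) \geq \phi(s)$ back into the right-hand side of the inequality produces an improved lower bound, and because the feedback is super-linear, iterating the procedure yields lower bounds that grow doubly exponentially in the number of iterations, eventually forcing $h(t)=\infty$.

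More concretely, define the sequence $h_0 \equiv C$ and $h_{n+1}(t)=C+D\int_0^t h_n(s)^{1+\gamma}(t-s)^{-d\beta/\alpha}\,ds$. From the assumed inequality one has $h \geq h_0 = C$ on $(0,T]$, and induction then gives $h(t)\geq h_n(t)$ for every $n$. I would show by induction that each $h_n$ admits a power-law lower bound $h_n(t)\geq A_n t^{a_n}$, where
$$a_0=0,\ A_0=C,\qquad a_{n+1}=(1+\gamma)a_n+1-\tfrac{d\beta}{\alpha},\qquad A_{n+1}=D A_n^{1+\gamma}\,B\!\bigl((1+\gamma)a_n+1,\,1-\tfrac{d\beta}{\alpha}\bigr).$$
The identity $\int_0^t s^p (t-s)^{-d\beta/\alpha}\,ds = t^{p+1-d\beta/\alpha}\,B(p+1,1-d\beta/\alpha)$ is what drives the induction and it is legal because $d\beta/\alpha<1$, which is in force throughout the paper via the condition $d<(2\wedge\beta^{-1})\alpha$.

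Solving the linear recurrence gives $a_n=\frac{1-d\beta/\alpha}{\gamma}\bigl[(1+\gamma)^n-1\bigr]$, so $a_n$ grows like $(1+\gamma)^n$. For the amplitudes, Stirling gives $B((1+\gamma)a_n+1,\,1-d\beta/\alpha)\sim \Gamma(1-d\beta/\alpha)\,a_n^{-(1-d\beta/\alpha)}$ as $n\to\infty$. Setting $L_n:=\log A_n$ and taking logarithms in the recurrence for $A_n$, I obtain an inequality of the form $L_{n+1}\geq (1+\gamma)L_n - c\, n - c'$, from which a standard unfolding argument gives $L_n \geq (1+\gamma)^n L_0 - O(n)$.

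Combining these estimates, for any fixed $t>0$,
$$\log\bigl(A_n t^{a_n}\bigr)=L_n+a_n\log t \;\geq\; (1+\gamma)^n\bigl(\log C + c_1\log t\bigr)-O(n),$$
where $c_1=(1-d\beta/\alpha)/\gamma$. For $t\geq t_0$ this diverges to $+\infty$ provided $\log C > -c_1\log t_0$, i.e.\ provided $C\geq C_0$ for a suitable threshold $C_0=C_0(t_0,\gamma,d,\alpha,\beta,D)$. Since $h(t)\geq A_n t^{a_n}$ for every $n$, this forces $h(t)=\infty$ on $[t_0,T]$. I expect the main technical obstacle to be bookkeeping: verifying that the polynomial-in-$n$ corrections from the Stirling estimate of the Beta function are genuinely overwhelmed by the doubly exponential growth of the leading term, which is exactly why a sufficiently large $C$ (and strictly positive $t_0$) is required.
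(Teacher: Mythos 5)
Your bootstrapping argument is sound in outline but takes a genuinely different, and considerably heavier, route than the paper. The paper's proof is a two-line reduction: since $0<t-s\le t\le T$, it bounds the kernel below by a constant, $(t-s)^{-d\beta/\alpha}\ge T^{-d\beta/\alpha}$, so that $h(t)\ge C+DT^{-d\beta/\alpha}\int_0^t h(s)^{1+\gamma}\,ds$; by comparison it then suffices to solve the autonomous ODE $h'(t)=DT^{-d\beta/\alpha}h(t)^{1+\gamma}$, $h(0)=C$, explicitly, $h(t)^{-\gamma}=C^{-\gamma}-\gamma DtT^{-d\beta/\alpha}$, which blows up at $t^\ast=T^{d\beta/\alpha}/(C^\gamma D\gamma)$; choosing $C_0=\bigl(T^{d\beta/\alpha}/(D\gamma t_0)\bigr)^{1/\gamma}$ forces $t^\ast\le t_0$ and monotonicity finishes. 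Note the paper's reduction needs no condition on $d\beta/\alpha$, whereas your Beta-function identity requires $d\beta/\alpha<1$; this does hold under the standing assumption $d<(2\wedge\beta^{-1})\alpha$ (and if $d\beta/\alpha\ge1$ the integral on the right is infinite whenever $h\ge C>0$, so nothing is really lost), and in exchange your iteration keeps the singular kernel and would survive in settings where the crude bound $(t-s)^{-d\beta/\alpha}\ge T^{-d\beta/\alpha}$ is too lossy. Since the proposition is only invoked on a bounded time window, the paper's shortcut is the more economical choice here.

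There is, however, one step in your write-up that is wrong as stated: the unfolding of the recursion. From $L_{n+1}\ge(1+\gamma)L_n-cn-c'$ you cannot conclude $L_n\ge(1+\gamma)^nL_0-O(n)$; the accumulated correction is $\Theta((1+\gamma)^n)$, not $O(n)$. Dividing by $(1+\gamma)^{n+1}$ and summing the resulting telescoping inequality gives the correct form $L_n\ge(1+\gamma)^n(L_0-M)$ with $M=\sum_{k\ge0}(ck+c')(1+\gamma)^{-(k+1)}<\infty$. Consequently your displayed sufficient condition $\log C>-c_1\log t_0$ is too weak: if $0<\log C+c_1\log t_0<M$, the lower bounds $A_nt^{a_n}$ can tend to $0$ and prove nothing. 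The repair is exactly the geometric-series bound just given, which yields divergence of $(1+\gamma)^n(\log C-M+c_1\log t)$ for all $t\ge t_0$ whenever $\log C>M-c_1\log t_0$, i.e. $C>C_0=e^{M}t_0^{-c_1}$. This is in fact consistent with your own closing sentence that $C_0$ must depend on $D$ (which enters $c'$ and hence $M$), even though the displayed threshold preceding it does not. With that one correction, your proof is complete.
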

\begin{proof}
Since $t\leq T$ the inequality reduces to
\begin{align*}
h(t)\geq C+\frac{D}{T^{d\beta/\alpha}}\int_0^th(s)^{1+\gamma}\,\d s.
\end{align*}
Thanks to comparison principle, it suffices to consider
\begin{align*}
h(t)=C+\frac{D}{T^{d\beta/\alpha}}\int_0^th(s)^{1+\gamma}\,\d s,\quad \text{for}\quad t\leq T,
\end{align*}
which is equivalent to the following non-linear ordinary differential equation,
\begin{equation*}
\frac{h'(t)}{h(t)^{1+\gamma}}=\frac{D}{T^{d\beta/\alpha}},
\end{equation*}
with initial condition $h(0) = C$, whose solution is given by
\begin{equation*}
\frac{1}{h(t)^{\gamma}}=\frac{1}{C^\gamma}-\frac{\gamma Dt}{T^{d\beta/\alpha}}, \quad\text{for}\quad t\leq T.
\end{equation*}
Thus the blowup occurs at $t=\frac{T^{d\beta/\alpha}}{C^\gamma D\gamma}$. Choose $C>\left(\frac{T^{d\beta/\alpha}}{D\gamma t_0}\right)^{1/\gamma}$ for any fixed $t_0<T$. The conclusion follows since $h(t)$ is increasing on $(0,\infty)$ and blow-up occurs before time $t_0$. \end{proof}
Next we will give a slightly modified renewal inequalities needed for the proof of our main results.

\begin{proposition}\label{rem-volterra}
Let $0<(1+\gamma)d\beta/\alpha<1$. Suppose $h$ is a non-negative function satisfying the following non-linear integral inequality,
\begin{align*}
h(t)\geq C+D\int_0^t\frac{h(s)^{1+\gamma}}{(t-s)^{d\beta/\alpha}}\,\d s,\quad \text{for}\quad t>0,
\end{align*}
where $C$, $D$ and $\gamma$ are positive numbers. Then for any $C>0$ there exists $t_0>0$ such that $h(t)=\infty$ for all $t\geq t_0$.
\end{proposition}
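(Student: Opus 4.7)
The approach is a direct reduction to Proposition \ref{volterra}, exploiting the extra freedom that here the inequality holds on all of $(0,\infty)$ rather than only on a bounded interval $[0,T]$. This lets us pick the ambient length $T$ as large as we please, so that the size threshold demanded by Proposition \ref{volterra} eventually drops below the given constant $C$, no matter how small $C$ is.

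Fix $T>0$. The inequality in Proposition \ref{rem-volterra} holds on $[0,T]$, so Proposition \ref{volterra} applies with any $t_0\in(0,T]$. Choosing $t_0=T/2$ yields the threshold
\[
C_{0}(T)\;=\;\left(\frac{T^{d\beta/\alpha}}{D\gamma\,(T/2)}\right)^{1/\gamma}\;=\;\left(\frac{2\,T^{d\beta/\alpha-1}}{D\gamma}\right)^{1/\gamma}.
\]
The hypothesis $(1+\gamma)\,d\beta/\alpha<1$ in particular gives $d\beta/\alpha<1$, so $d\beta/\alpha-1<0$ and thus $C_{0}(T)\to 0$ as $T\to\infty$. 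For the given $C>0$, I pick $T_{\ast}$ so large that $C>C_{0}(T_{\ast})$; then Proposition \ref{volterra} yields $h\equiv+\infty$ on the entire interval $[T_{\ast}/2,\,T_{\ast}]$.

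To extend this to all $t\geq T_{\ast}/2$, I apply the integral inequality once more: for $t>T_{\ast}$,
\[
h(t)\;\geq\;D\int_{T_{\ast}/2}^{T_{\ast}}\frac{h(s)^{1+\gamma}}{(t-s)^{d\beta/\alpha}}\,ds\;=\;+\infty,
\]
because $h(s)^{1+\gamma}\equiv+\infty$ on a set of positive Lebesgue measure and the kernel is strictly positive. Taking $t_{0}:=T_{\ast}/2$ then completes the argument.

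The main subtlety, and the only place real care is needed, is the assertion that Proposition \ref{volterra} actually delivers $h\equiv+\infty$ on the whole sub-interval $[T_{\ast}/2,T_{\ast}]$ and not merely at a single blow-up instant. This is built into its ODE-comparison proof: the companion solution $\tilde h$ is strictly increasing and diverges at some finite $t^{\ast}<T_{\ast}/2$, after which the integral $\int_{0}^{t}h(s)^{1+\gamma}\,ds$ is already equal to $+\infty$ for every $t\geq t^{\ast}$, forcing $h\equiv+\infty$ on $[t^{\ast},T_{\ast}]$. Once this is granted, the propagation step past $T_{\ast}$ above is routine, and the conclusion of Proposition \ref{rem-volterra} follows.
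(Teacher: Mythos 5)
Your proof is correct, but it takes a genuinely different route from the paper's. The paper attacks the inequality directly: it bounds the kernel below via $(t-s)^{-d\beta/\alpha}\geq t^{-d\beta/\alpha}$, substitutes $q(t)=t^{d\beta/\alpha}h(t)$, and explicitly solves the comparison ODE $q'(t)/q(t)^{1+\gamma}=D\,t^{-(1+\gamma)d\beta/\alpha}$ on $[1,\infty)$; finiteness of the blow-up time there uses the full hypothesis $(1+\gamma)d\beta/\alpha<1$, since the antiderivative $t^{1-(1+\gamma)d\beta/\alpha}$ must diverge as $t\to\infty$. You instead recycle Proposition \ref{volterra} as a black box: on the window $[0,T]$ with $t_0=T/2$ its size threshold is $C_0(T)=\bigl(2T^{d\beta/\alpha-1}/(D\gamma)\bigr)^{1/\gamma}$, which tends to $0$ as $T\to\infty$ precisely because $d\beta/\alpha<1$, so any fixed $C>0$ eventually beats it; positivity of the kernel then propagates $h\equiv\infty$ past $T_\ast$. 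Two observations. First, your argument uses only $d\beta/\alpha<1$, which is strictly weaker than the stated hypothesis $(1+\gamma)d\beta/\alpha<1$; in particular it subsumes Remark \ref{rem:gamma-greater-than1}, which the paper needs as a separate reduction (lowering $\gamma$ to $\gamma_0$) when $(1+\gamma)d\beta/\alpha\geq 1$. Second, the subtlety you flag about Proposition \ref{volterra} giving $h=\infty$ on all of $[t_0,T]$ rather than divergence at a single instant is real and correctly resolved: once the comparison solution blows up at $t^\ast$, the integral $\int_0^{t^\ast}h(s)^{1+\gamma}\,\d s$ is already infinite, and the strictly positive kernel forces $h(t)=\infty$ at every later $t$ where the inequality holds — this is also the honest reading of the paper's loose phrase ``since $h(t)$ is increasing.'' What the paper's explicit computation buys is a closed-form blow-up time $t_0$ in terms of $C$, $D$, $\gamma$ and $d\beta/\alpha$ (which it reuses in the remark following the proposition), whereas your soft argument is shorter and slightly more general; if an explicit time is wanted, your $T_\ast$ can be extracted by solving $C=C_0(T_\ast)$, giving $t_0=T_\ast/2$ with $T_\ast=\bigl(2/(D\gamma C^{\gamma})\bigr)^{1/(1-d\beta/\alpha)}$.
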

\begin{proof}

Since $0<t-s \leq t$, we get
\begin{align*}
h(t)\geq C+D\int_0^t\frac{h(s)^{1+\gamma}}{t^{d\beta/\alpha}}\,\d s,\quad \text{for}\quad t>0.
\end{align*}
Now let $q(t):=t^{d\beta/\alpha}h(t)$ and since we can always assume $t_0>1$, the above inequality becomes
\begin{align*}
q(t)\geq C+D\int_0^t\frac{q(s)^{1+\gamma}}{s^{(1+\gamma)d\beta/\alpha}}\,\d s,\quad \text{for}\quad t\geq1.
\end{align*}
We only need to consider the following ordinary differential equation,
\begin{equation*}
\frac{q'(t)}{[q(t)]^{1+\gamma}}=\frac{D}{t^{(1+\gamma)d\beta/\alpha}}, \quad\text{for}\quad t\geq 1,
\end{equation*}
with initial condition $q(1)=C$. The solution of this equation is given by
$$\frac{1}{q(t)^{\gamma}} = \frac{1}{C^\gamma} +\frac{\gamma D}{1-\frac{(1+\gamma)d\beta}{\alpha}}- \frac{\gamma Dt^{1-\frac{(1+\gamma)d\beta}{\alpha}}}{1-\frac{(1+\gamma)d\beta}{\alpha}},\quad \text{for}\ t \geq 1.$$
Since $(1+\gamma)d\beta/\alpha<1$ the blowup occurs when $t$ is equal to $t_0$ given by
$$t_0: = \left(\frac{1-\frac{(1+\gamma)d\beta}{\alpha}}{\gamma D}\left(\frac{1}{C^\gamma} + \frac{\gamma D}{1-\frac{(1+\gamma)d\beta}{\alpha}}\right)\right)^{1/(1-\frac{(1+\gamma)d\beta}{\alpha})}.$$
Thus, $h(t) = \infty$ for $t\geq t_0$ since $h(t)$ is increasing on $(0, \infty).$

\end{proof}

\begin{remark}
The above Proposition \ref{rem-volterra} is also true when $h$ satisfies
\begin{align*}
h(t)\geq Ct^{-d\beta/\alpha} + D\int_{0}^{t}\frac{h(s)^{1+\gamma}}{(t-s)^{d\beta/\alpha}}\,\d s,\ \text{for}\ t>0.\end{align*}
In this case $t_0$ is given by

$$t_0 := \left(\frac{1-\frac{(1+\gamma)d\beta}{\alpha}}{\gamma DC^\gamma}\right)^{1/(1-\frac{(1+\gamma)d\beta}{\alpha})}.$$

\end{remark}
\begin{remark}\label{rem:gamma-greater-than1}
The above proposition holds when $(1+\gamma)/\alpha\geq1$ as well. This is because we can always write $\gamma=\gamma_0+(\gamma-\gamma_0)$ so that $\gamma_0<\gamma$ and $(1+\gamma_0)/\alpha<1$. Now we use the fact that $h(t)>A$ for all $t>0$ to reduce the integral inequality to
\begin{align*}
h(t)\geq C+ D\int_{0}^{t}\frac{h(s)^{1+\gamma}}{(t-s)^{d\beta/\alpha}}\,\d s\geq C+DC^{\gamma-\gamma_0}\int_0^t\frac{h(s)^{1+\gamma_0}}{(t-s)^{d\beta/\alpha}}\,\d s,\quad \text{for}\quad t>0.
\end{align*}
The result now follows by Proposition \ref{rem-volterra}.
\end{remark}

Next we state a general version of the proposition \ref{rem-volterra}.
\begin{proposition}\label{rem-volterra-general}
Let $0<\theta$. Suppose $h$ is a non-negative function satisfying the following non-linear integral inequality,
\begin{align*}
h(t)\geq C+D\int_0^t\frac{h(s)^{1+\gamma}}{(t-s)^{\theta}}\,\d s,\quad \text{for}\quad t>0
\end{align*}
where $C$, $D$ and $\gamma$ are positive numbers. Then for any $C>0$ there exists $t_0>0$ such that $h(t)=\infty$ for all $t\geq t_0$.
\end{proposition}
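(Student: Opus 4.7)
The strategy is to reuse the argument of Proposition \ref{rem-volterra} with the exponent $d\beta/\alpha$ simply relabelled as $\theta$, combined with a bootstrap in the spirit of Remark \ref{rem:gamma-greater-than1} to cover the remaining parameter ranges. The engine of the whole argument is the trivial observation that the hypothesis alone forces $h(t)\ge C>0$ for every $t>0$, so we always have an \emph{a priori} positive lower bound to feed back into the integral.

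The case $\theta\ge 1$ is essentially free: using $h(s)\ge C$ on $(0,t)$,
\begin{equation*}
h(t)\ge C+DC^{1+\gamma}\int_0^t \frac{\d s}{(t-s)^{\theta}}=+\infty
\end{equation*}
for every $t>0$, since the kernel $(t-s)^{-\theta}$ is non-integrable at $s=t$. Thus any $t_0>0$ works. For $0<\theta<1$, I would first treat the subcase $(1+\gamma)\theta<1$ by running the proof of Proposition \ref{rem-volterra} verbatim: bound $(t-s)^{-\theta}\ge t^{-\theta}$, set $q(t):=t^{\theta}h(t)$, rewrite the inequality as
\begin{equation*}
q(t)\ge Ct^{\theta}+D\int_0^t\frac{q(s)^{1+\gamma}}{s^{(1+\gamma)\theta}}\,\d s,\qquad t\ge 1,
\end{equation*}
and compare $q$ to the solution of the separable ODE $q'(t)=Dq(t)^{1+\gamma}/t^{(1+\gamma)\theta}$ with $q(1)=C$. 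Integration produces an explicit finite blow-up time
\begin{equation*}
t_0=\left(1+\frac{1-(1+\gamma)\theta}{\gamma D C^{\gamma}}\right)^{1/(1-(1+\gamma)\theta)},
\end{equation*}
because the assumption $1-(1+\gamma)\theta>0$ ensures the right-hand side of the integrated ODE eventually reaches zero. Since $q$ (and hence $h$) is monotone increasing along this comparison, the desired conclusion $h(t)=\infty$ for $t\ge t_0$ follows.

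The remaining subcase $0<\theta<1$, $(1+\gamma)\theta\ge 1$ is handled by the bootstrap of Remark \ref{rem:gamma-greater-than1}: pick any $\gamma_0\in(0,\gamma)$ with $(1+\gamma_0)\theta<1$, and use the pointwise lower bound $h(s)\ge C$ to write
\begin{equation*}
h(s)^{1+\gamma}=h(s)^{\gamma-\gamma_0}\,h(s)^{1+\gamma_0}\ge C^{\gamma-\gamma_0}\,h(s)^{1+\gamma_0}.
\end{equation*}
Substituting back into the hypothesis yields
\begin{equation*}
h(t)\ge C+DC^{\gamma-\gamma_0}\int_0^t\frac{h(s)^{1+\gamma_0}}{(t-s)^{\theta}}\,\d s,
\end{equation*}
which falls under the previous subcase with constants $(D,\gamma)$ replaced by $(DC^{\gamma-\gamma_0},\gamma_0)$, and the conclusion follows.

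There is no genuinely new analytic ingredient here beyond Proposition \ref{rem-volterra} and Remark \ref{rem:gamma-greater-than1}; the only potentially delicate point is checking that the bootstrap reduction is uniform in the starting value $C$, which is clear because $\gamma_0$ depends only on $\theta$ and $\gamma$. If anything is ``hard'' it is purely notational: one must keep track of how the constants $C$, $D$, $\gamma$ transform under the rescaling $q=t^{\theta}h$ and the bootstrap substitution, but no new estimate of the kernel or of $h$ is required.
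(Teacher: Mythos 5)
Your proof is correct and follows exactly the route the paper intends: the paper omits the proof of Proposition \ref{rem-volterra-general}, stating only that it is similar to Proposition \ref{rem-volterra} and Remark \ref{rem:gamma-greater-than1}, which is precisely the relabelling $d\beta/\alpha\mapsto\theta$ plus bootstrap that you carry out, down to the same change of variable $q(t)=t^{\theta}h(t)$ and the same separable ODE comparison. Your explicit treatment of the case $\theta\geq 1$, where the kernel $(t-s)^{-\theta}$ is non-integrable and blow-up is immediate from $h\geq C$, is a worthwhile small addition, since the bootstrap of Remark \ref{rem:gamma-greater-than1} needs some $\gamma_0>0$ with $(1+\gamma_0)\theta<1$ and hence only reaches $\theta<1$.
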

\begin{proof}
The proof is similar to the proof of Proposition \ref{rem-volterra} and Remark \ref{rem:gamma-greater-than1}. So it is omitted here.
\end{proof}

\section{Proof of Theorems \ref{theo1} and \ref{theo2}} \label{sect-3}
\begin{proof}[\bf Proof of Theorem \ref{theo1}.]
To start  the proof of the theorem with the use of the  mild formulation of the solution given by \eqref{mild}, then take second moment and use the Walsh isometry to get
\begin{align*}
\E|u_t(x)|^2&=|(\cG u)_t(x)|^2+\int_0^t\int_{\R^d} G^2_{t-s}(x-y)\E|\sigma(u_s(y))|^2\,\d y\,\d s\\
&:=I_1+I_2.
\end{align*}
Using the fact that the initial condition is bounded below gives
\begin{align*}
I_1\geq \kappa^2.
\end{align*}
This follows since $\int_{\rd}G_t(x-y)dy=1$.
By utilizing the growth condition on $\sigma,$ Jensen's inequality, and Lemma \ref{Lem:Green1} we bound $I_2$ as follows
\begin{align*}
I_2&\geq \int_0^t\left(\inf_{x\in {\R^d}}\E|u_s(x)|^2\right)^{\1+\gamma}\int_{\R^d} G^2_{t-s}(x-y)\,\d y\,\d s\\
&\geq C^\ast\int_0^t\left(\inf_{x\in {\R^d}}\E|u_s(x)|^2\right)^{\1+\gamma}\frac{1}{(t-s)^{\beta d/\alpha}}\d s.
\end{align*}
If we set
\begin{align*}
P(s):=\inf_{x\in \R^d}\E|u_s(x)|^2,
\end{align*}
the inequalities becomes
\begin{equation*}
P(t)\geq C+C^{\ast}\int_0^t\frac{P(s)^{1+\gamma}}{(t-s)^{d\beta/\alpha}}\,\d s.
\end{equation*}
Proposition \ref{volterra} now completes the proof of the theorem.
\end{proof}

\begin{proof}[Proof of Remark \ref{remark-general-gamma}]
Using Lemma 5.5 in \cite{le-chen-2019} we can get finite time blow up as in the proof of Theorem \ref{theo1}.

Here the nonlinear renewal inequality becomes
\begin{equation}\label{general-gamma}
P(s)\geq C+C_1\int_0^tP(s)^{1+\eta}(t-s)^\theta ds
\end{equation}
where  $P(s)=\inf_{x\in \R^d}\E |u_s(x)|^2$ and $\theta = 2(\beta+\gamma-1)-\d\beta/\alpha$. We have blow-up for $\theta<0$ by Proposition \ref{rem-volterra-general}. If $\theta>0$, we can show blow-up as follows: First note that $P(t)>C>0$. Suppose $P(t)<\infty$ for all $t$. Taking Laplace transform of both sides of \eqref{general-gamma} we get
\begin{equation*}
\begin{aligned}
\mathcal{L}\{P(t)\}(\lambda)) \geq \frac{C}{\lambda} + C_1\mathcal{L}\{P^{1+\eta}(t)\}(\lambda)\mathcal{L}\{t^\theta\}(\lambda),
\end{aligned}
\end{equation*}
where $\mathcal{L}\{P(t)\}(\lambda)) = \int_{0}^\infty e^{-\lambda t}P(t)dt$ represent Laplace transform. Using Jensen's inequality we have
\begin{eqnarray*}
 \mathcal{L}\{P^{1+\eta}(t)\}(\lambda) &=&\lambda \int_{0}^\infty P^{1+\eta}(t)\left(\frac{e^{-\lambda t}}{\lambda}\right)dt\\
 &\geq&\lambda\left(\int_{0}^\infty P(t)\left(\frac{e^{-\lambda t}}{\lambda}\right)dt\right)^{1+\eta}\\
 &=&\frac{1}{\lambda^\eta}\left[\mathcal{L}\{P(t)\}(\lambda)\right]^{1+\eta}.
\end{eqnarray*}
So we get
$$\mathcal{L}\{P(t)\}(\lambda)) \geq \frac{C}{\lambda} +\frac{C_1\Gamma(\beta+1)\left[\mathcal{L}\{P(t)\}(\lambda)\right]^{1+\eta}}{\lambda^{\eta+\theta+1}}\geq\frac{C^*\left[\mathcal{L}\{P(t)\}(\lambda)\right]^{1+\eta}}{\lambda^{\eta+\theta+1}}.$$
Thus,
$$\mathcal{L}\{P(t)\}(\lambda)) \leq \lambda^{(\eta+\theta+1)/\eta}\rightarrow 0\ \  \text{as}\  \lambda\rightarrow 0.$$
That means,
$$\lim_{\lambda\rightarrow 0}\mathcal{L}\{P(t)\}(\lambda)) =\lim_{\lambda\rightarrow 0} \int_{0}^{\infty}e^{-\lambda t}P(t)dt = \int_{0}^{\infty}P(t)dt = 0.$$
Hence, $P(t) = 0$ a.s. which contradicts $P(t)>C>0.$
\end{proof}

For  the proof of Theorem \ref{theo2} we need the following proposition.
\begin{proposition}\label{prop-colored}
Suppose that there exists a $\kappa_0>0$ and $t_0>0$ such that the lower bound of $u_0$ in \eqref{minumum-values-u0} satisfies $\kappa>\kappa_0$.  Then  for all $t_0<t\leq (1/2)^\alpha$, we have
\begin{align*}
\E|u_t(x)u_t(y)|=\infty\quad\text{for all},\quad   x,\,y\in B(0,\,1).
\end{align*}
\end{proposition}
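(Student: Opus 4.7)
The plan is to mimic the proof of Theorem \ref{theo1}, but working with the two-point quantity $\E|u_t(x)u_t(y)|$ rather than the second moment, since for the colored-noise equation \eqref{eq:colored} this is what naturally appears after applying the Walsh isometry. First I would start from the mild formulation \eqref{mild} for the colored-noise case, take the product $u_t(x)u_t(y)$, and apply the isometry to write
\begin{equation*}
\E[u_t(x)u_t(y)] = (\cG u)_t(x)(\cG u)_t(y) + \int_0^t \int_{\R^d \times \R^d} G_{t-s}(x-y_1) G_{t-s}(y-y_2)\, \E[\sigma(u_s(y_1))\sigma(u_s(y_2))]\, f(y_1,y_2)\, \d y_1\, \d y_2\, \d s.
\end{equation*}
Since $\int_{\R^d} G_t(z)\,\d z = 1$ and $u_0 \geq \kappa$, the first term is bounded below by $\kappa^2$.

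Next I would estimate the stochastic contribution. Restricting the inner integration to $y_1,y_2 \in B(0,1)$ and applying the growth condition \eqref{growth} together with Jensen's inequality gives
\begin{equation*}
\E[\sigma(u_s(y_1))\sigma(u_s(y_2))] \geq \E|u_s(y_1)u_s(y_2)|^{1+\gamma} \geq (\E|u_s(y_1)u_s(y_2)|)^{1+\gamma} \geq H(s)^{1+\gamma},
\end{equation*}
where $H(s) := \inf_{x,y \in B(0,1)} \E|u_s(x)u_s(y)|$. Then I would invoke Proposition \ref{prop-kernel} with $R=1$, which applies precisely because we restrict to $t \leq (1/2)^\alpha$, to obtain
\begin{equation*}
\int_{B(0,1)\times B(0,1)} G_{t-s}(x-y_1) G_{t-s}(y-y_2) f(y_1,y_2) \,\d y_1\,\d y_2 \geq c K_f.
\end{equation*}
Taking the infimum over $x,y \in B(0,1)$ on the left-hand side of the displayed identity yields the renewal-type inequality
\begin{equation*}
H(t) \geq \kappa^2 + c K_f \int_0^t H(s)^{1+\gamma}\,\d s, \qquad 0 < t \leq (1/2)^\alpha.
\end{equation*}

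Finally I would apply Proposition \ref{volterra} (or equivalently solve the associated ODE $h'(t) = cK_f h(t)^{1+\gamma}$ directly, which has an explicit blow-up at time $1/(\gamma c K_f \kappa^{2\gamma})$) to conclude blow-up. Given $t_0 < (1/2)^\alpha$, choosing $\kappa_0$ large enough so that $\kappa_0^{2\gamma} > 1/(\gamma c K_f t_0)$ ensures that the blow-up time is strictly less than $t_0$ whenever $\kappa > \kappa_0$. Since $H$ is monotonically increasing on its interval of existence, this gives $H(t) = \infty$, hence $\E|u_t(x)u_t(y)| = \infty$, for all $t_0 < t \leq (1/2)^\alpha$ and all $x,y \in B(0,1)$. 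The only delicate point is the interplay between the localization to $B(0,1)$ (needed to exploit Assumption \ref{color} through Proposition \ref{prop-kernel}) and the time restriction $t \leq (1/2)^\alpha$ (needed so that the kernel mass inside this ball does not vanish); everything else is a routine application of the isometry, Jensen, and the explicit Volterra comparison.
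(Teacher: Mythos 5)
Your proposal is correct and follows essentially the same route as the paper's own proof: lower-bound the two-point function via the mild formulation, use $u_0\geq\kappa$ for the deterministic term, apply the growth condition on $\sigma$ with Jensen's inequality, invoke Proposition \ref{prop-kernel} (with $R=1$, using $t\leq(1/2)^\alpha$) to get the renewal inequality $H(t)\geq \kappa^2+cK_f\int_0^t H(s)^{1+\gamma}\,\d s$, and conclude via Proposition \ref{volterra} by taking $\kappa_0$ large enough that blow-up occurs before $t_0$. Your version merely makes explicit the isometry/covariance identity and the ODE blow-up time, which the paper leaves implicit.
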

\begin{proof}
By the mild formulation \eqref{mild} we observe
\begin{align*}
\E|&u_t(x)u_t(y)|\\
&\geq \cG u_t(x)\cG u_t(y)+\int_0^t\int_{\R^d\times\R^d}G_{t-s}(x-z)G_{t-s}(y-w)f(z,w)\left(\E|u_s(z)u_s(w)|\right)^{1+\gamma}\,\d z\d w\d s\\
&:=I_1+I_2.
\end{align*}
First consider the term $I_1$. Using  the fact that the initial condition is bounded below by $\kappa$ gives
\begin{equation*}
I_1\geq \kappa^2.
\end{equation*}
By Proposition \ref{prop-kernel} for $t<\left(\frac{1}{2}\right)^\alpha$ the second part becomes
\begin{align*}
I_2&\geq \int_0^t\left(\inf_{z,\,w\in B(0,\,1)}\E|u_s(z)u_s(w)|\right)^{1+\gamma}\int_{B(0,\,1)\times B(0,\,1)}G_{t-s}(x-z)G_{t-s}(y-w)f(z,w)\,\d z\d w\d s\\
&\geq c_1K_f\int_0^t\left(\inf_{z,\,w\in B(0,\,1)}\E|u_s(z)u_s(w)|\right)^{1+\gamma} \d s.
\end{align*}
Letting
\begin{equation*}
H(s):=\inf_{x,\,y\in B(0,\,1)}\E|u_s(x)u_s(y)|,
\end{equation*}
and combining the above estimates, we have
\begin{align*}
H(t)\geq c^2\kappa^2+c_1K_f\int_0^tH(s)^{1+\gamma}\,\d s, \quad\text{for}\quad t\leq\left(\frac{1}{2}\right)^{\alpha}.
\end{align*}
By taking $\kappa$ big enough, we can make sure that $t_0$ is as small as we wish by Proposition \ref{volterra}. This finishes the proof of the proposition.
\end{proof}

\begin{proof}[\bf Proof of Theorem \ref{theo2}]
We can now easily prove the theorem. From the mild formulation and Proposition \ref{prop-colored}, we have
\begin{align*}
\E|&u_t(x)|^2\\
&\geq c^2\kappa^2+\int_{0}^t\int_{B(0,\,1)\times B(0,\,1)}G_{t-s}(x-y)G_{t-s}(x-w)f(y,w)\left(\E|u_s(y)u_s(w)|\right)^{1+\gamma}\d y\d w\d s\\
&\geq c^2\kappa^2+\int_{t_0}^t\int_{B(0,\,1)\times B(0,\,1)}G_{t-s}(x-y)G_{t-s}(x-w)f(y,w)\left(\E|u_s(y)u_s(w)|\right)^{1+\gamma}\d y\d w\d s\\
&=\infty,
\end{align*}
when $\kappa$ is large.
\end{proof}

\section{Proof of Theorem \ref{Theo3}}\label{sect-4}
Before presenting the proof of our theorem, we need to give some important results given in the propositions bellow. In the remainder of this section,  $u_t$ will be the solution to \eqref{eq:colored} and
the correlation function is always given by the Riesz kernel, that is
\begin{align*}
f(x,y)=\frac{1}{|x-y|^\omega},\ \ \ \omega< d\wedge (\alpha\beta^{-1}).
\end{align*}

\begin{proposition}
For $x,\,y\in B(0,\,t^{\beta/\alpha}),$ there exists a constant $c$ such that
\begin{align*}
\int_{\R^d\times\R^d}G_t(x-z)G_t(y-w)f(z,w)\,\d z\d w\geq \frac{c}{t^{\omega\beta/\alpha}}.
\end{align*}
\end{proposition}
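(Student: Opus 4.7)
The plan is to restrict the integration to the ball $B(0,t^{\beta/\alpha})$ where the off-diagonal lower bound on $G_t$ in \eqref{heat} becomes effective, and then exploit the scale invariance of the Riesz kernel. First I would observe that when $x,z\in B(0,t^{\beta/\alpha})$ we have $|x-z|\leq 2t^{\beta/\alpha}$, so
$$\frac{t^\beta}{|x-z|^{d+\alpha}}\geq \frac{t^\beta}{(2t^{\beta/\alpha})^{d+\alpha}}=c_0\, t^{-\beta d/\alpha},$$
and hence the minimum appearing in \eqref{heat} is at least $c\, t^{-\beta d/\alpha}$. Thus $G_t(x-z)\geq c\, t^{-\beta d/\alpha}$ uniformly on this set, and the same bound holds for $G_t(y-w)$ when $y,w\in B(0,t^{\beta/\alpha})$.

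Next I would restrict the double integral to $B(0,t^{\beta/\alpha})\times B(0,t^{\beta/\alpha})$ and pull out the two pointwise bounds:
$$\int_{\R^d\times\R^d}G_t(x-z)G_t(y-w)f(z,w)\,\d z\,\d w\geq c\, t^{-2\beta d/\alpha}\int_{B(0,t^{\beta/\alpha})\times B(0,t^{\beta/\alpha})}\frac{\d z\,\d w}{|z-w|^\omega}.$$
A change of variables $z=t^{\beta/\alpha}u$, $w=t^{\beta/\alpha}v$ gives
$$\int_{B(0,t^{\beta/\alpha})\times B(0,t^{\beta/\alpha})}\frac{\d z\,\d w}{|z-w|^\omega}=t^{\beta(2d-\omega)/\alpha}\int_{B(0,1)\times B(0,1)}\frac{\d u\,\d v}{|u-v|^\omega},$$
and the last integral is a finite positive constant precisely because $\omega<d$. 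Multiplying through yields the exponent $-2\beta d/\alpha+\beta(2d-\omega)/\alpha=-\omega\beta/\alpha$, which is the desired bound.

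There is no serious obstacle here. The only thing to be careful about is verifying that the minimum in \eqref{heat} is actually controlled from below by $c t^{-\beta d/\alpha}$ on the chosen domain (which is exactly the computation above), and that the hypothesis $\omega<d$ is what makes the unit-ball Riesz integral finite. The auxiliary restriction $\omega<\alpha\beta^{-1}$ is not needed for this proposition; it will be used later in Section \ref{sect-4} to make the resulting renewal-type inequality blow up in finite time via Proposition \ref{rem-volterra-general}.
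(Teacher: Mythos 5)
Your proof is correct, and its skeleton---restricting the double integral to $B(0,\,t^{\beta/\alpha})\times B(0,\,t^{\beta/\alpha})$ and replacing each factor $G_t$ by the lower bound $c\,t^{-\beta d/\alpha}$ coming from \eqref{heat} (valid there since $|x-z|\leq 2t^{\beta/\alpha}$)---is exactly the paper's. The two arguments part ways only in how the remaining Riesz integral is handled: you evaluate it exactly via the scaling $z=t^{\beta/\alpha}u$, $w=t^{\beta/\alpha}v$, which produces the factor $t^{\beta(2d-\omega)/\alpha}$ and requires $\omega<d$ so that $\int_{B(0,\,1)\times B(0,\,1)}|u-v|^{-\omega}\,\d u\,\d v$ is finite, whereas the paper simply bounds the kernel pointwise from below, $f(z,w)=|z-w|^{-\omega}\geq c^\ast t^{-\omega\beta/\alpha}$ on the ball (again because $|z-w|\leq 2t^{\beta/\alpha}$), and integrates the resulting constant over a set of measure $c\,t^{2\beta d/\alpha}$. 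The exponents come out identically, but the paper's variant is slightly more economical: it uses neither $\omega<d$ nor $\omega<\alpha\beta^{-1}$ at this stage (both conditions only enter later, in making the renewal-type inequality effective), while your route genuinely needs $\omega<d$ for the finiteness of the unit-ball integral. Your closing remark that $\omega<\alpha\beta^{-1}$ is not used in this proposition is accurate and consistent with the paper.
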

\begin{proof}
By the bounds given by \eqref{heat} we observe
\begin{align*}
\int_{\R^d\times\R^d}G_t(x-z)&G_t(y-w)f(z,w)\,\d z\d w\\
&\geq \int_{B(0,\,t^{\beta/\alpha})\times B(0,\,t^{\beta/\alpha})}G_t(x-z)G_t(y-w)f(z,w)\,\d z\d w\\
&\geq \frac{c_1}{t^{2d\beta/\alpha}}\int_{B(0,\,t^{\beta/\alpha})\times B(0,\,t^{\beta/\alpha})}f(z,w)\,\d z\d w\\
&\geq \frac{c_1}{t^{2d\beta/\alpha}}\frac{c^\ast}{t^{\omega\beta/\alpha}}\int_{B(0,\,t^{\beta/\alpha})\times B(0,\,t^{\beta/\alpha})}\,\d z\d w\\
& = \frac{c}{t^{\omega\beta/\alpha}}.
\end{align*}
The last inequality follows since $|z-w|\leq  2t^{\beta/\alpha}$ for $z,\,w\in B(0,\,t^{\beta/\alpha})$.
\end{proof}
The following proposition  now easily follows by the last result.
\begin{proposition}\label{correlation-lower-bound}
For fixed $t>0$, we have
\begin{align*}
\E|u_t(x)u_t(y)|\geq ct^{(\alpha-\omega\beta)/\alpha},\quad\text{for all}\quad x,\,y\in B(0,\,t^{\beta/\alpha}),
\end{align*}
where $c$ is some constant.
\end{proposition}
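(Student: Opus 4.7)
The plan is to bound $\E|u_t(x)u_t(y)|$ from below by exploiting the mild formulation \eqref{mild}, the positivity of $\sigma$, and the spatial integral estimate given in the immediately preceding proposition. First I would take the product of the two representations of $u_t(x)$ and $u_t(y)$ in \eqref{mild}, take expectation, and use the Walsh covariance formula for the two stochastic integrals to obtain
$$\E[u_t(x)u_t(y)] = (\cG u)_t(x)(\cG u)_t(y) + \int_0^t\int_{\R^d\times\R^d}G_{t-s}(x-z)G_{t-s}(y-w)f(z,w)\E[\sigma(u_s(z))\sigma(u_s(w))]\,\d z\d w\d s.$$
Assumption~1 gives $\sigma\ge 0$, so the entire right-hand side is non-negative and one may replace $\E[u_t(x)u_t(y)]$ by $\E|u_t(x)u_t(y)|$. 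I would then discard the deterministic term and bound the integrand from below pointwise by $\sigma(u_s(z))\sigma(u_s(w))\ge |u_s(z)u_s(w)|^{1+\gamma}$. Jensen's inequality yields $\E[|u_s(z)u_s(w)|^{1+\gamma}]\ge(\E|u_s(z)u_s(w)|)^{1+\gamma}$, and applying the same covariance decomposition to $\E[u_s(z)u_s(w)]$ (again using $\sigma\ge 0$) gives $\E|u_s(z)u_s(w)|\ge(\cG u)_s(z)(\cG u)_s(w)\ge\kappa^2$. Hence $\E[\sigma(u_s(z))\sigma(u_s(w))]\ge\kappa^{2(1+\gamma)}$ uniformly in $s,z,w$.

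Next I would handle the spatial double integral. The preceding proposition gives the bound $c/t^{\omega\beta/\alpha}$ for arguments in $B(0,t^{\beta/\alpha})$ with kernel $G_t$, but here the kernel is $G_{t-s}$ while the arguments $x,y$ only lie in $B(0,t^{\beta/\alpha})$. I would therefore redo the estimate along the same lines: restrict $z$ to $B(x,(t-s)^{\beta/\alpha})$ and $w$ to $B(y,(t-s)^{\beta/\alpha})$, apply the lower bound \eqref{heat} to obtain $G_{t-s}(x-z)G_{t-s}(y-w)\ge c_1(t-s)^{-2d\beta/\alpha}$ on this product, and bound the Riesz factor by the triangle inequality
$$|z-w|\le|z-x|+|x-y|+|y-w|\le 2(t-s)^{\beta/\alpha}+2t^{\beta/\alpha}\le 4t^{\beta/\alpha},$$
which gives $f(z,w)\ge c^{\ast}t^{-\omega\beta/\alpha}$. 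Multiplying by the product of the ball volumes $\sim(t-s)^{2d\beta/\alpha}$, the $(t-s)$-factors cancel and one obtains the uniform-in-$s$ estimate
$$\int_{\R^d\times\R^d}G_{t-s}(x-z)G_{t-s}(y-w)f(z,w)\,\d z\d w\ge\frac{c_2}{t^{\omega\beta/\alpha}},\qquad s\in[0,t].$$

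Combining the two bounds and integrating over $s\in[0,t]$ gives
$$\E|u_t(x)u_t(y)|\ge c_2\kappa^{2(1+\gamma)}\int_0^t\frac{\d s}{t^{\omega\beta/\alpha}}=ct^{1-\omega\beta/\alpha}=ct^{(\alpha-\omega\beta)/\alpha},$$
as desired. The only subtle point—and the main obstacle in a polished write-up—is the adaptation of the previous proposition to yield a bound uniform in $s$ on the whole interval $[0,t]$: a naive substitution $t\mapsto t-s$ would demand $x,y\in B(0,(t-s)^{\beta/\alpha})$ and fail as $s\uparrow t$. The trick is that the Riesz singularity is controlled by the outer separation $|x-y|\le 2t^{\beta/\alpha}$ (a $t$-scale quantity), whereas the heat kernel's length scale $(t-s)^{\beta/\alpha}$ only enters through the ball volumes, which cancel against the kernel's height. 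The hypothesis $\omega<\alpha/\beta$ finally guarantees $(\alpha-\omega\beta)/\alpha>0$, so the bound is genuine and not vacuous.
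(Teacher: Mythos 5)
Your proof is correct, and its skeleton --- mild formulation plus Walsh covariance, dropping the deterministic term, the growth condition on $\sigma$ together with Jensen to get $\E[\sigma(u_s(z))\sigma(u_s(w))]\geq (\E|u_s(z)u_s(w)|)^{1+\gamma}\geq \kappa^{2(1+\gamma)}$, then a spatial kernel lower bound integrated in time --- is the same as the paper's. The one place you genuinely diverge is precisely the step you flag as subtle, and you are right to flag it: the paper's proof is two lines, bounding $\E[\sigma\sigma]$ below by a power of $\kappa$ and then invoking the preceding proposition to write $\int_0^t\int_{\R^d\times\R^d} G_s(x-z)G_s(y-w)f(z,w)\,\d z\,\d w\,\d s\geq c\int_0^t s^{-\omega\beta/\alpha}\,\d s=c't^{1-\omega\beta/\alpha}$, where integrability at $s=0$ uses $\omega<\alpha\beta^{-1}$. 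Taken literally, this applies the preceding proposition at kernel scale $s$ to points $x,y\in B(0,t^{\beta/\alpha})$, i.e., outside its stated hypothesis $x,y\in B(0,s^{\beta/\alpha})$ --- exactly the mismatch you identify. Your repair (balls of radius $(t-s)^{\beta/\alpha}$ centered at $x$ and $y$, whose volumes cancel the kernel height $(t-s)^{-\beta d/\alpha}$ from \eqref{heat}, with the Riesz factor controlled by the $t$-scale separation $|z-w|\leq 4t^{\beta/\alpha}$) yields the uniform-in-$s$ bound $c_2t^{-\omega\beta/\alpha}$ and hence the same final exponent $t^{1-\omega\beta/\alpha}=t^{(\alpha-\omega\beta)/\alpha}$; it is the natural way to make the paper's citation rigorous, and as a bonus it does not rely on integrability of $s^{-\omega\beta/\alpha}$ at the origin, so the hypothesis $\omega<\alpha\beta^{-1}$ is needed only to make the exponent positive, as you note. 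One cosmetic remark: the paper writes the prefactor as $\kappa^{1+\lambda}$ (a slip for $\kappa^{2(1+\gamma)}$, consistent with your computation), so your bookkeeping is in fact the cleaner of the two.
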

\begin{proof}
Since initial condition is non-negative and  $\E|u_t(x)u_t(y)|\geq \kappa$, then by the above proposition, we get
\begin{align*}
\E|u_t(x)u_t(y)|&\geq \kappa^{1+\lambda}\int_0^t\int_{\R^d\times\R^d}G_s(x-z)G_s(y-w)f(z,w)\,\d z\d w\d s\\
&\geq ct^{(\alpha-\omega\beta)/\alpha}.
\end{align*}
\end{proof}

To give the proof of Theorem \ref{Theo3} we will need the following proposition.

\begin{proposition}
Fix $t>0$ and let $t_0\leq t/3$. Then for $x,\,y\in B(0,\,t^{\beta/\alpha})$, we have
\begin{align*}
\int_0^t\int_{\R^d\times\R^d}G_{t+t_0-s}(x-z)G_{t+t_0-s}(y-w)\E|u_s(z)u_s(w)|^{1+\gamma}f(z,w)\,\d z\,\d w\geq ct^{2(\alpha-\omega\beta)/\alpha},
\end{align*}
for some constant $c$.
\end{proposition}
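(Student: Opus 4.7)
The idea is to estimate the integral from below by restricting to a tractable subdomain of $(s, z, w)$-space where every factor admits a clean polynomial lower bound, and then to do bookkeeping of exponents.

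First I would restrict the integration to $s \in [0, t/2]$ and $(z, w) \in B(0, s^{\beta/\alpha}) \times B(0, s^{\beta/\alpha})$. Because $t_0 \leq t/3$ and $s \leq t/2$, the time parameter obeys $t/2 \leq t+t_0-s \leq 4t/3$, so $(t+t_0-s)^{\beta/\alpha}$ is comparable to $t^{\beta/\alpha}$. For $x \in B(0, t^{\beta/\alpha})$ and $z \in B(0, s^{\beta/\alpha})$ the separation satisfies $|x-z| \leq 2 t^{\beta/\alpha}$, so $|x-z|$ is also comparable to $(t+t_0-s)^{\beta/\alpha}$. A quick case analysis of the minimum in the two-sided Green-function bound \eqref{heat} then gives
\[
G_{t+t_0-s}(x-z) \;\geq\; c\, t^{-\beta d/\alpha},
\]
and similarly $G_{t+t_0-s}(y-w) \geq c\, t^{-\beta d/\alpha}$, uniformly over the chosen subdomain.

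Next, on $B(0, s^{\beta/\alpha})^2$, the inequality $|z-w| \leq 2 s^{\beta/\alpha}$ yields $f(z,w) = |z-w|^{-\omega} \geq c\, s^{-\omega\beta/\alpha}$. The moment factor is controlled by Proposition \ref{correlation-lower-bound}: for $z, w \in B(0, s^{\beta/\alpha})$, $\E|u_s(z) u_s(w)| \geq c\, s^{(\alpha-\omega\beta)/\alpha}$, and Jensen's inequality applied to the convex map $r \mapsto r^{1+\gamma}$ upgrades this to
\[
\E|u_s(z) u_s(w)|^{1+\gamma} \;\geq\; c\, s^{(\alpha-\omega\beta)(1+\gamma)/\alpha}.
\]

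Inserting these four lower bounds together with the volume identity $|B(0, s^{\beta/\alpha})|^2 = c\, s^{2d\beta/\alpha}$, the inner double integral over $(z,w)$ is bounded below by $c\, t^{-2\beta d/\alpha}\, s^{E}$, where
\[
E \;=\; \frac{(\alpha-\omega\beta)(1+\gamma) - \omega\beta + 2d\beta}{\alpha}.
\]
The hypothesis $\omega < d \wedge (\alpha\beta^{-1})$ makes $E > -1$, so integration over $s \in [0, t/2]$ produces a factor proportional to $t^{E+1}$. An elementary simplification gives
\[
E + 1 - \tfrac{2\beta d}{\alpha} \;=\; \frac{(2+\gamma)(\alpha-\omega\beta)}{\alpha} \;\geq\; \frac{2(\alpha-\omega\beta)}{\alpha},
\]
yielding the claimed lower bound $c\, t^{2(\alpha-\omega\beta)/\alpha}$.

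The main obstacle I anticipate is verifying that the two-sided heat-kernel estimate really does deliver the uniform lower bound of order $t^{-\beta d/\alpha}$ throughout the chosen region, since part of the domain may sit in the ``$|x-z| > (t+t_0-s)^{\beta/\alpha}$'' branch of the minimum in \eqref{heat}. This is resolved by the estimate $(t+t_0-s)^{\beta} |x-z|^{-(d+\alpha)} \geq c (t+t_0-s)^{\beta}\, t^{-\beta(d+\alpha)/\alpha} \asymp c\, t^{-\beta d/\alpha}$, which exploits both $|x-z| \lesssim t^{\beta/\alpha}$ and $t+t_0-s \asymp t$; everything else is a routine bookkeeping of powers.
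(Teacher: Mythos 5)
Your decomposition is a legitimate variant of the paper's argument, and most of the bookkeeping checks out: restricting to $s\in[0,t/2]$ and $z,w\in B(0,s^{\beta/\alpha})$ does give $t+t_0-s\asymp t$, hence $G_{t+t_0-s}(x-z)\geq c\,t^{-\beta d/\alpha}$ via \eqref{heat} (your resolution of the second branch of the minimum is correct), the hypothesis $\omega<d\wedge(\alpha\beta^{-1})$ indeed makes the $s$-integral convergent, and the exponent identity $E+1-2\beta d/\alpha=(2+\gamma)(\alpha-\omega\beta)/\alpha$ is right. The paper instead localizes in the window $s\in[(t+t_0)/2,\,3(t+t_0)/4]$ (using the elementary observations $s\geq t+t_0-s$ and $s\leq 3(t+t_0-s)$ there) with balls of radius $(t+t_0-s)^{\beta/\alpha}$; that difference is essentially cosmetic.

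The genuine gap is in your last line. You arrive at the lower bound $c\,t^{(2+\gamma)(\alpha-\omega\beta)/\alpha}$ and pass to the claimed $c\,t^{2(\alpha-\omega\beta)/\alpha}$ by comparing exponents; but since $(2+\gamma)(\alpha-\omega\beta)/\alpha>2(\alpha-\omega\beta)/\alpha>0$, the inequality $t^{(2+\gamma)(\alpha-\omega\beta)/\alpha}\geq t^{2(\alpha-\omega\beta)/\alpha}$ holds only for $t\geq1$ and reverses for $0<t<1$. As written you therefore prove the proposition only for $t\geq 1$ (which happens to suffice for its application in the proof of Theorem \ref{Theo3}, where $T\gg1$, but not for the statement as given, which fixes an arbitrary $t>0$ with a uniform constant). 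The source of the discrepancy is that you raised the polynomial moment bound to the full power $1+\gamma$. The paper avoids this by spending the extra power on the constant lower bound for the moment: it writes $\left(\E|u_s(z)u_s(w)|\right)^{1+\gamma}=\left(\E|u_s(z)u_s(w)|\right)^{\gamma}\,\E|u_s(z)u_s(w)|\geq \kappa^{\gamma}\,c\,s^{(\alpha-\omega\beta)/\alpha}$, using Proposition \ref{correlation-lower-bound} only once. Running your own scheme with this replacement changes $E$ to $E'=(\alpha-2\omega\beta+2d\beta)/\alpha$ and yields exactly $E'+1-2\beta d/\alpha=2(\alpha-\omega\beta)/\alpha$, uniformly in $t>0$. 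Alternatively, you can patch small times separately: for $t\leq 1$ bound the moment factor below by a constant, which gives $c\,t^{(\alpha-\omega\beta)/\alpha}\geq c\,t^{2(\alpha-\omega\beta)/\alpha}$ on that range. Either repair closes the gap. A minor point: no Jensen step is needed for the moment factor if, as the paper's usage indicates, $\E|u_s(z)u_s(w)|^{1+\gamma}$ denotes $\left(\E|u_s(z)u_s(w)|\right)^{1+\gamma}$; monotonicity of $r\mapsto r^{1+\gamma}$ suffices, while Jensen is what produces this quantity from $\E\left[|u_s(z)u_s(w)|^{1+\gamma}\right]$ upstream.
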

\begin{proof}
First observe that, if $s\geq(t+t_0)/2$, then  $s\geq t-s+t_0$ and also, if $s\leq3(t+t_0)/4$, then $s\leq 3(t-s+t_0).$ Using this, the fact that $\E|u_t(x)u_t(y)|\geq \kappa$ and  Proposition \ref{correlation-lower-bound}, we write
\begin{align*}
\int_0^t&\int_{\R^d\times\R^d}G_{t+t_0-s}(x-z)G_{t+t_0-s}(y-w)\E|u_s(z)u_s(w)|^{1+\gamma}f(z,w)\,\d z\,\d w\,\d s\\
&\geq \kappa^{\gamma}\int_0^ts^{(\alpha-2\omega\beta)/\alpha}\int_{B(0,\,s^{\beta/\alpha})\times B(0,\,s^{\beta/\alpha})}G_{t+t_0-s}(x-z)G_{t+t_0-s}(y-w)\,\d z\,\d w \,\d s\\
&\geq \kappa^{\gamma}\int_{(t+t_0)/2}^{3(t+t_0)/4}s^{(\alpha-2\omega)/\alpha}\\
&\times\int_{B(0,\,(t+t_0-s)^{\beta/\alpha})\times B(0,\,(t+t_0-s)^{\beta/\alpha})}G_{t+t_0-s}(x-z)G_{t+t_0-s}(y-w)\,\d z\,\d w \,\d s.
\end{align*}
Note that
\begin{align*}
|x-z|&\leq t^{\beta/\alpha}+(t-s+t_0)^{\beta/\alpha}\\
&\leq (t-s+t_0+s)^{\beta/\alpha}+(t-s+t_0)^{\beta/\alpha}\\
&\leq c_1(t-s+t_0)^{\beta/\alpha},
\end{align*}
for some constant $c_1$.  The last inequality is true since $f(t) = t^{\beta/\alpha}$ is increasing for $t>0$ and $s\leq  3(t-s+t_0)$ in our last integral above. By the bound on $G_t(x)$ in \eqref{heat}, we get the following not sharp bound which is sufficient for our needs:
\begin{align*}
\int_{(t+t_0)/2}^{3(t+t_0)/4}s^{(\alpha-2\omega\beta)/\alpha}&\int_{B(0,\,(t+t_0-s)^{\beta/\alpha})\times B(0,\,(t+t_0-s)^{\beta/\alpha})}G_{t+t_0-s}(x-z)G_{t+t_0-s}(y-w)\,\d z\,\d w \,\d s\\
&\geq\int_{(t+t_0)/2}^{3(t+t_0)/4}s^{(\alpha-2\omega\beta)/\alpha}c_1(t-s+t_0)^{-d\beta/\alpha}\\
&\times\int_{B(0,\,(t+t_0-s)^{\beta/\alpha})\times B(0,\,(t+t_0-s)^{\beta/\alpha})}\,\d z\,\d w \,\d s\\
&\geq c_2t^{2(\alpha-\omega\beta)/\alpha}.
\end{align*}
\end{proof}

\begin{proof}[\bf Proof of Theorem \ref{Theo3}]
With the above propositions at  hand, we are ready to give the proof of our theorem. By the mild formulation, the fact that initial condition is bounded below and change of variables give
\begin{align}\label{time shift}
\E |u_{T+t}(x)&u_{T+t}(y)|\nonumber\\
&\geq \kappa^2+\int_0^{T+t}\int_{\R^d\times \R^d}G_{T+t-s}(x-z)G_{T+t-s}(y-w)\E|u_s(z)u_s(w)|^{1+\lambda}f(z,w)\d z\d w\d s\nonumber\\
&= \kappa^2+\int_0^{T}\int_{\R^d\times \R^d}G_{T+t-s}(x-z)G_{T+t-s}(y-w)\E|u_s(z)u_s(w)|^{1+\gamma}f(z,w)\d z\d w\d s\nonumber\\
&+\int_0^{t}\int_{\R^d\times \R^d}G_{t-s}(x-z)G_{t-s}(y-w)\E|u_{T+s}(z)u_{T+s}(w)|^{1+\gamma}f(z,w)\d z\d w\d s\nonumber\\
&\geq \kappa^2+\int_0^{T}\int_{\R^d\times \R^d}G_{T+t-s}(x-z)G_{T+t-s}(y-w)\E|u_s(z)u_s(w)|^{1+\gamma}f(z,w)\d z\d w\d s\nonumber\\
&+\int_0^{t}\int_{B(0,1)\times B(0,1)}G_{t-s}(x-z)G_{t-s}(y-w)\E|u_{T+s}(z)u_{T+s}(w)|^{1+\gamma}f(z,w)\d z\d w\d s.
\end{align}
Take $T\gg 1$ and $t\leq T/3$, so that we can use the previous Proposition to bound the second term. To bound  the third term, we use similar arguments as in the proof of Theorem \ref{theo2}. If we let
\begin{align*}
Q(s):=\inf_{x,\,y\in B(0,\,1)}\E |u_{T+s}(x)u_{T+s}(y)|,
\end{align*}
we observe
\begin{align*}
Q(t)\geq \kappa^2+cT^{2(\alpha-\omega\beta)/\alpha}+c_1\int_0^{t}Q(s)^{1+\gamma}\,\d s.
\end{align*}
It suffices to consider, the following differential equation
$$\frac{Q'(t)}{Q(t)^{1+\gamma}} = c_1,$$
with initial condition $Q(0) = \kappa^2+cT^{2(\alpha-\omega\beta)/\alpha}:=A$. Solving this equation, we get
$$\frac{1}{Q(t)^{1+\gamma}} = \frac{1}{A^\gamma} - c_1\gamma t.$$
The blow up occurs at $t = \frac{1}{c_1\gamma A^\gamma}.$ That means, as long as $\kappa$ is strictly positive, we will have blow up of $Q$ for any fixed small time; we just need to take $T$ large enough. To finish the proof we use the mild formulation and the above result to write
\begin{align}\label{time-shift-solution}
&\E |u_{T+t}(x)|^2\nonumber\\
&\geq c^2\kappa^2+\int_0^{T+t}\int_{\R^d\times \R^d}G_{T+t-s}(x-z)G_{T+t-s}(y-w)(\E|u_s(z)u_s(w)|)^{1+\gamma}f(z,w)\d z\d w\d s\nonumber\\
&\geq c^2\kappa^2+\int_T^{T+t}\int_{\R^d\times \R^d}G_{T+t-s}(x-z)G_{T+t-s}(y-w)(\E|u_s(z)u_s(w)|)^{1+\gamma}f(z,w)\d z\d w\d s\nonumber\\
&=\infty,
\end{align}
when $\kappa$ is large.
\end{proof}

\section{Proof of Theorems \ref{energy-white}, \ref{energy-colored} and \ref{Dirichlet}}\label{sect-5}
The following proposition is crucial in the proof of Theorem \ref{energy-white}.
\begin{proposition}\label{prop5.1}
Under Assumption \ref{initial}, there exist  $t^\ast,\,K>0$ such that for all $t\geq t^\ast$, we have
\begin{equation*}
\inf_{x\in B(0,\,1)}\E|u_t(x)|^2=\infty,\quad\text{for}\quad K_{u_0}>K.
\end{equation*}
\end{proposition}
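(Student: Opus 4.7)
The plan is to imitate the proof of Theorem \ref{theo1}, replacing the pointwise lower bound $u_0 \ge \kappa$ (which is not available here) by the averaged lower bound $K_{u_0}>0$ from Assumption \ref{initial}. The key device is a time-shift: one evaluates the solution at times of the form $t+t_0$ rather than at $t$, so that Proposition \ref{determ} supplies a uniform lower bound on the deterministic piece $(\cG u)_{t+t_0}(x)$.

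First I would apply the Walsh isometry to the mild formulation \eqref{mild} with $b\equiv0$ to get, for $t\in(0,t_0]$ and $x\in B(0,1)$,
\begin{equation*}
\E|u_{t+t_0}(x)|^2 = |(\cG u)_{t+t_0}(x)|^2+\int_0^{t+t_0}\!\!\int_{\R^d}G^2_{t+t_0-s}(x-y)\,\E|\sigma(u_s(y))|^2\,\d y\,\d s.
\end{equation*}
Proposition \ref{determ} bounds the first term below by $c^2K_{u_0}^2$. For the stochastic integral I would throw away everything outside $s\in[t_0,t+t_0]$ and $y\in B(0,1)$, substitute $s\mapsto s-t_0$, invoke the growth hypothesis \eqref{growth} on $\sigma$ together with Jensen's inequality, and obtain
\begin{equation*}
\E|u_{t+t_0}(x)|^2\ge c^2K_{u_0}^2+\int_0^t\Bigl(\inf_{y\in B(0,1)}\E|u_{s+t_0}(y)|^2\Bigr)^{1+\gamma}\!\!\int_{B(0,1)}G^2_{t-s}(x-y)\,\d y\,\d s.
\end{equation*}

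Next I would establish the kernel lower bound
\begin{equation*}
\int_{B(0,1)}G^2_{t-s}(x-y)\,\d y\ge c(t-s)^{-\beta d/\alpha},\qquad x\in B(0,1),
\end{equation*}
valid for $t-s$ sufficiently small. This is done exactly as in Proposition \ref{prop-kernel}: for $|x-y|\le(t-s)^{\beta/\alpha}$ the lower bound \eqref{heat} gives $G_{t-s}(x-y)\ge c(t-s)^{-\beta d/\alpha}$, and the intersection of the ball $\{|x-y|\le(t-s)^{\beta/\alpha}\}$ with $B(0,1)$ has volume comparable to $(t-s)^{\beta d/\alpha}$ as soon as $(t-s)^{\beta/\alpha}$ is small enough (which is ensured by further shrinking $t_0$ if necessary). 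Setting $H(t):=\inf_{x\in B(0,1)}\E|u_{t+t_0}(x)|^2$, the previous two displays combine to give the nonlinear renewal inequality
\begin{equation*}
H(t)\ge c^2K_{u_0}^2+c_1\int_0^t\frac{H(s)^{1+\gamma}}{(t-s)^{\beta d/\alpha}}\,\d s,\qquad t\in(0,t_0].
\end{equation*}

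To conclude I would apply Proposition \ref{volterra}: once $K_{u_0}$ exceeds an explicit threshold $K$, the constant $c^2K_{u_0}^2$ is large enough that $H$ blows up at some $t^{\ast\ast}\in(0,t_0]$, so $\inf_{x\in B(0,1)}\E|u_t(x)|^2=\infty$ for all $t\ge t^\ast:=t^{\ast\ast}+t_0$. The main technical obstacle is the kernel bound in the middle step: one must check that boundary effects (points $x$ close to $\partial B(0,1)$) do not destroy the $(t-s)^{-\beta d/\alpha}$ lower bound. This is handled by taking $t_0$ small enough that the relevant ball $B(x,(t-s)^{\beta/\alpha})$ occupies at least a fixed fraction of $B(0,1)\cap B(x,(t-s)^{\beta/\alpha})$, which follows the pattern already used in Proposition \ref{prop-kernel}.
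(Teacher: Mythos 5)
Your proposal is correct and follows essentially the same route as the paper's proof of Proposition \ref{prop5.1}: the time-shift to $t+t_0$ with Proposition \ref{determ} controlling $|(\cG u)_{t+t_0}(x)|^2\geq cK_{u_0}^2$, the substitution $S=s-t_0$ in the stochastic term, the small-ball lower bound $\int_{B(0,1)}G_{t-S}^2(x-y)\,\d y\geq c(t-S)^{-\beta d/\alpha}$ via \eqref{heat}, and the conclusion from the renewal inequality through Proposition \ref{volterra}. The only difference is cosmetic: you spell out the boundary-volume check for $x$ near $\partial B(0,1)$, which the paper handles implicitly by restricting to $t<1$ and the inclusion $\{y\in B(0,1):|x-y|<t^{\beta/\alpha}\}\subset\{y\in B(0,1):|x-y|<1\}$.
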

\begin{proof}
By Walsh isometry, we have
\begin{align*}
\E|u_t(x)|^2&=|(\cG u)_t(x)|^2+\int_0^t\int_{\R^d} G_{t-s}^2(x-y)\E|\sigma(u_s(y))|^2\,\d y\,\d s.
\end{align*}
We can always assume that $t^\ast$ to be large. Otherwise, there is nothing to prove. So instead of looking at time $t$, we will look at $t+t_0$ and fix $t_0>0$ later. We have
\begin{align*}
\E|u_{t+t_0}(x)|^2&=|(\cG u)_{t+t_0}(x)|^2+\int_0^{t+t_0}\int_{\R^d} G_{t+t_0-s}^2(x-y)\E|\sigma(u_s(y))|^2\,\d y\,\d s\\
&\geq |(\cG u)_{t+t_0}(x)|^2+\int_{t_0}^{t+t_0}\int_{\R^d} G_{t+t_0-s}^2(x-y)\E|\sigma(u_s(y))|^2\,\d y\,\d s
\end{align*}
By substituting $S = s-t_0$ in the second part, we obtain
\begin{align*}
\E|u_{t+t_0}(x)|^2&\geq |(\cG u)_{t+t_0}(x)|^2+\int_0^t\int_{\R^d} G_{t-S}^2(x-y)\E|\sigma(u_{S+t_0}(y))|^2\,\d y\,\d S\\
&:= I_1+I_2.
\end{align*}
We will assume that $t<1$ for most of the rest of the proof. We  find a lower bound  on $I_1$ first. Let $x\in B(0,\,1)$, then we fix $t_0$ as in Proposition \ref{determ}. This gives us
\begin{align*}
I_1&\geq cK_{u_0}^2,
\end{align*}
where the constant $c$ depends on $t_0$. We now look at the second term:
\begin{align*}
I_2&\geq \int_0^t\left(\inf_{y\in B(0,\,1)}\E|u_{S+t_0}(y)|^2\right)^{1+\gamma}\int_{B(0,\,1)}G_{t-S}^2(x-y)\,\d y\,\d S\\
&\geq c_1\int_0^t\left(\inf_{y\in B(0,\,1)}\E|u_{S+t_0}(x)|^2\right)^{1+\gamma}\frac{1}{(t-S)^{d\beta/\alpha}}\,\d S
\end{align*}
For the last inequality, we used the fact that $t<1$, the fact that  $\{y\in B(0,1): \ |x-y|<t^{\beta/\alpha}\}\subset \{y\in B(0,1): \ |x-y|<1\}$, and  the inequality \eqref{heat}.

Letting $R(S):=\inf_{x\in B(0,\,1)}\E|u_{S+t_0}(x)|^2$, we obtain
\begin{equation*}
R(t)\geq cK_{u_0}^2+c_1\int_0^t\frac{R(S)^{1+\gamma}}{(t-S)^{d\beta/\alpha}}\,\d S\quad\text{for}\quad t\leq1.
\end{equation*}
Now by Proposition \ref{volterra} we have the desired result.
\end{proof}
\begin{proof}[\bf Proof of Theorem \ref{energy-white}]
Let $t>t^\ast$ where $t^\ast$ is as given in the above proposition. The proof of the theorem now follows from Walsh isometry, Jensen's inequality and Proposition \ref{prop5.1}
\begin{align*}
\E|u_t(x)|^2&\geq |(\cG u)_t(x)|^2+\int_0^t\int_{\R^d} G_{t-s}^2(x-y)\left(\E|u_s(y))|^2\right)^{1+\gamma}\,\d y\,\d s\\
&\geq |(\cG u)_t(x)|^2+\int_{t^\ast}^t\int_{B(0,\,1)} G_{t-s}^2(x-y)\left(\E|u_s(y))|^2\right)^{1+\gamma}\,\d y\,\d s\\
&=\infty.
\end{align*}
This follows since the first term of the above display is strictly positive for any $x\in {\R^d}$.
\end{proof}

\begin{proposition}\label{prop5.2}
Suppose that Assumptions \ref{color} and \ref{initial} hold. Let $u_t$ be the solution to \eqref{eq:colored}. Then, there exists a $t^\ast>0$ such that for all $t\geq t^\ast$, we have
\begin{equation*}
\inf_{x,\,y\in B(0,\,1)}\E|u_t(x)u_t(y)|=\infty,\quad\text{whenever}\quad K_{u_0}>K,
\end{equation*}
 where $K$ is some positive constant.
\end{proposition}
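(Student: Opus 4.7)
The plan is to adapt the strategy of Proposition \ref{prop5.1} to the colored noise setting, but working with the two-point quantity $\E|u_t(x) u_t(y)|$ rather than the second moment; this is necessary because the colored noise isometry naturally produces a kernel integrated against $f(z,w)$ in two spatial variables. As in the proof of Proposition \ref{prop5.1}, I would replace $t$ by $t + t_0$ for some small $t_0 > 0$ to be fixed via Proposition \ref{determ}, so that the deterministic initial-condition contribution can be bounded from below by $c K_{u_0}$ uniformly on $B(0,1)$.

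Starting from the mild formulation \eqref{mild}, squaring and taking expectation (via the colored noise analogue of Walsh's isometry and pulling the cross product into the integrand), I obtain
\begin{align*}
\E[u_{t+t_0}(x) u_{t+t_0}(y)] &= (\cG u)_{t+t_0}(x)(\cG u)_{t+t_0}(y) \\
&\quad + \int_0^{t+t_0}\!\!\int_{\R^d\times\R^d}\! G_{t+t_0-s}(x-z) G_{t+t_0-s}(y-w) f(z,w)\, \E[\sigma(u_s(z))\sigma(u_s(w))] \,\d z\,\d w\,\d s.
\end{align*}
The first term is bounded below by $c K_{u_0}^2$ for $x,y \in B(0,1)$ by Proposition \ref{determ}. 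For the stochastic piece I would restrict the time integral to $[t_0,\, t+t_0]$, change variables $S = s - t_0$, restrict the spatial integration to $B(0,1)\times B(0,1)$, and apply the growth condition $\sigma(x) \geq |x|^{1+\gamma}$ together with Jensen's inequality to replace $\E[\sigma(u_{S+t_0}(z))\sigma(u_{S+t_0}(w))]$ by $(\E|u_{S+t_0}(z) u_{S+t_0}(w)|)^{1+\gamma}$. For $t \leq (1/2)^\alpha$, Proposition \ref{prop-kernel} (applied with $R=1$) then yields a lower bound of the form
\begin{equation*}
c_1 K_f \int_0^t \Bigl(\inf_{z,w\in B(0,1)} \E|u_{S+t_0}(z) u_{S+t_0}(w)|\Bigr)^{1+\gamma} \,\d S.
\end{equation*}

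Setting $Q(S) := \inf_{z,w\in B(0,1)} \E|u_{S+t_0}(z) u_{S+t_0}(w)|$ and taking the infimum over $x,y\in B(0,1)$ on the left, the two bounds combine into the nonlinear renewal inequality
\begin{equation*}
Q(t) \geq c K_{u_0}^2 + c_1 K_f \int_0^t Q(S)^{1+\gamma} \,\d S \qquad \text{for } t \leq (1/2)^\alpha.
\end{equation*}
Proposition \ref{volterra} (noting the kernel $(t-S)^{-d\beta/\alpha}$ is replaced by the constant $1$, which is allowed since we restrict to $t \leq (1/2)^\alpha$) now gives blow-up of $Q$ before time $(1/2)^\alpha$ provided $K_{u_0}^2$ exceeds a constant depending on $t_0$, $c_1$, $K_f$ and $\gamma$.

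The only real subtlety is coordinating the two size constraints: $t_0$ must be small enough that Propositions \ref{determ} and \ref{prop-kernel} both apply on the intended window, and $K_{u_0}$ must then be taken large enough (depending on this fixed $t_0$) to force the blow-up of the renewal inequality within that window. Since Proposition \ref{volterra} allows the blow-up time to be made arbitrarily small by enlarging the constant term, this is routine, and yields the proposition with $t^\ast = t_0 + (1/2)^\alpha$ (say) and a sufficiently large threshold $K$ for $K_{u_0}$.
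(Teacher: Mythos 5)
Your proposal matches the paper's own proof essentially step for step: the same time shift $t\mapsto t+t_0$ with $t_0$ fixed via Proposition \ref{determ} to get the lower bound $cK_{u_0}^2$ on the deterministic term, the same restriction of the stochastic term to $[t_0,t+t_0]$ and $B(0,1)\times B(0,1)$ with the growth condition, Jensen's inequality, and Proposition \ref{prop-kernel} (the paper routes this through Proposition \ref{prop-colored}, which is the same computation), leading to the identical renewal inequality $Q(t)\geq cK_{u_0}^2+c_1K_f\int_0^t Q(s)^{1+\gamma}\,\d s$ resolved by Proposition \ref{volterra} for $K_{u_0}$ large. Your closing remark on coordinating the smallness of $t_0$ with the largeness of $K_{u_0}$ is exactly the point the paper compresses into ``valid for a suitable range of $t$,'' so there is nothing to add.
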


\begin{proof}
We can always assume that $t^\ast$ to be large like in proof of Proposition \ref{prop5.1}. So instead of looking at time $t$, we will look at $t+t_0$ and fix $t_0>0$ later. From the mild formulation and appropriate change of variables as in Proposition \ref{prop5.1}, we obtain
\begin{align*}
\E&|u_{t+t_0}(x)u_{t+t_0}(y)|\\
&\geq (\cG u)_{t+t_0}(x)(\cG u)_{t+t_0}(y)\\
&+\int_0^{t+t_0}\int_{\R^d\times\R^d}G_{t+t_0-s}(x-z)G_{t+t_0-s}(y-w)f(z-w)\E|\sigma(u_s(z))\sigma(u_s(w))|\,\d z \d w \d s\\
&\geq (\cG u)_{t+t_0}(x)(\cG u)_{t+t_0}(y)\\
&+\int_0^{t}\int_{B(0,1)\times B(0,1)}G_{t-s}(x-z)G_{t-s}(y-w)f(z-w)\E|u_{s+t_0}(z)u_{s+t_0}(w)|^{1+\gamma}\,\d z \d w \d s.
\end{align*}
The proof essentially follows the same idea as in Proposition \ref{prop5.1}. The key idea is to take $t_0$ as in Proposition \ref{determ} and set
\begin{equation*}
G(s):=\inf_{x,\,y\in B(0,\,1)}\E|u_{s+t_0}(x)u_{s+t_0}(y)|.
\end{equation*}
By following the ideas in Proposition \ref{prop-colored}, we get
\begin{align*}
G(t)\geq cK_{u_0}^2+c_1K_f\int_0^tG(s)^{1+\gamma}\d s,
\end{align*}
valid for a suitable range of $t$. Now we have the desired result using Proposition \ref{volterra}.
\end{proof}
\begin{proof}[\bf Proof of Theorem \ref{energy-colored}]
With the above Proposition, the proof of theorem is now very similar to that of Theorem \ref{energy-white}. Again by Walsh isometry, we have
\begin{align*}
\E&|u_{t}(x)|^2\\
&\geq |\cG u)_{t}(x)|^2\\
&+\int_{t^\ast}^{t}\int_{B(0,1)\times B(0,1)}G_{t-s}(x-z)G_{t-s}(y-w)f(z-w)(\E|u_s(z)u_s(w)|)^{1+\gamma}\,\d z \d w \d s.
\end{align*}
Proposition \ref{prop5.2} completes the proof since the first term of the above inequality is strictly positive for any $x\in\R^d$.
\end{proof}

To prove  Theorem \ref{Dirichlet} we will follow a similar pattern of the proof of the previous results. We emphasize that in the case of \eqref{eq:dir}, the mild solution in the sense of Walsh \cite{walsh} is given by
\begin{equation}
u_t(x)=
(\cG_B u)_t(x)+ \int_{\R^d}\int_0^t G_{B}(t-s, x-y)\sigma(u_s(y))F(\d s\,\d y).
\end{equation}
\begin{proof}[\bf Proof of Theorem \ref{Dirichlet}]
Before giving the proof of our theorem we need the following result.
\begin{align*}
\E|&u_t(x)u_t(y)|\\
&\geq(\cG_B u)_t(x)(\cG_B u)_t(y)\\
&+\int_0^t\int_{B(0,\,R)\times B(0,\,R)}G_{B}(t-s,x-z)G_{B}(t-s, y-w)f(z-w)\left(\E|u_s(z))u_s(w)|\right)^{1+\gamma}\,\d z\,\d w\,\d s\\
&:=I_1+I_2.
\end{align*}
We look at $I_1$ first. By Proposition \ref{determ-dirich}, if $x,\,y \in B(0,\,R/2)$ and $t$ is small enough, we have $I_1\geq c_1\kappa^2$. We now turn our attention to the second term.
\begin{align*}
I_2&\geq \int_0^t\left(\inf_{x,\,y\in B(0,\,R/2)}\E|u_s(x)u_s(y)| \right)^{1+\gamma}\\
&\times \int_{B(0,\,R/2)\times B(0,\,R/2)}G_{B}(t-s,x-z)G_{B}(t-s,y-w)f(z-w)\,\d z\,\d w\,\d s.
\end{align*}
Fix $\epsilon=R/4$  and Proposition \ref{prop-kernel-killed} with $t\leq \big(\frac{R}{4}\big)^\alpha$ to obtain
\begin{align*}
\int_{B(0,\,R/2)\times B(0,\,R/2)}G_{B}&(t-s,x-z)G_{B}(t-s,y-w)f(z-w)\,\d z\,\d w\\
&\geq c_1K_f.
\end{align*}
We then have
\begin{align*}
I_2&\geq c_2K_f\int_0^t\left(\inf_{x,\,y\in B(0,\,R/2)}\E|u_s(x)u_s(y)| \right)^{1+\gamma}\,\d s.
\end{align*}
If we let
\begin{equation*}
H(s):=\inf_{x,\,y\in B(0,\,R/2)}\E|u_s(x)u_s(y)|,
\end{equation*}
then we get
\begin{align*}
H(t)\geq c_1\kappa^2+c_2K_f\int_0^t H(s)^{1+\gamma}\,\d s.
\end{align*}
By comparison principle, it is enough to consider
$$\frac{H'(t)}{H(t)^{1+\gamma}} = c_2K_f,$$
with initial condition $c_1\kappa ^2.$ Hence the blowup occurs at $t = \frac{1}{(c_1\kappa^2)^\gamma\gamma c_2K_f}$. Fix any $t_0<\left(\frac{R}{2}\right)^\alpha$ and take $\kappa_0>\frac{1}{c_1^{0.5}(\gamma c_2K_ft_0)^{1/2\gamma}}$ such that for $\kappa>\kappa_0$, $H(s)=\infty$ for all $s\geq t_0$. Using the above result we can easily prove our result. Observe that
\begin{align*}
\E|&u_t(x)|^2\\
&\geq |(\cG_B u)_t(x)|^2\\
&+\int_{t_0}^t\int_{\R^d\times\R^d}G_{B}(t-s, x-z)G_{B}(t-s, x-w)f(z-w)\left(\E|(u_s(z))(u_s(y))|\right)^{1+\gamma}\,\d z\,\d w\,\d s\\
&=\infty.
\end{align*}
This is true since all the relevant terms involved in the above inequality are positive.
\end{proof}

\section{Proofs of results for reaction-diffusion type equations; Theorems \ref{thm-Dirichlet-additional-drift} and \ref{thm-additional-drift} } \label{sect-drift}

\begin{proof}[\bf Proof of Theorem \ref{thm-Dirichlet-additional-drift}]

Let $B:=B(0,R)$. Suppose by contradiction that  there is a random field solution of equation \eqref{eq:dir-additional-source}. This means that
$$
\sup_{t\geq 0}\sup_{x\in B}\E |u_t(x)|^2<\infty.
$$
Then since $\E |u_t(x)|\leq (\E |u_t(x)|^2)^{1/2}$ we have

$$
\sup_{t\geq 0}\sup_{x\in B}\E |u_t(x)|<\infty.
$$
The eigenfunctions $\{\phi_n: n\in \mathbb{N}\}$ of fractional Laplacian $-(-\Delta)^{\alpha/2}$ in $B$  form  an orthonormal basis for $L^2(B)$.
It is well-known that the first eigenfunction $\phi_1(x)>0$ for all $x\in B$.
Now we also have
\begin{equation}\label{eq:-finite-l1-norm}
\begin{split}
\sup_{t\geq 0}Q(t)&:= \sup_{t\geq 0}\int_B \E[|u_t(x)|]\phi_1(x)\d x\leq  \sup_{t\geq 0}\sup_{x\in B}\E |u_t(x)|\int_B \phi_1(x)\d x<\infty.
\end{split}
\end{equation}

Next, we start with taking expectation of  both sides of equation \eqref{eq:dir-mild-additional-drift} to get
\begin{equation}\label{expectation:dir-mild-additional-drift}
\E[u_t(x)]=
(\cG_B u_0)_t(x)+ \int_{B(0,R)}\int_0^t \E[b(u_s(y))]G_B({t-s},x,y)\d s\,\d y.
\end{equation}
Multiply both sides  of \eqref{expectation:dir-mild-additional-drift} by the first eigenfunction $\phi_1(x)$ of $-(-\Delta)^{\alpha/2} $  on the ball $B$ and integrating over  {\color{blue}$B$} we get
\begin{equation}\label{eq:eigen-integrated}
\begin{split}
Q(t)&:=\int_B \E[|u_t(x)|]\phi_1(x)\d x\\
&\geq
\int_{B}(\cG_B u_0)_t(x)\phi_1(x)\d x+ \int_{B}\bigg[\int_{B}\int_0^t \E[b(u_s(y))]G_B({t-s}, x-y)\d s\,\d y\bigg]\phi_1(x)\d x\\
&\geq \int_{B}(\cG_B u_0)_t(x)\phi_1(x)\d x+ \int_{B}\bigg[\int_{B}\int_0^t \E[|u_s(y)|^{1+\eta}]G_B({t-s}, x-y)\d s\,\d y\bigg]\phi_1(x)\d x\\
&\geq \int_{B}(\cG_B u_0)_t(x)\phi_1(x)\d x+ \int_{B}\bigg[\int_{B}\int_0^t \bigg(\E[|u_s(y)|\bigg)^{1+\eta}G_B({t-s}, x-y)\d s\,\d y\bigg]\phi_1(x)\d x,\\
&=I_1+I_2.
\end{split}
\end{equation}
where the last inequality follows from Jensen's inequality.

 We only give the proof in one of the cases below for the convenience of the reader. For other cases, see Asogwa et al. \cite{asogwa-foondun-jebessa-nane-2017}

The eigenfunctions $\{\phi_n: n\in \mathbb{N}\}$ of fractional Laplacian $-(-\Delta)^{\alpha/2}$ in $B $  form  an orthonormal basis for $L^2(B)$.
We have an eigenfunction expansion of the kernel
\begin{equation}
G_B({t}, x,y)=\sum_{n=1}E_\beta(-\mu_nt^\beta)\phi_n(x)\phi_n(y).
\end{equation}
See, for example, Chen et al.    \cite{cmn-12} and  Meerschaert et al. \cite{mnv-09}.
From this equation we can easily get
\begin{equation}\label{eq:-phi-inner-product-kernel}
\int_B  G_B({t}, x,y)\phi_1(x)\d x=  E_\beta(-\mu_1t^\beta)  \phi_1(y)
\end{equation}
It is a well-know fact that $\phi_1(x)>0$ for $x\in B$.
Now consider first $I_1$, since $u_0(x)\geq \kappa$ by assumption we obtain

\begin{equation}
\begin{split}
I_1&\geq \kappa \int_{B}\int_B  G_B({t}, x,y)\phi_1(x)\d y \d x=\kappa \int_{B}E_\beta(-\mu_1t^\beta)  \phi_1(y)\d y \\
&=\kappa E_\beta(-\mu_1t^\beta) \int_{B}  \phi_1(y)dy=C_{\kappa, \phi_1} E_\beta(-\mu_1t^\beta)
\end{split}
\end{equation}
Next we estimate  $I_2$.
By Fubini theorem and equation \eqref{eq:-phi-inner-product-kernel}
\begin{equation}
\begin{split}
I_2&=\int_0^t\int_{B}E_\beta(-\mu_1(t-s)^\beta)  \phi_1(y)\E[|u_s(y)|^{1+\eta}] \d y\,\d s
\end{split}
\end{equation}

Applying the  Jensen's inequality twice using the fact that $0<A:=\int_B \phi_1 \d x<\infty$, and by using the fact that the Mittag-leffler function is a decreasing function,
we get

\begin{equation}
\begin{split}
I_2
&\geq \int_0^t E_\beta(-\mu_1(t-s)^\beta)A\bigg[\int_{B}  \E[|u_s(y)|]  \frac{\phi_1(y)}{A}\d y\bigg] ^{1+\eta}\,\d s\\
&=A^{-\eta}\int_0^t E_\beta(-\mu_1(t-s)^\beta)\bigg[\int_{B} \E[|u_s(y)|]  \phi_1(y)\d y\bigg] ^{1+\eta}\,\d s\\
&\geq A^{-\eta}E_\beta(-\mu_1t^\beta)\int_0^t \bigg[\int_{B}  \E[|u_s(y)|]  \phi_1(y)\d y\bigg] ^{1+\eta}\,\d s
\end{split}
\end{equation}

{ We have  the uniform estimate of Mittag-Leffler function in  \cite[Theorem 4]{simon}
\begin{equation}\label{eq:ML-bounds}
\frac{1}{1+\Gamma(1-\beta)t}\leq E_\beta(-t)\leq \frac{1}{1 + \Gamma(1 + \beta)^{-1}t}\ \text{for any}\ t>0.
\end{equation}
}

Using equation \eqref{eq:ML-bounds} we get
\begin{equation}\label{eq:dir-renewal0}
\begin{split}
Q(t)&=\int_B \E[|u_t(x)|]\phi_1(x)dx\\
&\geq C_1 \frac{1}{1+\mu_1\Gamma(1-\beta)t^{\beta}} +C_2  \frac{1}{1+\mu_1\Gamma(1-\beta)t^{\beta}} \int_0^t Q(s) ^{1+\eta}\,\d s
\end{split}
\end{equation}
Hence for $t\geq 1$  we get
\begin{equation}\label{eq:dir-renewal}
\begin{split}
Q(t)&=\int_B \E[|u_t(x)|]\phi_1(x)dx\\
&\geq C_3 t^{-\beta}+C_4 t^{-\beta} \int_1^t Q(s) ^{1+\eta}\,\d s
\end{split}
\end{equation}

Set $P(t)=t^\beta Q(t)$ and multiply both sides of equation \eqref{eq:dir-renewal} by $t^\beta$ to get
\begin{equation}\label{eq:dir-renewal-final}
\begin{split}
P(t)&=t^\beta\int_B \E[|u_t(x)|]\phi_1(x)dx\\
&\geq C_3+C_4 \int_1^t \frac{(s^\beta Q(s)) ^{1+\eta}}{s^{\beta(1+\eta)}}\,\d s\\
&=C_3+C_4 \int_1^t \frac{P(s) ^{1+\eta}}{s^{\beta(1+\eta)}}\,\d s
\end{split}
\end{equation}
 Now we have three cases:
When $\beta(1+\eta)<1$, $\beta(1+\eta)>1$ and $\beta(1+\eta)=1$. We only give the proof in the first case $\beta(1+\eta)<1$. In this case it is enough to  consider the following equation
$$
\frac{P'(t)}{P^{1+\eta}(t)}=\frac{C_4}{t^{\beta(1+\eta)}}, \ \ t>1\ \ \mathrm{and }\ \ P(1)=C_3.
$$
This equation has a solution that satisfies
\begin{equation}\label{P-power-eqn}
P^{-\eta}(t)=C_3^{-\eta}-\frac{\eta C_4}{1-\beta(1+\eta)}\bigg(t^{1-\beta(1+\eta)}-1\bigg)
\end{equation}
this blows up at $t=t_0>1$ that makes the right hand side of the last equation zero;
$$
t_0^{1-\beta(1+\eta)}=1+ \frac{C_3^{-\eta}}{\eta C_4}(1-\beta(1+\eta))
$$
Since the solution $P(t)$ is a non-decreasing function, $P(t)=\infty$ for all $t\geq t_0$.

The other cases are handled similarly.

Hence by Theorem \ref{thm-Dirichlet-additional-drift-pde}  $V(t,x)= \E[|u_t(x)|]$  blows up in finite time. This is a contradiction to inequality \eqref{eq:-finite-l1-norm}.

\end{proof}

{

\begin{proof}[\bf Proof of Theorem \ref{thm-additional-drift}]
Now suppose by contradiction that  there is a random field solution of equation \eqref{eq:additional source}. This means that
$$
\sup_{t\geq 0}\sup_{x\in \R^d}\E |u_t(x)|^2 <\infty.
$$
Then since $\E |u_t(x)|\leq (\E |u_t(x)|^2)^{1/2}$ we have

\begin{equation}\label{first-moment-bound}
\sup_{t\geq 0}\sup_{x\in B(0,R)}\E |u_t(x)|<\infty.
\end{equation}
Now  we start with taking expectation of the both sides of equation \eqref{mild-additional-drift} to get
\begin{equation}\label{expectation:mild-additional-drift}
\E[u_t(x)]=
\int_{\R^d} G_t(x-y) u_0(y)dy+ \int_{\R^d}\int_0^t \E[b(u_s(y))]G({t-s},x,y)\d s\,\d y.
\end{equation}
Hence by the Jensen's inequality we get
\begin{equation}\label{expectation:mild-additional-drift-deterministic}
\begin{split}
\E[|u_t(x)|]&\geq
(G_t* u_0)(x)+ \int_{\R^d}\int_0^t [\E
|u_s(y)|]^{1+\eta}G({t-s},x,y)\d s\,\d y\\
&=I+II.
\end{split}\end{equation}
We give the proof when $\inf _{x\in \R^d}u_0(x)\geq \kappa$: in this case $I\geq \kappa.$ Since  $G({t-s},x,y)$ is a probability density function on $\R^d$ we get
\begin{equation}
II\geq \int_0^t F(s)^{1+\eta}\d s
\end{equation}
where $F(s)=\inf_{y\in \R^d}\E
|u_s(y)|$. Hence we obtain
$$
F(t)\geq \kappa+ \int_0^t F(s)^{1+\eta}\d s
$$

Now if $\kappa >0$, then blow up happens at some $t_0=\kappa^{-\eta}/
\eta$ for any $\eta>0$. Hence $V(t,x)= \E[|u_t(x)|]$ blows up in finite time. Hence we have a contradiction to equation \eqref{first-moment-bound}.

The other case in the theorem is more complicated and it follows from Theorem \ref{thm-additional-drift-pde} by making the following observation: From equation \eqref{expectation:mild-additional-drift}, the function $V(t,x)= \E[|u_t(x)|]$ is a super solution of the following deterministic equation (this follows by using the Fractional Duhamels' principle in the reverse order!)
\begin{equation}\label{intro:det-fractional-drift}
\begin{split}
\partial^\beta_tV(t,x)&=-\nu(-\Delta)^{\alpha/2} V(t,x)+I^{1-\beta}[(V(t,x))^{1+\eta}]\ \ \
 t>0 \quad\text{and}\quad x\in \R^d;\\
 V(0,x)&=u_0(x), \ \ x\in \R^d.
 \end{split}
\end{equation}

By Theorem \ref{thm-additional-drift-pde} $V(t,x)= \E[|u_t(x)|]$ blows up in finite time. Hence we have a contradiction to equation \eqref{first-moment-bound}.

\end{proof}

\section{Appendix}\label{appendix}

 In this section,  we consider the following  space-time fractional  {reaction-diffusion type }equations in $(d+1)$ dimension:
\begin{equation}\label{eq:additional source-pde}
{
\begin{split}
 \partial^\beta_tV(t, x)&=
 -\nu(-\Delta)^{\alpha/2} V(t, x)+I^{1-\beta}[ b(V(t, x))]\quad{t>0}\quad\text{and}\quad x\in \mathbb{R}^d,\\
 V(0, x)&=V_0(x) \ \ x\in \mathbb{R}^d.
 \end{split}
 }
 \end{equation}
 {
 The operator $-(-\Delta)^{\alpha/2} $ denotes the fractional Laplacian, the generator of a $\alpha$-stable L\'evy process.}  The initial condition will always be assumed to be a non-negative bounded deterministic function.  The function $b$ is a  locally Lipschitz function.

 For every given $T>0$, a mild solution to \eqref{eq:additional source-pde} on $(0, T)$  is any $V$ that satisfies the following evolution equation--this is also called the mild/integral solution of equation \eqref{eq:additional source-pde}-- which follows by the fractional Duhamel's principle \cite{umarov-12}
{
\begin{equation}\label{mild}
V(t, x)=
\int_{\R^d} G_t(x-y)V_0(y)\,\d y + \int_{\R^d}\int_0^t G_{t-s}(x-y)b(V(s, y))\d s\,\d y,
\end{equation}
}
for $0<t<T$ where
$G_t(\cdot)$ denotes the density of the time changed process $X_{E_t}$.


\begin{theorem}[Theorem 1.1 in Asogwa et al. \cite{asogwa-foondun-jebessa-nane-2017}]\label{thm-additional-drift-pde}
Suppose that $0<\eta\leq \alpha/\beta d$ and $V_0\not\equiv 0$, then there is no global solution to \eqref{eq:additional source-pde} in the sense that there exists a $t_0>0$ such that $V(t,\,x)=\infty$ for all $t>t_0$ and $x\in \R^d$.
\end{theorem}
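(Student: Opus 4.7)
The plan is to combine the heat kernel lower bounds from Section \ref{sect-prelim} with a bootstrap iteration, ending in a nonlinear Volterra inequality of the type covered by Proposition \ref{rem-volterra-general}. To start, since $V_0$ is nonnegative and not identically zero, after translating the origin I may assume there is a ball $B(0,r)$ with $V_0 \geq \kappa \ind_{B(0,r)}$ for some $\kappa>0$. For $t$ large enough and $|x|\leq c t^{\beta/\alpha}$, the estimate \eqref{heat} gives $G_t(x-y)\geq c_1 t^{-\beta d/\alpha}$ uniformly for $y\in B(0,r)$; plugging this into the linear part of the mild formulation \eqref{mild} and discarding the nonnegative nonlinear term produces the seed bound
\begin{equation*}
V(t,x)\geq C_0\, t^{-\beta d/\alpha},\qquad |x|\leq c t^{\beta/\alpha},\ t\geq t_\ast.
\end{equation*}

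Next I would feed this seed back into the nonlinear part of \eqref{mild}. Restricting the space integral to $\{|y|\leq c s^{\beta/\alpha}\}$ and the time integral to $s\in(t_\ast, t/2)$, and applying \eqref{heat} once more to $G_{t-s}(x-y)$, a direct computation yields an improved bound $V(t,x)\geq C_1 t^{a_1}$ with $a_1=(1+\eta)a_0+1$ and $a_0=-\beta d/\alpha$, valid on a slightly shrunk space-time region. Iterating produces bounds $V(t,x)\geq C_k t^{a_k}$, where the exponents satisfy the affine recursion $a_{k+1}=(1+\eta)a_k+1$ with fixed point $a_\star=-1/\eta$. The hypothesis $\eta\leq\alpha/(\beta d)$ is precisely the statement $a_0\geq a_\star$, and in the subcritical range $\eta<\alpha/(\beta d)$ the exponents $a_k$ tend to infinity geometrically, so after finitely many iterations the power-law lower bound on $V$ becomes arbitrarily large at any prescribed time $\tau$.

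With $V(\tau,\cdot)$ then pointwise very large on a ball, I would restart the equation at $\tau$ and carry out one more Jensen-against-the-kernel step to derive an inequality
\begin{equation*}
H(t)\geq A+B\int_0^t (t-s)^{-\theta} H(s)^{1+\eta}\,\d s,
\end{equation*}
for the spatial minimum $H(t):=\inf_{|x|\leq c\tau^{\beta/\alpha}} V(\tau+t,x)$, with $A$ as large as needed and some $\theta\in(0,1)$ arising from the heat kernel estimate. Proposition \ref{rem-volterra-general} then forces $H$, and hence $V$, to blow up in finite time, which contradicts the assumed existence of a global solution.

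The hard part is the critical borderline $\eta=\alpha/(\beta d)$, where the bootstrap recursion sits exactly at the fixed point $a_0=a_\star$ and never strictly improves the exponent. In that case I would refine the nonlinear bootstrap so that each iterate picks up a logarithmic factor, producing $V(t,x)\geq c(\log t)^k t^{-\beta d/\alpha}$ for every $k$; once $(\log t)^k$ is large enough at some $\tau$ the restart mechanism above still fires. This parallels the Hayakawa borderline analysis of the classical Fujita problem and is the main technical obstacle.
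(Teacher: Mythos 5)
First, a point of reference: this paper does not actually prove Theorem \ref{thm-additional-drift-pde} --- it is imported verbatim from Asogwa et al.\ \cite{asogwa-foondun-jebessa-nane-2017} and used as a black box in the proof of Theorem \ref{thm-additional-drift}, so your argument has to stand on its own rather than be matched line-by-line against an internal proof. On its own merits, your subcritical argument ($\eta<\alpha/(\beta d)$) is essentially sound and is the standard Fujita bootstrap: the seed bound follows from the on-diagonal part of \eqref{heat} (two caveats: from $V_0\not\equiv 0$ you may only assume $\int_{B(0,r)}V_0>0$, not $V_0\ge\kappa$ pointwise on a ball, but the integral version feeds into the kernel bound identically; and \eqref{heat} is a stable-kernel estimate for $\alpha\in(0,2)$, so $\alpha=2$ needs the Gaussian analogue). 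Your recursion $a_{k+1}=(1+\eta)a_k+1$ with fixed point $a_\star=-1/\eta$, and the identification of $\eta\le\alpha/(\beta d)$ with $a_0\ge a_\star$, are correct; once some $a_k>0$ the restart step closes the argument, although for the restart you should note that $\theta=\beta d/\alpha$ need not lie in $(0,1)$ --- the clean fix is to apply Jensen over the moving ball $\{|x-y|\le(t-s)^{\beta/\alpha}\}$, which caps the kernel at a constant on a bounded time window and reduces matters to the ODE comparison of Proposition \ref{volterra}. The final upgrade to $V(t,x)=\infty$ for \emph{every} $x\in\R^d$, which the statement requires, follows from one more pass through the mild formulation using strict positivity of $G$; you should say this, but it is routine.

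The genuine gap is the critical case $\eta=\alpha/(\beta d)$, which the statement includes and which you leave as an acknowledged obstacle. Your proposed fix does not work as described: the log-gain iteration indeed yields $V(t,x)\ge C_k(\log t)^{j_k}\,t^{-\beta d/\alpha}$ with $j_{k+1}=(1+\eta)j_k+1$, but for every \emph{fixed} $k$ one has $(\log t)^{j_k}t^{-\beta d/\alpha}\to 0$ as $t\to\infty$, so the assertion that ``once $(\log t)^k$ is large enough at some $\tau$ the restart mechanism fires'' is false at face value --- the seed never becomes large by taking $\tau$ large at fixed $k$. To rescue the scheme one must optimize jointly in $k$ and $\tau$: writing $m=\beta d/\alpha$, the maximum over $\tau$ of $(\log\tau)^{j}\tau^{-m}$ is $\bigl(j/(em)\bigr)^{j}$, attained at $\log\tau= j/m$, which grows superexponentially in $j$; the argument then succeeds only if one verifies that the constants degrade at most exponentially along the iteration (roughly $\log C_{k+1}=(1+\eta)\log C_k-\log j_{k+1}+O(1)$, whence $|\log C_k|=O(j_k)$) and that the validity regions and starting times remain uniformly controlled over the now \emph{infinitely} many iterations --- bookkeeping you neither state nor carry out, and which is precisely where the difficulty of the borderline case lives. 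Absent that (or an alternative device for the critical exponent, such as an eigenfunction or Mitidieri--Pohozaev-type test-function argument adapted to the memory term $I^{1-\beta}$, in the spirit of the cited source), your proposal proves the theorem only in the strict range $\eta<\alpha/(\beta d)$, not as stated.
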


Next result gives conditions for non-existence of global mild solutions in bounded domains.

\begin{theorem}[Theorem 1.4 in Asogwa et al. \cite{asogwa-foondun-jebessa-nane-2017}]\label{thm-Dirichlet-additional-drift-pde}
Suppose  $b $ satisfies the conditions in Assumption \ref{drift-polynomial}.
Fix $R>0$ and consider
{
\begin{equation}\label{eq:dir-additional-source-pde}
\begin{split}
 \partial^\beta_tV(t, x)&=
 -\nu(-\Delta)^{\alpha/2} V(t,x)+I^{1-\beta}[ b(V(t, x))]\quad{t>0}\quad\text{and}\quad x\in B(0,\,R),\\
 V(t, x)&= 0\ \ x\in B(0,\,R)^C\\
 V(0, x)&=V_0(x) \ \ x\in B(0,\,R).
 \end{split}
 \end{equation}
}
Here $-(-\Delta)^{\alpha/2}$ denotes the generator of  $\alpha$-stable L\'evy  process killed upon exiting the ball $B(0,\,R)$.
Suppose that $0<\eta<1/\beta-1$, then there is no global solution to \eqref{eq:dir-additional-source-pde} whenever $K_{V_0, \phi_1}:= \int_{B}V_0(x)\phi_1(x)d x>0$. For any $\eta>0$,  there is no global solution whenever $K_{V_0, \phi_1}>0$ is large enough.
\end{theorem}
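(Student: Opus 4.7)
The plan is to adapt the proof of Theorem \ref{thm-Dirichlet-additional-drift} directly to the deterministic setting, removing the steps that handled the expectation/contradiction arguments but keeping the eigenfunction-projection strategy. Start from the mild formulation
\begin{equation*}
V(t,x) = \int_B G_B(t,x,y)V_0(y)\,dy + \int_0^t\!\int_B G_B(t-s,x,y)\,b(V(s,y))\,dy\,ds,
\end{equation*}
multiply both sides by $\phi_1(x)$ (the first Dirichlet eigenfunction of $-(-\Delta)^{\alpha/2}$ on $B$, with eigenvalue $\mu_1>0$ and $\phi_1>0$ on $B$), and integrate over $B$. Using the spectral expansion $G_B(t,x,y) = \sum_n E_\beta(-\mu_n t^\beta)\phi_n(x)\phi_n(y)$, we obtain the identity $\int_B G_B(t,x,y)\phi_1(x)\,dx = E_\beta(-\mu_1 t^\beta)\phi_1(y)$ used in the companion proof.

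Setting $P(t):=\int_B V(t,x)\phi_1(x)\,dx$, $A:=\int_B\phi_1\,dx$, and using Assumption \ref{drift-polynomial} together with Jensen's inequality (applied to the probability measure $\phi_1\,dy/A$ on $B$), we get
\begin{equation*}
P(t) \geq E_\beta(-\mu_1 t^\beta)\,K_{V_0,\phi_1} + A^{-\eta}\int_0^t E_\beta(-\mu_1(t-s)^\beta)\,P(s)^{1+\eta}\,ds.
\end{equation*}
The two-sided bound $E_\beta(-t)\asymp 1/(1+t)$ from \eqref{eq:ML-bounds} then yields, for $t\geq 1$,
\begin{equation*}
P(t) \geq C_1 K_{V_0,\phi_1}\, t^{-\beta} + C_2\, t^{-\beta}\int_1^t P(s)^{1+\eta}\,ds.
\end{equation*}
After substituting $\widetilde P(t):=t^\beta P(t)$, this becomes the nonlinear Volterra inequality
\begin{equation*}
\widetilde P(t) \geq C_1 K_{V_0,\phi_1} + C_2 \int_1^t \frac{\widetilde P(s)^{1+\eta}}{s^{\beta(1+\eta)}}\,ds.
\end{equation*}

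Now analyze the ODE $\widetilde P'/\widetilde P^{1+\eta} = C_2/s^{\beta(1+\eta)}$ with $\widetilde P(1)=C_1 K_{V_0,\phi_1}$. Integrating from $1$ to $t$,
\begin{equation*}
\widetilde P(t)^{-\eta} = (C_1 K_{V_0,\phi_1})^{-\eta} - \eta C_2 \int_1^t s^{-\beta(1+\eta)}\,ds,
\end{equation*}
so blow-up occurs when the right-hand side reaches $0$. When $\beta(1+\eta)<1$, i.e.\ $\eta<1/\beta - 1$, the integral $\int_1^\infty s^{-\beta(1+\eta)}\,ds=\infty$, so blow-up always happens in finite time provided merely that $K_{V_0,\phi_1}>0$. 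When $\beta(1+\eta)\geq 1$ the integral is either finite or only logarithmically divergent, and one must take $K_{V_0,\phi_1}$ large enough that $(C_1 K_{V_0,\phi_1})^{-\eta}$ is dominated by $\eta C_2\int_1^\infty s^{-\beta(1+\eta)}\,ds$; this gives the second half of the theorem. Since $\widetilde P(t)$ is nondecreasing (as a solution of an integral inequality whose right-hand side is increasing in $t$), finite-time blow-up of $\widetilde P$, hence of $P$, is inherited by $V(t,\cdot)$ in any $L^1_{\phi_1}$ sense, and by positivity of the kernel and $\phi_1$ this forces $V(t,x)=\infty$ pointwise for $t$ past the blow-up time.

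The main obstacle is the borderline case $\beta(1+\eta)=1$ and the precise quantification of the threshold on $K_{V_0,\phi_1}$ when $\beta(1+\eta)>1$: one must verify that the Volterra inequality (not just the associated ODE) forces blow-up via a comparison argument, and that the implicit constants $C_1,C_2$ coming from the Mittag-Leffler lower bound and the Jensen step can be tracked cleanly. These are essentially identical to the corresponding steps already carried out in the proof of Theorem \ref{thm-Dirichlet-additional-drift}, so no genuinely new technical ingredients are required beyond confirming that the deterministic mild solution inherits the same renewal-type inequality.
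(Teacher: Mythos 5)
Your proposal is correct and takes essentially the same route as the paper: the projection against $\phi_1$ via the identity $\int_B G_B(t,x,y)\phi_1(x)\,dx=E_\beta(-\mu_1 t^\beta)\phi_1(y)$, the double Jensen step with the measure $\phi_1\,dy/A$, the Mittag--Leffler bounds \eqref{eq:ML-bounds}, the substitution $\widetilde P(t)=t^\beta P(t)$, and the resulting renewal inequality with kernel $s^{-\beta(1+\eta)}$ are exactly the steps the paper carries out in its proof of Theorem \ref{thm-Dirichlet-additional-drift} (the deterministic statement itself is deferred to Asogwa et al., with only the case $\beta(1+\eta)<1$ written out and the others handled ``similarly''). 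Your case dichotomy $\beta(1+\eta)<1$ versus $\beta(1+\eta)\geq 1$, with the largeness threshold on $K_{V_0,\phi_1}$ in the latter case, matches the intended argument.
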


The mild solution of equation \eqref{eq:dir-additional-source-pde} is given by using the fractional Duhamel's principle \cite{umarov-12} as follows

{
\begin{equation}
\begin{split}
V(t, x)&=
\int_{B(0,R)}G_B(t,x,y)V_0(y)\d y + \int_{B(0,R)}\int_0^t b(V(s, y))G_B({t-s}, x,y)\d s\,\d y,
\end{split}
\end{equation}
}
where $G_B({t}, x,y)$ is the density of $X(E_t)$ killed on the exterior of $B$.

\noindent {\bf Acknowledgements}.{ The authors thank an anonymous  referee
for  reading the paper carefully and for the many comments that improved the paper. E. Nane thanks Le Chen of University of Nevada for hosting him in November, 2018 and the very useful discussions about the blow up results for the equation with  general fractional integral of the noise and pointing out to the results in his joint paper \cite{le-chen-2019}.

}


\end{document}